  \let\phi\varphi
  \let\subset\subseteq
  \def\compEXPTIME{{\textsf{EXPTIME}}}
  \def\compNP{{\textsf{NP}}}
  \def\compP{{\textsf{P}}}
  \def\algA{{\mathbf{A}}}
  \newcommand\vect[1]{\mathbf {#1}}
  \newcommand\textremovalmachine[1]{}
  \newcommand\en{\mathbb{N}}
  \newcommand\seq[1]{\vect{seq}(#1)}
  \DeclareMathOperator\Sg{Sg}
  \DeclareMathOperator\cont{cont}
  \theoremstyle{plain}
  \newtheorem{theorem}{Theorem}
  \newtheorem{lemma}[theorem]{Lemma}
  \newtheorem{claim}[theorem]{Claim}
  \newtheorem{corollary}[theorem]{Corollary}
  \newtheorem{observation}[theorem]{Observation}
  \newtheorem{proposition}[theorem]{Proposition}
  \theoremstyle{definition}
  \newtheorem{definition}[theorem]{Definition}
  \newtheorem{remark}[theorem]{Remark}
\begin{document}

  \author{Alexandr Kazda}
\address{Department of Algebra, Charles University\\
Sokolovsk\'a 83, 186 00 Praha 8, Czech Republic\\
ORCID 0000-0002-7338-037X\\
}
\email{alex.kazda@gmail.com}
\author{Dmitriy Zhuk}
\address{Department of Mathematics and Mechanics, Moscow State University\\
Moscow, Russia\\
}
\email{zhuk.dmitriy@gmail.com}
  \title{Existence of cube terms in finite algebras}
\thanks{The first author was supported by the PRIMUS/SCI/12 and
UNCE/SCI/22 projects of the Charles University and by the the Czech Science Foundation project GA \v CR 18-20123S.
The second author was supported by Russian Foundation for Basic Research  (grant 19-01-00200).
Our thanks go to Ralph Freese who has, with amazing speed, implemented Algorithm~\ref{algBlocker} in UAcalc.}

\keywords{cube term,  cube term blocker, near unanimity, few subpowers, algorithm, idempotent algebra}
\subjclass{08B05, 08A70}

  \begin{abstract}
    We study the problem of whether a given finite algebra with finitely many basic operations
    contains a cube term; we give both structural and algorithmic results. We show that if such an algebra has a
    cube term then it has a cube term of dimension at most $N$, where the number $N$
    depends on the arities of basic operations of the algebra and the size of the
    basic set. For finite idempotent algebras we give a tight bound on $N$ that, in the special case of algebras with more than $\binom{|A|}2$  basic operations, improves
    an earlier result of K. Kearnes and A. Szendrei. On the algorithmic
    side, we show that deciding the existence of cube terms is in \compP{} for idempotent algebras and in \compEXPTIME{} in general. 
    
    Since an algebra contains a $k$-ary near unanimity operation if and only if it
    contains a $k$-dimensional cube term and generates a congruence distributive variety, our algorithm also lets us decide whether a given finite algebra has a near unanimity operation.  
  \end{abstract}
\maketitle

  \section{Introduction}
 Our goal in this paper is to explore the conditions for the existence of a
  cube term in finite a algebra (with finitely many basic
  operations).
  A finite algebra has few subalgebras of powers if and only if it has a cube term
(equivalently, an edge term or a parallelogram term) of some dimension.
  See~\cite{BIMMVW} and~\cite{kearnes-szendrei-parallelogram}
  for an introduction to these terms,
  and~\cite{idzak-markovic-mckenzie-valeriote-willard-tractability} for the
  application to the Constraint Satisfaction Problem.

  How to efficiently decide if a given finite algebra has a cube term? This
  question has practical significance: When coming up with hypotheses about few
  subpowers, one might want to quickly know if a candidate for a counterexample
  actually has few subpowers.  Deciding existence of various operations is also
  of interest in computational universal algebra  (e.g. in the software package
  UAcalc~\cite{UAcalc}).
  
  Conveniently, deciding whether an algebra $\algA$ has a cube term
  goes a long way towards telling us whether $\algA$ also has a near unanimity
  operation as $\algA$ has a near unanimity operation if and only if $\algA$ has a cube term and  $\algA$ generates a congruence distributive variety. 
  
  Since we present an \compEXPTIME{} algorithm for the former property and deciding the latter property is \compEXPTIME{}-complete for general algebras~\cite{freese-valeriote-maltsev-conditions}, we can place the problem of deciding if an algebra has a near unanimity term into the class \compEXPTIME. This is a significant improvement over the previous result~\cite{maroti-nu-is-decidable} that the problem is algorithmically decidable.

Besides the general case, we also look at deciding the existence of cube terms for idempotent algebras. Our
Algorithm~\ref{algBlocker} shows how to do this in polynomial time. This improves a previous algorithm given in~\cite[Corollary~3.6]{markovic-maroti-mckenzie-blockers}. Previously, a polynomial time algorithm was known for deciding cube terms in idempotent algebras when the support set $A$ was fixed (from the proof of~\cite[Corollary~3.6]{markovic-maroti-mckenzie-blockers} it in fact follows that the problem is fixed parameter tractable when parameterized by $|A|$). Our Algorithm~\ref{algBlocker} improves this result by making the running time depend polynomially on $|A|$. 

Finally, a problem related to deciding cube terms is to decide if a given algebra has a cube term of a given fixed dimension. Horowitz investigates this problem~\cite{horowitz-ijac} and, among other things, gives a polynomial time to decide whether an idempotent algebra has a cube term of a given fixed dimension. However, the local to global method from~\cite{horowitz-ijac} does not easily extend to the situation when we want a cube term of any dimension.

  Before we begin, we would like to point out that the part of our paper
  devoted to idempotent algebras (Section~\ref{secCTBidemp}) has a significant overlap with the
  results in~\cite{kearnes-szendrei-cube-term-blockers} by Keith Kearnes and
  Agnes Szendrei. To be specific, \cite[Theorem
  4.1]{kearnes-szendrei-cube-term-blockers} is a version of our
  Theorem~\ref{thmIdempotentCTB}. 
  Both our Theorem~\ref{thmIdempotentCTB} and \cite[Theorem 4.1]{kearnes-szendrei-cube-term-blockers} talk about idempotent algebras with finitely many basic operations. Unlike Theorem~4.1, our Theorem~\ref{thmIdempotentCTB} only applies to finite algebras.  However, on its home ground  Theorem~\ref{thmIdempotentCTB} sometimes offers a better upper bound on the dimension of a cube term; this happens for idempotent algebras with more than $\binom{|A|}{2}$ basic operations. This makes Theorem~\ref{thmIdempotentCTB} better suited for examining finite algebras. In fact, we use it to obtain a bound on the minimum dimension of a cube term that depends quadratically on $|A|$ and linearly on the maximum arity of a basic operation of $\algA$ (Corollary~\ref{cor:quadratic-linear}).
  
  Another topic our paper shares with~\cite{kearnes-szendrei-cube-term-blockers} is the construction of algebras
  proving that the bound on the dimension of a
  cube term from Theorem~\ref{thmIdempotentCTB} (or \cite[Theorem
  4.1]{kearnes-szendrei-cube-term-blockers}) is tight; this is Theorem~\ref{thmIdempotentTight} here and Example~4.4 in \cite{kearnes-szendrei-cube-term-blockers}. While the outcome is
  similar, our construction is novel in that it works for any (finite, greater
  than 2) size of the base set.
  
  When constructing the proof of Theorem~\ref{thmIdempotentCTB}, we had heard
  the statement of Theorem~4.1 in~\cite{kearnes-szendrei-cube-term-blockers},
  thus priority for the result belongs to Keith Kearnes and Agnes Szendrei. It
  also turns out that our methods for idempotent algebras resemble those
  of~\cite{kearnes-szendrei-cube-term-blockers} (in particular, our ``chipped
  cubes'' generalize the ``crosses'' of Keith Kearnes and Agnes Szendrei). 
  However, we produced the proof of Theorem~\ref{thmIdempotentCTB} on our own
  as we only knew the statement, not the proof of~\cite[Theorem
  4.1]{kearnes-szendrei-cube-term-blockers} when discovering our proof. 

  We include a full
  proof of Theorem~\ref{thmIdempotentCTB} in this paper because it illustrates
  the ideas we later develop for the non-idempotent case and also because
  Theorem~\ref{thmIdempotentCTB} naturally leads to
  Theorem~\ref{thmIdempotentTight} which shows that the bounds on cube term
  dimension of Theorem~\ref{thmIdempotentCTB} are tight for all applicable sizes of
  the base set of the algebra in question, improving the state of the art.

  \section{Preliminaries}
  We will be using basic notations such as operations, algebra of a given signature, clone of operations of an algebra, product of algebras, subuniverse, subuniverse generated by a set, variety and identity (equation) as is usual in universal algebra. See, e.g,~\cite{burris,uabook}.
  
  We will spend much time and effort designing and examining tuples.  A \emph{tuple} on
  $A$ of arity $n$ (or $n$-tuple on $A$) is a sequence of $n$ members of the set
  $A$. If we want to emphasize that an object is a tuple we print it in bold:
  $\vect a$.  The set of all $n$-tuples on $A$ will be denoted by $A^n$.  If
  $\vect a\in A^n$ and $i\in \en$, we denote the $i$-th entry of $\vect a$ by
  $a_i$ or sometimes by $(\vect a)_i$.  On the other hand, indices without
  parentheses and in bold shall refer to a particular member of a sequence of
  tuples, so e.g.  $\vect{a_3}$ is the third tuple from some sequence, not the
  third entry of $\vect a$. If confusion is unlikely, we will write the tuple
  $(a_1,a_2,\dots,a_n)$ in a more compact form as $a_1a_2\dots a_n$.

  When $\vect a\in A^n,\vect b\in A^k$ are tuples, we will denote by $\vect
  a\vect b$ their concatenation, i.e. the tuple $a_1a_2\dots a_nb_1 b_2\dots
  b_k\in A^{n+k}$. 
  
  If $i\leq j$ are positive integers and $\vect a$ is an $n$-tuple, then by
  $\vect a_{[i,j]}$ we mean the $(j-i+1)$-tuple $a_ia_{i+1}\dots a_{j}$.

  If $a\in A$ and $k\in\en$ then $a^{\bf k}$ is the $k$-tuple whose all entries are $a$ 
  (the boldface $k$ signals that we are turning $a$ into a tuple),
  i.e. $a^{\bf k}=aa\dots a$. If $\vect a=a_1^{\bf
  n_1}a_2^{\bf n_2}\dots a_k^{\bf n_k}$ for some $a_1,\dots,a_k\in A$, we will
  call the interval $B_i=\{j\colon n_1+\dots+n_{i-1} <j\leq n_1+\dots+n_i \}$
  the $i$-th \emph{block} of $\vect a$. The partition into blocks can be 
  ambiguous if e.g. $a_1=a_2$, but this will not be an issue as we will 
  usually fix the partition in advance. A careful reader might have noticed
  that we have overloaded e.g. $a_2$ to mean both the second element and the
  $(n_1+1)$-th element of $\vect a$. We did this to keep our notation short; 
  when $\vect a$ is broken into obvious blocks then $a_2$ always stands for the
  element forming up the second block of $\vect a$.

  Finally, if $A$ is a (finite) set then $\seq{A}$ is the $|A|$-tuple that lists all 
  elements of $A$ in a fixed order (for example, $\seq{\{3,1,4\}}=(1,3,4)$). 
  We will denote the set $\{1,2,\dots,n\}$ by $[n]$.

  The following way to combine tuples, introduced in~\cite{BIMMVW}, will be useful when talking about cube
  terms and blockers:
  \begin{definition}
    Let $n\in\en$, $\vect a,\vect b\in A^n$ be two tuples. We then define for
    each $I\subset [n]$ the $n$-tuple $\chi_I(\vect a,\vect b)$ by
  \[
    (\chi_I(\vect a,\vect b))_i=\begin{cases}
     b_i&\text{if}\; i\in I\\
     a_i&\text{if}\; i\not\in I.\\
   \end{cases}
  \]
  
  \end{definition}

  In particular, we will often consider the matrix $(\chi_I(\vect a,\vect
  b)\colon I\neq \emptyset)$. If $\vect a,\vect b\in A^n$, then $(\chi_I(\vect a,\vect
  b)\colon I\neq \emptyset)$ is an
  $n\times (2^n-1)$ matrix whose $i$-th column is $\chi_I(\vect a,\vect b)$
  where $I$ is the $i$-th nonempty subset of $[n]$ (ordered in some fixed way).

We will often apply a term operation to a matrix. Let $\algA$
  be an algebra and let $t$ be an $m$-ary operation of $\algA$. If $M$ is an
  $n\times m$ matrix, then $t(M)$ is the $n$-tuple that we obtain by applying
  $t$ on the rows of $M$.


For $E\subset A^n$, we will denote by $\Sg_{\algA^n}(E)$ the subuniverse of $\algA^n$
  generated by $E$. 
  If we want to emphasize the dimension and the algebra we are
  talking about, we will write $\Sg_{\algA^n}(E)$. To simplify
  notation, we will often omit curly brackets in the argument of $\Sg$, writing
  e.g. $\Sg(u,v)$ instead of $\Sg(\{u,v\})$.

  The algebra $\algA$ is \emph{idempotent} if the identity
  $t(a,a,\dots,a)=a$
  holds for each term operation $t$ of $\algA$ and all $a\in A$ (this is equivalent to demanding that the identity holds for 
  each of $\algA$'s basic operations).


  Our situation in the rest of the paper is that we are given a finite algebra
  $\algA$ described by a list of its elements and the tables of its (finitely
  many) basic operations, and we want to decide if there is a cube term in $\algA$. Occasionally, we shall need to distinguish the number of
  elements of $\algA$, denoted by $|A|$, from the total size of the input which
  includes the list of elements plus a table of size $|A|^r$ for each basic
  operation of $\algA$ of arity $r$. We denote the total size of $\algA$'s
  description by $|\algA|$.  

  Given a set
  $S\subset A$, we can find the subalgebra
  of $\algA$ generated by $S$ in time $O(m|\algA|)$, where $m$ is the maximum arity of a basic operation of $\algA$. The algorithm works by
  generating a sequence $S=S_0\subsetneq S_1\subsetneq S_2\subsetneq \dots$ of subsets
  of $A$ that terminates with $\Sg(S)$. We obtain $S_{i+1}$ from $S_i$ by applying all the basic operations
  of $\algA$ to tuples from $S_i$ that contain at least one element outside of
  $S_{i-1}$ (this last condition ensures that we handle each tuple at most once
  for each basic operation; in the $i=0$ step, we let $S_{-1}=\emptyset$).  We do not claim authorship of this
  algorithm; it was previously mentioned
  in~\cite{freese-valeriote-maltsev-conditions}.

  \begin{definition}
    Let $\algA$ be an algebra. A relation $R\subset A^n$ is \emph{compatible with $\algA$}
    (also called $\algA$-invariant or admissible in the literature) if $R$ is a
    subuniverse of $\algA^{n}$. In other words, $R\leq \algA^n$ if for every $m$-ary basic operation $f$ of
    $\algA$ the operation $f$ extended to $A^n$ maps $R$ into itself: For every $\vect{r_1},\dots,\vect{r_m}\in R$ we have $f(M)\in R$
    where $M$ is the $n\times m$ matrix whose $j$-th column is $\vect{r_j}$.
  \end{definition}

  The following proposition is easily proved from the definition of relations
  compatible with an algebra. We note that this proposition is part of a larger theory of
  Galois correspondence between clones of operations and relational clones
  (sets of relations closed under primitive positive
  definitions)~\cite{bodnarchuk-et-al, geiger}.
  \begin{proposition}\label{prop:Galois}
    Let $R\leq \algA^n, S\leq \algA^m$ be relations compatible with $\algA$. Then the following relations
    are also compatible with $\algA$: 
    \begin{enumerate}
      \item The unary relations $A$ and $\emptyset$,
      \item the projection of $R$ to any subset of $[n]$,
      \item the relation $R\times S$,
      \item if $n=m$, the relation $R\cap S$, and
      \item the unary relation $\{(a)\}$, where $a\in \algA$, if $\algA$ is idempotent.
    \end{enumerate}
  \end{proposition}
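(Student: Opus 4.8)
The plan is to verify each of the five items directly from the definition of compatibility. By that definition it suffices, for a candidate relation $T\leq\algA^k$, to check that for every $m$-ary basic operation $f$ of $\algA$ and every choice of columns in $T$, applying $f$ row-wise to the resulting $k\times m$ matrix yields a tuple that is again in $T$. So throughout I fix an arbitrary $m$-ary basic operation $f$ of $\algA$ and push the corresponding matrix through each construction.

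Items (1), (3), (4), (5) are then immediate unwindings. For (1), the unary relation $A$ is just the underlying set of $\algA^1$, hence a subuniverse, and $\emptyset$ is vacuously closed under $f$. For (4), if $\vect{r_1},\dots,\vect{r_m}\in R\cap S$ and $M$ is the matrix with these columns, then $f(M)\in R$ since $R\leq\algA^n$ and $f(M)\in S$ since $S\leq\algA^n$, so $f(M)\in R\cap S$. For (3), a column of $R\times S$ is a concatenation $\vect{r_j}\vect{s_j}$ with $\vect{r_j}\in R$ and $\vect{s_j}\in S$; since $f$ acts independently on each row, the tuple $f$ produces is the concatenation of $f$ applied to the matrix with columns $\vect{r_1},\dots,\vect{r_m}$ (which lies in $R$) with $f$ applied to the matrix with columns $\vect{s_1},\dots,\vect{s_m}$ (which lies in $S$), hence lies in $R\times S$. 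For (5), idempotence of $\algA$ gives $f(a,a,\dots,a)=a$, so the $1\times m$ matrix with all entries equal to $a$ is sent by $f$ to $a$, and $\{(a)\}$ is closed.

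The only item that calls for a word of care is (2), and I expect it to be the single (mild) obstacle in the argument. Let $J\subseteq[n]$ and let $\pi\colon A^n\to A^{|J|}$ be the projection onto the coordinates in $J$, with image $\pi(R)$. Given $\vect{a_1},\dots,\vect{a_m}\in\pi(R)$, I would choose for each $j$ some preimage $\vect{b_j}\in R$ with $\pi(\vect{b_j})=\vect{a_j}$; any choice works, and this is the step where one must actually use that $R$ is a relation rather than just a function. Letting $M$ have columns $\vect{b_1},\dots,\vect{b_m}$, we get $f(M)\in R$, and because row-wise application of $f$ commutes with deleting rows, $\pi(f(M))$ equals $f$ applied row-wise to the matrix with columns $\vect{a_1},\dots,\vect{a_m}$; hence that tuple lies in $\pi(R)$, as required. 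Beyond this observation the whole proposition is a routine check, which is presumably why the authors state it without proof.
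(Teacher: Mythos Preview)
Your proof is correct and follows exactly the approach the paper indicates: the authors do not give a proof but simply remark that the proposition ``is easily proved from the definition of relations compatible with an algebra,'' which is precisely the direct verification you carry out. Your handling of item~(2), lifting tuples from $\pi(R)$ to $R$ and using that row-wise application of $f$ commutes with deleting rows, is the standard argument and is fine.
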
 
  One application of Proposition~\ref{prop:Galois} that we will use is that 
  if $R$ is an $n$-ary relation compatible with an idempotent algebra $\algA$
  and $a$ is an element of $A$, then the relation $S=\{(s_1,\dots,s_{n-1})\colon (a,s_1,\dots,s_{n-1})\in R\}$ we get from $R$ by ``fixing the first
  coordinate to $a$ and projecting it out''
  is also compatible with $\algA$.


  A $(2^d-1)$-ary operation $t$ on $A$ is called a \emph{cube operation} of
  dimension $d$ if for all $\vect a,\vect b\in A^d$ we have
  \[
    t(\chi_I(\vect a,\vect b)\colon \emptyset\neq I\subset[d])=\vect a.
  \]
  The reason for this name is that if we view $2^I$ as a $d$-dimensional cube,
  then the cube term will, given a cube with one missing vertex,
	  fill in the
  empty spot. 

  A $(d+1)$-ary operation $u$ is called an \emph{edge} operation of
  dimension $d$ (or just a $d$-edge operation) if
  for any $\vect a,\vect b\in A^d$ we have
  \[
    u(\chi_I(\vect a,\vect b)\colon I=\{1,2\},\{1\},\{2\},\dots,\{d\})=\vect a.
  \]

  Finally, a $d$-ary operation $n$ is a $d$-ary near unanimity operation if for any $\vect a,\vect b\in A^d$ we have
  \[
    n(\chi_I(\vect a,\vect b)\colon I=\{1\},\{2\},\dots,\{d\})=\vect a.
  \]
  (This is not the usual way to write the near unanimity equalities, but we chose it here
  to show the similarity between near unanimity  and cube/edge operations.)

  Note that since all three of the above definitions (cube, edge, and near unanimity
  operations) require that an equality holds for any $\vect a,\vect b\in A^d$, 
  we could have also easily rewritten them as systems of identities in two variables.
  For example, a near  unanimity operation is an operation satisfying the following identities for all $x,y\in A$:
  \[f(y,x,\dots,x) = f(x,y,x,\dots,x) = \dots = f(x,\dots,x,y)=x.\]

  It is easy to see that a $d$-dimensional edge term implies the
  existence of a $d$-dimensional cube term. It turns out that one can
	  also prove the converse:
  \begin{theorem}[\protect{\cite[Theorem 2.12]{BIMMVW}}]\label{thmEdgeTerm}
    Let $\algA$ be an algebra. Then $\algA$ has a $d$-dimensional edge
    term operation if and only if $\algA$ has a $d$-dimensional cube term
    operation.
  \end{theorem}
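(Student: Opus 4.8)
We first dispose of the easy implication. If $u$ is a $d$-edge operation of $\algA$, then the $(2^d-1)$-ary operation
\[
  t\bigl(z_I\colon\emptyset\neq I\subseteq[d]\bigr):=u\bigl(z_{\{1,2\}},z_{\{1\}},z_{\{2\}},\dots,z_{\{d\}}\bigr),
\]
obtained by feeding $u$ only the $d+1$ arguments indexed by $\{1,2\},\{1\},\dots,\{d\}$ and discarding the rest, satisfies $t\bigl(\chi_I(\vect a,\vect b)\colon\emptyset\neq I\bigr)=u\bigl(\chi_{\{1,2\}}(\vect a,\vect b),\dots,\chi_{\{d\}}(\vect a,\vect b)\bigr)=\vect a$ for all $\vect a,\vect b\in A^d$, so $t$ is a $d$-cube operation. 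The plan for the converse --- a $d$-cube term yields a $d$-edge term --- is as follows.

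First I would recast both properties as membership statements in one power of $\algA$. Put $\algB=\prod_{\vect a,\vect b\in A^d}\algA^d$, for $\emptyset\neq I\subseteq[d]$ let $\vect e_I\in B$ have $(\vect e_I)_{(\vect a,\vect b)}=\chi_I(\vect a,\vect b)$, and let $\vect g\in B$ have $\vect g_{(\vect a,\vect b)}=\vect a$. Evaluating a candidate term coordinate by coordinate shows at once that $\algA$ has a $d$-cube term if and only if $\vect g\in\Sg_{\algB}\bigl(\{\vect e_I\colon\emptyset\neq I\subseteq[d]\}\bigr)$, and that $\algA$ has a $d$-edge term if and only if $\vect g\in\Sg_{\algB}\bigl(\{\vect e_{\{1,2\}},\vect e_{\{1\}},\vect e_{\{2\}},\dots,\vect e_{\{d\}}\}\bigr)$. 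Since the second generating set is contained in the first, the easy direction reappears; what has to be shown is that, under the cube-term hypothesis, $\vect g$ already lies in the subuniverse generated by the $d+1$ ``edge'' tuples.

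I would try to prove this by induction on $d$, the goal of the inductive step being to show that the cube term $t$ lets one dispense with the ``extra'' generators $\vect e_I$ with $|I|\ge2$, $I\neq\{1,2\}$. The base case $d=2$ is genuine but clear: the $d=2$ cube identities hold for a term of $\algA$ exactly when $\algA$ has a Maltsev term, which is the same as having a $2$-edge term.

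The whole difficulty is in the inductive step, and the obstacle is that the obvious approach breaks down. One cannot in general recover a missing $\vect e_I$ from the edge generators: coordinatewise this would mean ``merging'' two $\chi$-tuples along a shared coordinate, which is a Maltsev-type operation, and a cube term of dimension $\ge3$ need not yield one (a two-element lattice has a cube term but no Maltsev term). And in fact already for $d=3$ one checks that \emph{no} single substitution of the $d+1$ edge generators into $t$ produces $\vect g$. So the edge term has to be built as a genuinely nested composition of several copies of $t$, designed so that each of the $d$ edge identities becomes a consequence of the cube identities; getting this combinatorial pattern right is the crux. I expect the cleanest route is either to write down such a composition explicitly and verify it against the cube identities, or to route through an intermediate Maltsev-type condition --- a ``parallelogram term'' --- which lies between the cube and the edge conditions and is easier both to derive from a cube term and to turn into an edge term. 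Constructing and checking this composition is essentially the entire proof.
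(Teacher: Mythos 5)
The easy implication is fine: discarding all variables of the cube term except the $d+1$ whose index sets are $\{1,2\},\{1\},\dots,\{d\}$ does indeed turn a $d$-edge operation into a $d$-cube operation, exactly as you write.

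The converse --- that a $d$-cube term yields a $d$-edge term --- is the nontrivial content of the theorem, and you have not proved it. Your reformulation as a membership statement $\vect g\in\Sg_{\algB}(\cdots)$ in the free-like power $\algB$ is the standard and correct way to set it up, your $d=2$ base case observation is right (both conditions reduce to having a Maltsev term there), and your remark that a single substitution of the edge generators into $t$ cannot produce $\vect g$ for $d\ge 3$ is an accurate diagnosis of where the difficulty lies. But at exactly that point the argument stops being a proof: you write that the edge term ``has to be built as a genuinely nested composition of several copies of $t$,'' and then ``Constructing and checking this composition is essentially the entire proof.'' That last sentence is correct, which is precisely the problem --- the composition is the theorem, and you have described the shape of a proof rather than given one. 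The induction you propose has no inductive step; you name two plausible routes (an explicit nested composition, or detouring through parallelogram terms) without executing either. Note also that the paper itself does not prove this statement; it cites \cite[Theorem~2.12]{BIMMVW}, so the burden of the hard direction is entirely on the missing construction.
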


  The situation with near unanimity is a bit more complicated since having
  a $d$-ary near unanimity operation is
  a strictly stronger condition than having an edge or a cube term. However,
  it turns out that having a $k$-ary near unanimity operation is equivalent to having a $k$-edge
  term (or, by Theorem~\ref{thmEdgeTerm}, $k$-dimensional cube term) 
  for algebras in congruence distributive varieties (for an earlier result of a similar
  flavor, see~\cite[Theorem 3.16]{sequeira-nu}).
  
  \begin{theorem}[\protect{\cite[Theorem 4.4]{BIMMVW}}]\label{thmCDedgeNU}
    For each $k\geq 3$, a variety $\mathcal V$ is congruence distributive and has a $k$-edge term
    $t$ if and only if $\mathcal V$ has a $k$-ary near unanimity term.
  \end{theorem}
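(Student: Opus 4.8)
The plan is to prove the two implications separately; producing an edge term and congruence distributivity from a near unanimity term is routine, whereas the converse carries all the weight.

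\textbf{Near unanimity implies congruence distributivity and a $k$-edge term.} Given a $k$-ary near unanimity term $f$ of $\mathcal V$, set $t(x_0,x_1,\dots,x_k):=f(x_1,\dots,x_k)$. Reading off the edge identities required of $t$ from the rows of the matrix $(\chi_I(\vect a,\vect b)\colon I=\{1,2\},\{1\},\dots,\{k\})$, each of them discards the $\chi_{\{1,2\}}$ column --- the one feeding the dummy slot $x_0$ --- and then demands one of the near unanimity identities of $f$; so $t$ is a $k$-edge term. That a near unanimity term forces congruence distributivity is classical: a J\'onsson chain is produced by plugging $x,y,z$ into $f$ in suitable coordinates. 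I would use this as a black box.

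\textbf{Congruence distributivity plus a $k$-edge term implies a $k$-ary near unanimity term.} First I would reduce to a membership question in a free algebra. Write $\mathbf F:=\mathbf F_{\mathcal V}(x,y)$ and, for $i\in[k]$, let $\vect{g_i}:=\chi_{\{i\}}(x^{\bf k},y^{\bf k})\in\mathbf F^k$, the tuple with $y$ in coordinate $i$ and $x$ in the others. Since the near unanimity identities involve only $x$ and $y$, a standard argument shows that $\mathcal V$ has a $k$-ary near unanimity term if and only if $x^{\bf k}\in\Sg_{\mathbf F^k}(\vect{g_1},\dots,\vect{g_k})$: any term witnessing the membership becomes, after renaming its variables, a $k$-ary near unanimity term, and conversely. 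Evaluating the defining identity of the given $k$-edge term $t$ at $\vect a=x^{\bf k}$, $\vect b=y^{\bf k}$ yields
\[
  x^{\bf k}=t\bigl(\vect h,\vect{g_1},\vect{g_2},\dots,\vect{g_k}\bigr),\qquad\text{where }\vect h:=\chi_{\{1,2\}}(x^{\bf k},y^{\bf k})=(y,y,x,\dots,x),
\]
so $x^{\bf k}$ already lies in the subuniverse generated by $\vect{g_1},\dots,\vect{g_k}$ together with the \emph{single} extra tuple $\vect h$. Eliminating $\vect h$ is the whole remaining problem, and it is the only step that uses congruence distributivity.

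\textbf{Eliminating $\vect h$, and the main obstacle.} The tuple $\vect h$ agrees with $\vect{g_1}$ in every coordinate except coordinate $2$, and with $\vect{g_2}$ in every coordinate except coordinate $1$. Applying the J\'onsson terms $d_0,\dots,d_n$ of $\mathcal V$ coordinatewise to $(\vect{g_1},\vect h,\vect{g_2})$ produces tuples $\vect{h_j}:=d_j(\vect{g_1},\vect h,\vect{g_2})$ with $\vect{h_0}=\vect{g_1}$ and $\vect{h_n}=\vect{g_2}$, each equal to $x$ in all coordinates $\ge 3$ (by idempotence of the $d_j$), and with $\vect{h_j}$ and $\vect{h_{j+1}}$ agreeing in coordinate $1$ when $j$ is even and in coordinate $2$ when $j$ is odd. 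Now put $\vect{u_j}:=t(\vect{h_j},\vect{g_1},\dots,\vect{g_k})$. The edge identities that certified coordinates $3,\dots,k$ of $x^{\bf k}$ still apply, since they only use that the first argument is $x$ in those coordinates; hence every $\vect{u_j}$ equals $x$ in coordinates $\ge 3$, the endpoints $\vect{u_0}$ and $\vect{u_n}$ lie in $\Sg_{\mathbf F^k}(\vect{g_1},\dots,\vect{g_k})$, and $\vect{u_0},\dots,\vect{u_n}$ is again a fence whose only possible defects sit in coordinates $1$ and $2$ --- where the two edge identities coming from rows $1$ and $2$ of the edge matrix pin the values to $x$. \emph{The main obstacle} is to make this last step airtight: one must show that such a fence of elements of $\Sg_{\mathbf F^k}(\vect{g_1},\dots,\vect{g_k})$, together with the edge identities, actually forces $x^{\bf k}$ itself into the subuniverse, not merely its two endpoints --- equivalently, that at each stage the generator one needs to feed the edge term is one of $\vect{g_1},\dots,\vect{g_k}$ rather than $\vect h$. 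I expect this to go through either by iterating the reduction, so that the residual two-coordinate problem is settled outright by the row-$1$ and row-$2$ edge identities, or by an induction on the length of the J\'onsson chain in which the edge identities are re-invoked at every step to re-certify the needed generator. Once $\vect h$ is gone, the composite term built from $t$ and $d_0,\dots,d_n$ is, by the reduction, a $k$-ary near unanimity term of $\mathcal V$, and the theorem follows.
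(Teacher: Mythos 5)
The theorem in question is cited by the paper from~\cite{BIMMVW} (their Theorem~4.4); the paper gives no proof of its own, so I can only assess your attempt on its merits. Your easy direction is fine: padding a $k$-ary near unanimity term with a dummy first variable does give a $k$-edge term, and near unanimity implying congruence distributivity is classical. Your reduction of the hard direction to the membership question $x^{\bf k}\in\Sg_{\mathbf F^k}(\vect{g_1},\dots,\vect{g_k})$ is also correct and standard, as is the observation that the edge identity places $x^{\bf k}$ in the subuniverse generated by $\vect{g_1},\dots,\vect{g_k}$ together with the single extra tuple $\vect h$.

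The fence step, however, has a genuine gap, and it is not merely an unpolished detail. Two problems. First, the intermediate tuples $\vect{u_j}=t(\vect{h_j},\vect{g_1},\dots,\vect{g_k})$ for $0<j<n$ are \emph{not} elements of $R:=\Sg_{\mathbf F^k}(\vect{g_1},\dots,\vect{g_k})$: each $\vect{h_j}=d_j(\vect{g_1},\vect h,\vect{g_2})$ still has $\vect h$ fed into it, and $\vect h$ is precisely the tuple you have not yet shown to lie in $R$. So the fence does not live inside $R$, and pinning $x^{\bf k}$ between its endpoints does not put $x^{\bf k}$ in $R$. Second, the assertion that ``the two edge identities coming from rows 1 and 2 of the edge matrix pin the values to $x$'' overstates what the edge identities give. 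Computing the endpoints, $\vect{u_0}=t(\vect{g_1},\vect{g_1},\vect{g_2},\dots,\vect{g_k})$ has first coordinate $t(y,y,x,\dots,x)=x$ by the row-1 identity, but its second coordinate is $t(x,x,y,x,\dots,x)$, which is \emph{not} covered by any edge identity; symmetrically $\vect{u_n}$ has second coordinate $x$ but undetermined first coordinate $t(x,y,x,\dots,x)$. These are exactly the two values one would need to be $x$ to conclude, and the edge identities alone simply do not force them (if they did, one would have an NU term without invoking congruence distributivity at all). Neither of your suggested remedies --- ``iterating the reduction'' or re-invoking the edge identities at each step of the J\'onsson chain --- addresses either of these two obstacles; in particular, re-invoking an edge identity never resolves the $t(x,x,y,x,\dots,x)$ slot. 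A correct proof must produce, from the edge term and the J\'onsson terms, a term that provably absorbs the $\{1,2\}$-indexed coordinate (or equivalently a modified edge term additionally satisfying $t'(x,x,y,x,\dots,x)=x$), and that requires a construction that is not present in your sketch.
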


  Thus any bound on the minimal dimension of a cube term is also a bound on the minimal
  arity of a near unanimity operation. 

  As noted in the Introduction, deciding the existence of a near unanimity term
  reduces to deciding the existence of a cube term: By 
  Theorem~\ref{thmCDedgeNU}, an algebra $\algA$ has a near  unanimity term if and
  only if it has a cube term and $\algA$ lies in a congruence
  distributive variety. One can test
  whether $\algA$ generates a congruence distributive variety in
  time polynomial in $|\algA|$ for idempotent algebras (see~\cite{freese-valeriote-maltsev-conditions} for the original algorithm or~\cite{barto-kazda-absorption} for a more elementary algorithm)
  and in time exponential in $\algA$ for general algebras (by
  taking the idempotent algorithm and adding the prefix $\seq{A}$
  to all tuples; see Lemma~\ref{lemMatrix} below).
 As it happens, our cube term deciding algorithms also run in polynomial time (measured in $|\algA|$) for an idempotent $\algA$ and exponential time for a general $\algA$ (see Theorem~\ref{thmIdempotentCTalg} and Corollary~\ref{cor:edge-in-exptime}). We will return to near unanimity terms in Section~\ref{secGeneralAlg}.

  To decide the existence of a cube term, we want to translate
  the problem from the language of operations into the language of relations. We have a
  good description of the shape of relations that, when compatible with an algebra
  $\algA$, prevent $\algA$ from having cube terms:

  We say that an $n$-ary relation $R$ on $A$ is \emph{elusive} if there exist
  tuples $\vect a,\vect b\in A^n$ such that $\chi_I(\vect a,\vect b)\in R$ for all 
  $I\neq\emptyset$, but $\chi_\emptyset(\vect a,\vect b)=\vect a\not\in R$.
  In this situation, the tuple $(a_1,\dots,a_n)$
  is called an \emph{elusive tuple} for $R$ (this is a notion similar to, but
  stricter than, essential
  tuples used in~\cite{Zhuk2017}; we note that elusive tuples were used, unnamed, already
  in~\cite{BIMMVW}). Elusive relations prevent the existence of cube terms of
  low dimensions:
  \begin{observation}\label{obsElusiveBlocks}
    If $\algA$ is an idempotent algebra which is compatible with an $n$-ary elusive
    relation, then $\algA$ does not have any cube term of dimension $n$ or less.
  \end{observation}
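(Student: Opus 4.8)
The plan is to argue by contradiction. Suppose $\algA$ is compatible with an $n$-ary elusive relation $R$, witnessed by tuples $\vect a,\vect b\in A^n$ (so that $\chi_I(\vect a,\vect b)\in R$ for every nonempty $I\subset[n]$ while $\vect a\notin R$), and suppose for contradiction that $\algA$ has a cube operation $t$ of some dimension $d\leq n$. The idea is to feed suitably chosen members of $R$ into $t$ and obtain the forbidden tuple $\vect a$ as the output.

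Since $d\leq n$, fix a partition of $[n]$ into $d$ \emph{nonempty} blocks $P_1,\dots,P_d$, and for each nonempty $J\subset[d]$ write $P(J)=\bigcup_{j\in J}P_j$. Because every $P_j$ is nonempty, $P(J)$ is a nonempty subset of $[n]$ whenever $J\neq\emptyset$, hence $\chi_{P(J)}(\vect a,\vect b)\in R$. Let $M$ be the $n\times(2^d-1)$ matrix whose columns are the tuples $\chi_{P(J)}(\vect a,\vect b)$, with columns ordered by the same fixed ordering of the nonempty subsets of $[d]$ that appears in the defining identity of $t$.

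The key step is to read off the rows of $M$. If $k\in[n]$ lies in the block $P_{j_0}$, then, since the blocks are pairwise disjoint, $k\in P(J)$ if and only if $j_0\in J$; consequently the entry of column $J$ in row $k$ of $M$ equals $b_k$ when $j_0\in J$ and equals $a_k$ otherwise. In other words, row $k$ of $M$ equals $\bigl((\chi_J(\vect x,\vect y))_{j_0}\colon\emptyset\neq J\subset[d]\bigr)$, where $\vect x,\vect y\in A^d$ are the constant $d$-tuples all of whose entries equal $a_k$, resp.\ all equal $b_k$. Applying the defining identity $t(\chi_J(\vect x,\vect y)\colon\emptyset\neq J\subset[d])=\vect x$ of the cube operation and looking at the $j_0$-th coordinate of both sides, we conclude that $t$ applied to row $k$ of $M$ yields $a_k$. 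Therefore $t(M)=\vect a$. On the other hand, every column of $M$ lies in $R$, and $R$ is closed under the term operation $t$ because $R$ is compatible with $\algA$; hence $t(M)\in R$, so $\vect a\in R$, contradicting elusiveness.

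The one point that needs care is the block construction: each $P_j$ must be nonempty, so that no $P(J)$ degenerates to $\emptyset$. If some block were empty, the corresponding column of $M$ would be $\chi_\emptyset(\vect a,\vect b)=\vect a$, which is exactly the tuple not guaranteed to lie in $R$, and the argument would collapse. Requiring all $d$ blocks nonempty is possible precisely because $d\leq n$, so this is less a genuine obstacle than an indication of where the dimension hypothesis is spent; everything else is a direct verification. I note in passing that the argument does not actually use idempotence of $\algA$ — we phrase the observation for idempotent algebras only because that is the context of this section.
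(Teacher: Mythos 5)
Your proof is correct and takes essentially the same approach as the paper: the paper invokes the ``trivial'' reduction to an $n$-dimensional cube term by introducing dummy variables and then applies it to $(\chi_I(\vect a,\vect b)\colon I\neq\emptyset)$, whereas you carry out that reduction explicitly via a partition of $[n]$ into $d$ nonempty blocks and apply the $d$-dimensional term directly — the row-by-row verification you supply is exactly what makes the dummy-variable step work. Your closing remark is also accurate: the idempotence hypothesis is not used (a cube term is automatically idempotent, as the identity applied to $\vect a=\vect b$ shows), so the observation holds for arbitrary algebras.
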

  \begin{proof}
    Let $R\leq \algA^n$ be elusive and let
    $\vect a,\vect b$ be a pair of tuples that witness the elusiveness of
    $R$.

    Since we can trivially obtain an $n$ dimensional cube term from a cube term
    of lower dimension by introducing dummy variables, we only consider the
    case that $\algA$ has a cube term of dimension $n$. Then applying this cube
    term on $(\chi_I(\vect a,\vect b)\colon I\neq\emptyset)$ would give us
    $\vect a\in R$, a contradiction.
  \end{proof}

  \begin{definition}\label{defBlocker}
  Let $\algA$ be an idempotent algebra. Then a pair $(C,D)$ is a \emph{cube term
    blocker} (or just a ``blocker'' for short) if $C$ and $D$ are nonempty subuniverses of $\algA$, $C\subsetneq D$
  and $\forall n,  D^n\setminus  (D\setminus C)^n \leq \algA^n$.
  \end{definition}
  The reason for the name ``cube term blockers'' comes from the following
  result:
 \begin{proposition}[\cite{markovic-maroti-mckenzie-blockers}]\label{propMMMblocker}
    Let $\algA$ be a finite idempotent algebra. Then $\algA$ has a cube term if
    and only if it possesses no cube term blockers.
  \end{proposition}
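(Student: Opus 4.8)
The plan is to prove the two implications separately, since one is routine while the other carries essentially all of the content.

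\emph{A blocker rules out cube terms.} Let $(C,D)$ be a cube term blocker and fix an arbitrary $n\in\en$. The relation $R_n:=D^n\setminus(D\setminus C)^n$ is compatible with $\algA$ by Definition~\ref{defBlocker}. I claim $R_n$ is elusive. Choose $b\in C$ and $a\in D\setminus C$, which is possible because $C$ is a nonempty proper subset of $D$, and put $\vect a:=a^{\bf n}$, $\vect b:=b^{\bf n}$. For every nonempty $I\subset[n]$ the tuple $\chi_I(\vect a,\vect b)$ has all coordinates in $D$ and at least one coordinate equal to $b\in C$, so $\chi_I(\vect a,\vect b)\in R_n$; on the other hand $\chi_\emptyset(\vect a,\vect b)=\vect a\in(D\setminus C)^n$, so $\vect a\notin R_n$. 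Thus $R_n$ is an $n$-ary elusive relation compatible with $\algA$, and Observation~\ref{obsElusiveBlocks} gives that $\algA$ has no cube term of dimension at most $n$. Letting $n$ range over all of $\en$ shows $\algA$ has no cube term.

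\emph{Absence of cube terms produces a blocker.} Here I would argue by contraposition. First I would pass to the language of edge terms and relations. By Theorem~\ref{thmEdgeTerm}, $\algA$ has no cube term iff it has no $d$-edge term for any $d$; and since $\algA$ is idempotent, adding a dummy argument turns a $d$-edge term into a $(d+1)$-edge term, so the $d$-edge terms form an increasing chain and the hypothesis is equivalent to the absence of $d$-edge terms of arbitrarily large dimension. Translating the $d$ edge identities into a membership statement in the standard way, $\algA$ has a $d$-edge term exactly when a prescribed tuple $\vect z_d$ lies in the subpower $U_d:=\Sg_{\algA^{d|A|^2}}(\vect c_1,\dots,\vect c_{d+1})$ generated by the $d+1$ ``columns'' of the $d$ edge patterns evaluated at a free pair of elements; so our assumption reads $\vect z_d\notin U_d$ for every $d$. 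It is convenient to localize first: replacing $\algA$ by a subalgebra that is minimal with respect to not having a cube term, I may assume that every proper subalgebra of $\algA$ has a cube term. Since a blocker of a subalgebra $\algA|_D$ is again a blocker of $\algA$ (the relevant relation lives inside $D^n$, where the operations of $\algA$ and of $\algA|_D$ agree), the easy implication forces the set $D$ in any blocker of this minimal $\algA$ to be $A$ itself; so the goal becomes: find a nonempty proper subuniverse $C\subsetneq A$ with $A^n\setminus(A\setminus C)^n\le\algA^n$ for every $n$.

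The core step is to extract such a $C$ from the infinitely many separations $\vect z_d\notin U_d$. From a careful analysis of the subpowers $U_d$, using the finiteness of $A$ and of the subuniverse lattice of $\algA$ (so that, ranging over all $d$, only finitely many ``configurations'' can occur), I would single out, by a pigeonhole argument applied for large $d$, one pair of distinct elements $a^*\neq b^*$ together with one subuniverse $C\subsetneq D:=\Sg_\algA(a^*,b^*)$ that obstructs the edge identities of \emph{every} dimension simultaneously. One then checks directly that $(C,D)$ is a blocker: a basic operation witnessing $D^n\setminus(D\setminus C)^n\not\le\algA^n$ for some $n$ could be combined with that universal obstruction to manufacture a $d$-edge term for some $d$, contradicting the assumption.

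I expect this last extraction-and-verification step to be the main obstacle. A single elusive relation -- equivalently, the failure of one fixed edge/cube dimension -- is not enough to force a blocker, since an algebra may fail to have a $d$-edge term while still possessing a $(d+1)$-edge term; so the argument must genuinely use the failure at every dimension and show that these failures share a common ``shape'', namely a cube term blocker. Making the passage to a \emph{uniform} obstruction precise, and then verifying that the resulting pair $(C,D)$ satisfies the blocker condition at all arities $n$ at once rather than at the single arity coming from one chosen elusive relation, is where the real work lies.
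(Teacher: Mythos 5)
The paper does not prove this proposition; it is cited verbatim from~\cite{markovic-maroti-mckenzie-blockers} (and the equivalent statement in Theorem~\ref{thmIdempotentCTB} is attributed to~\cite{BIMMVW}). So there is no in-paper proof to compare against, and your proposal has to stand on its own.

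Your ``easy'' direction is correct and essentially canonical: picking $b\in C$, $a\in D\setminus C$, the pair $\vect a=a^{\bf n}$, $\vect b=b^{\bf n}$ witnesses that $D^n\setminus(D\setminus C)^n$ is an $n$-ary elusive relation compatible with $\algA$, and Observation~\ref{obsElusiveBlocks} then kills cube terms of dimension $\le n$ for every $n$. Your preliminary reductions in the hard direction are also sound: a blocker of a subalgebra is a blocker of $\algA$, and passing to a subalgebra minimal without a cube term does force $D=A$ in any blocker of that subalgebra, by the very direction you just proved.

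The genuine gap is exactly the one you flag yourself. The ``core step'' -- extracting, from the infinitely many failures $\vect z_d\notin U_d$, a single pair $(C,D)$ that works uniformly and then verifying the blocker condition at \emph{all} arities -- is not an argument; it is a placeholder. The pigeonhole heuristic (``only finitely many configurations occur'') does not, as written, identify what structure is being pigeonholed or why the recurring configuration has the shape of a chipped cube $D^n\setminus(D\setminus C)^n$ rather than some other elusive relation; nor does it explain how a failure localized at one large $d$ forces closure of $D^n\setminus(D\setminus C)^n$ at every $n$. This is precisely the content of the theorem, and the present paper in fact develops the tools that make such an extraction precise in the idempotent case: Lemma~\ref{lemCCbe} shows that an inclusion-minimal elusive relation compatible with an idempotent algebra is a chipped cube, Lemma~\ref{lemCubeNotes} constrains the pairs $(C_i,D_i)$ appearing in it, and Proposition~\ref{propBlocker} gives a finitary, per-operation criterion for the blocker property that can be checked without quantifying over all $n$. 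A complete proof along your lines would need to replace the pigeonhole sketch with something of this kind: take a minimal elusive relation of sufficiently large arity, identify it as a chipped cube, and argue (via a criterion like Proposition~\ref{propBlocker}) that one of its blocks $(C_i,D_i)$ is already a blocker. As submitted, the hard direction is an outline, not a proof.
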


  Viewed in $n$-dimensional space, the relation $D^n\setminus  (D\setminus
  C)^n$ looks like a hypercube with one corner chipped off (the missing corner
  prevents cube terms from working properly). The following proposition
  gives a logically equivalent way to describe cube term blockers (note
  that the original paper~\cite{markovic-maroti-mckenzie-blockers} actually
  used this as the definition of cube term blockers and showed equivalence with
  our Definition~\ref{defBlocker}).
  \begin{proposition}[\protect{\cite[Lemma
    3.2]{markovic-maroti-mckenzie-blockers}}]\label{propBlocker}
    Let $\algA=(A;f_1,\dots,f_n)$ be an idempotent algebra and let
    $\emptyset\neq 
    C\subsetneq D$ be two subuniverses of $\algA$. Denote the arity of $f_i$ by
    $m_i$.  Then $(C,D)$ is a cube term
    blocker of $\algA$ if and only if for each $i=1,\dots,n$ there exists a
    coordinate $j$ between 1 and $m_i$ such that for each $c\in C$ and
    all $d_1,\dots,d_{m_i}\in D$ we have
    \[
      f(d_1,\dots,d_{j-1},c,d_{j+1},\dots,d_{m_i})\in C.
      \]
  \end{proposition}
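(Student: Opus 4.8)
The plan is to prove the two implications separately, both times reducing to the basic observation that a relation $R\leq\algA^n$ precisely when it is closed under each basic operation $f_i$ applied row-wise to a matrix whose columns lie in $R$ (Definition of a compatible relation). Throughout, for a fixed $n$ write $R=D^n\setminus(D\setminus C)^n$, which is the set of tuples in $D^n$ having at least one entry in $C$; note that a tuple of $D^n$ fails to lie in $R$ exactly when every entry lies in $D\setminus C$.

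For the ``if'' direction, assume each $f_i$ has a coordinate $j_i$ with the stated property. Fix $n$ and take $\vect{r_1},\dots,\vect{r_{m_i}}\in R$; let $M$ be the $n\times m_i$ matrix with these tuples as columns. Since $D$ is a subuniverse, $f_i(M)\in D^n$, so it only remains to exhibit a coordinate of $f_i(M)$ that lies in $C$. Since $\vect{r_{j_i}}\in R\subseteq D^n$ and $\vect{r_{j_i}}\notin(D\setminus C)^n$, some entry of $\vect{r_{j_i}}$, say in row $k$, belongs to $C$. Row $k$ of $M$ then consists of elements of $D$ with the $j_i$-th entry in $C$, so the defining property of $f_i$ gives $(f_i(M))_k\in C$, whence $f_i(M)\in R$. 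As $f_i$ and $n$ were arbitrary, $R\leq\algA^n$ for all $n$ and $(C,D)$ is a cube term blocker.

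For the ``only if'' direction, I argue by contraposition: suppose some basic operation $f=f_i$, say of arity $m$, has no good coordinate. Then for every $j\in[m]$ we may choose $c^{(j)}\in C$ and $d^{(j)}_1,\dots,d^{(j)}_m\in D$ with $f(d^{(j)}_1,\dots,d^{(j)}_{j-1},c^{(j)},d^{(j)}_{j+1},\dots,d^{(j)}_m)\notin C$; since $C\subseteq D$ and $D$ is a subuniverse, this value lies in $D\setminus C$. Now build the $m\times m$ matrix $M$ whose $(j,l)$ entry is $c^{(j)}$ when $l=j$ and $d^{(j)}_l$ otherwise. Row $j$ of $M$ is exactly the tuple just described, so $(f(M))_j\in D\setminus C$ for each $j$, i.e.\ $f(M)\in(D\setminus C)^m$. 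On the other hand, column $l$ of $M$ has $l$-th entry $c^{(l)}\in C$ and all entries in $D$, so each column lies in $R=D^m\setminus(D\setminus C)^m$. Thus $R$ is not closed under $f$, so $R\not\leq\algA^m$ and $(C,D)$ is not a cube term blocker, which is what we wanted.

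I do not expect a real obstacle here: the argument is forced once the setup is unpacked. The one point that needs care is keeping the ``columns are the input tuples, rows are what the operation acts on'' convention straight, and the only mildly creative step is the diagonal matrix $M$ in the contrapositive — diagonal entries taken from $C$ so that every column meets $C$ (hence lies in $R$), and off-diagonal entries from $D$ so that every row is a witness that $f$ lacks a good coordinate (hence the image lands in $(D\setminus C)^m$). Boundary cases are harmless: for $n=1$ one has $R=C$, which is a subuniverse by hypothesis, so nothing needs to be checked there.
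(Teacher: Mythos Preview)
Your proof is correct. The paper does not actually prove this proposition---it simply cites it as \cite[Lemma 3.2]{markovic-maroti-mckenzie-blockers}---so there is no in-paper argument to compare against; your argument is the standard one, and in fact the paper later reuses exactly your diagonal-matrix construction (placing elements of $C$ on the diagonal so every column meets $C$ while every row witnesses failure) in the proof of the lemma following Corollary~\ref{corIdempotentObstruction}.
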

 
 \begin{remark}\label{remTestBlocker}
  Proposition~\ref{propBlocker} immediately gives us an algorithm (first
  described in~\cite{markovic-maroti-mckenzie-blockers}) that decides in
  polynomial time if $(C,D)$ is a blocker for an algebra $\algA$ given by its
  idempotent basic operations: First test if $\emptyset\neq C\subsetneq D$ and $C, D\leq \algA$, then
  for every basic operation $f$ try out all the coordinates and see if
  $f(D,\dots,D,C,D,\dots,D)\subset C$ for some position of $C$. If we can find
  such a coordinate for all basic operations, then we have a blocker; else
  $(C,D)$ is not a blocker. The testing of coordinates can be implemented in time
  $O(m|\algA|)$, by passing through each table of each operation exactly once and
  keeping track of the coordinates that each tuple rules out (here $m$ is the maximum arity of a basic operation of $\algA$).
\end{remark}
  
  In the following, 
  we will need a more general version of cube term blocker.  Let $\emptyset\neq C_i\subsetneq D_i\subset A$ and $n_i\in \en$ where $i=1,2,\dots, k$. We then define
  the $(n_1+\dots+n_k)$-ary relation which we call a \emph{chipped cube} by
  \begin{align*}
    \left[\begin{matrix}
	C_1&|&D_1^{n_1}\\
	C_2&|&D_2^{n_2}\\
	   &\vdots&\\
	C_k&|&D_k^{n_k}
  \end{matrix}\right]=\left(\prod_{i=1}^k D_i^{n_i}\right)\setminus\left(
  \prod_{i=1}^k (D_i\setminus C_i)^{n_i}\right).
  \end{align*}
  The coordinates of a chipped cube naturally break down into \emph{blocks}:
  The $i$-th block, which we will often denote by $B_i$, consists of integers
  from $n_1+\dots+n_{i-1}+1$ to $n_1+\dots+n_{i-1}+n_i$ (inclusively).

\begin{remark}
  An important kind of relation employed in~\cite{kearnes-szendrei-cube-term-blockers} is a ``cross''. It turns out that crosses are exactly those chipped cubes where the sets $D_1,\dots,D_k$ are all equal to the whole universe of the algebra $\algA$.
\end{remark}
  We will sometimes omit unnecessary brackets as well as exponents equal to 1, so for example we have 
  \[
    \left[\begin{matrix}
	a&|&\{a,b\}^2\\
	c&|&c,d\\
    \end{matrix}\right]=\{a,b\}^2\times \{c,d\}\setminus \left\{\begin{pmatrix}
	b\\b\\d
    \end{pmatrix}\right\}.
  \]

  The following two observations are immediate consequences of the definition of an
  elusive relation:
  \begin{observation}\label{obsCutout}
    Let $(a_1,\dots,a_n)$ be an elusive tuple for some relation $R$ compatible
    with the algebra $\algA$. Then there exist elements $b_1,\dots,b_n$
    such that $(a_1,\dots,a_n)$ is an elusive tuple for the relation
    \[
    S=\Sg\left(\left[\begin{matrix}
	b_1|a_1,b_1\\
	b_2|a_2,b_2\\
	      \vdots\\
	b_n|a_n,b_n
    \end{matrix}\right]\right),
    \]
    and $S\subset R$.
  \end{observation}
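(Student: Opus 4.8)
The plan is to take for $b_1,\dots,b_n$ the entries of a tuple $\vect b$ that, together with $\vect a=(a_1,\dots,a_n)$, witnesses the elusiveness of $R$. So fix such a $\vect b$; that is, $\chi_I(\vect a,\vect b)\in R$ for every nonempty $I\subseteq[n]$ while $\vect a=\chi_\emptyset(\vect a,\vect b)\notin R$.

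Before anything else one should check that the chipped cube in the statement is well-defined, which requires $\{b_i\}\subsetneq\{a_i,b_i\}$, i.e.\ $a_i\neq b_i$, for every $i$. This is automatic: if $a_i=b_i$ for some $i$, then $\chi_{\{i\}}(\vect a,\vect b)=\vect a$, which must lie in $R$ since $\{i\}\neq\emptyset$, contradicting $\vect a\notin R$. With $a_i\neq b_i$ established, unwinding the definition of a chipped cube (all blocks of size $1$, with $C_i=\{b_i\}$ and $D_i=\{a_i,b_i\}$, so $D_i\setminus C_i=\{a_i\}$) gives
\[
T:=\left[\begin{matrix} b_1|a_1,b_1\\ \vdots\\ b_n|a_n,b_n\end{matrix}\right]=\left(\prod_{i=1}^n\{a_i,b_i\}\right)\setminus\{\vect a\}.
\]
Every element $\vect x\in T$ has $x_i\in\{a_i,b_i\}$ for all $i$ and $\vect x\neq\vect a$, hence $\vect x=\chi_I(\vect a,\vect b)$ for the set $I=\{i:x_i=b_i\}$, which is nonempty and (since $a_i\neq b_i$) uniquely determined; conversely $\chi_I(\vect a,\vect b)\in T$ for every nonempty $I$. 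Thus $T=\{\chi_I(\vect a,\vect b):\emptyset\neq I\subseteq[n]\}$.

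Now both claims are immediate. Since $\chi_I(\vect a,\vect b)\in R$ for all nonempty $I$ we have $T\subseteq R$, and because $R\leq\algA^n$ is a subuniverse it contains $S:=\Sg_{\algA^n}(T)$, so $S\subseteq R$. To see that $\vect a$ is an elusive tuple for $S$, use the same witness pair $(\vect a,\vect b)$: for every nonempty $I$ we have $\chi_I(\vect a,\vect b)\in T\subseteq S$, while $\vect a\notin S$ since $S\subseteq R$ and $\vect a\notin R$.

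There is no genuinely hard step here; the only point requiring a moment's care is noticing that any witness $\vect b$ of elusiveness automatically satisfies $a_i\neq b_i$ on every coordinate, which is exactly what makes the chipped cube $T$ a legal instance of the notation. Everything else is rewriting $T$ in terms of the operators $\chi_I$ and invoking that a relation compatible with $\algA$, being a subuniverse of $\algA^n$, is closed under $\Sg_{\algA^n}$.
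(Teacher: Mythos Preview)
Your proof is correct and is exactly the intended argument: the paper states that this observation is an ``immediate consequence of the definition of an elusive relation'' and gives no further proof, and you have simply spelled out those immediate details (choosing the witness $\vect b$, checking $a_i\neq b_i$, and identifying the chipped cube with $\{\chi_I(\vect a,\vect b):I\neq\emptyset\}$).
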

\begin{observation}\label{obsCompleteCross}
  Let $\algA$ be an algebra. Let 
  \[
    E= 
    \left[\begin{matrix}
	C_1&|&D_1^{n_1}\\
	C_2&|&D_2^{n_2}\\
	   &\vdots&\\
	C_k&|&D_k^{n_k}
  \end{matrix}\right]
  \]
  be a chipped cube where each $D_1,\dots,D_k$ is a subuniverse of $\algA$ and
  assume that the relation $\Sg(E)$ is \emph{not} elusive. Then 
  \[
	  \Sg(E)=D_1^{n_1}\times D_2^{n_2}\times\dots\times D_k^{n_k}.
  \]
  \end{observation}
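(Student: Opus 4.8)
The plan is to prove the two inclusions separately, the nontrivial one by manufacturing an elusive tuple. Write $N=n_1+\dots+n_k$ for the arity of the chipped cube $E$ and, for a coordinate $j\in[N]$, let $i(j)$ denote the index of the block $B_{i(j)}$ containing $j$. The inclusion $\Sg(E)\subseteq D_1^{n_1}\times\dots\times D_k^{n_k}$ is immediate: since each $D_i$ is a subuniverse of $\algA$, the set $D_1^{n_1}\times\dots\times D_k^{n_k}$ is a subuniverse of $\algA^N$ (a finite product of subuniverses, using Proposition~\ref{prop:Galois}(3) inductively), and it contains $E$ by the definition of a chipped cube; hence it contains the subuniverse $\Sg(E)$ generated by $E$.

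For the reverse inclusion I would argue by contradiction. Assume $\Sg(E)\subsetneq D_1^{n_1}\times\dots\times D_k^{n_k}$ and pick $\vect a\in\left(D_1^{n_1}\times\dots\times D_k^{n_k}\right)\setminus\Sg(E)$. Because each $C_i$ is nonempty, I can choose a tuple $\vect b\in C_1^{n_1}\times\dots\times C_k^{n_k}$. I claim the pair $(\vect a,\vect b)$ witnesses that $\Sg(E)$ is elusive. On the one hand $\chi_\emptyset(\vect a,\vect b)=\vect a\notin\Sg(E)$ by the choice of $\vect a$. On the other hand, fix a nonempty $I\subseteq[N]$; every entry of $\chi_I(\vect a,\vect b)$ is an entry of $\vect a$ or of $\vect b$, and both tuples lie in $D_1^{n_1}\times\dots\times D_k^{n_k}$, so $\chi_I(\vect a,\vect b)\in D_1^{n_1}\times\dots\times D_k^{n_k}$. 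Moreover, choosing any $j\in I$ and setting $i=i(j)$ we get $(\chi_I(\vect a,\vect b))_j=b_j\in C_i$, hence $(\chi_I(\vect a,\vect b))_j\notin D_i\setminus C_i$, so $\chi_I(\vect a,\vect b)\notin\prod_{i=1}^k (D_i\setminus C_i)^{n_i}$. Combining the last two sentences, $\chi_I(\vect a,\vect b)\in E\subseteq\Sg(E)$ for every nonempty $I$. Thus $\vect a$ is an elusive tuple for $\Sg(E)$, contradicting the hypothesis that $\Sg(E)$ is not elusive; therefore $D_1^{n_1}\times\dots\times D_k^{n_k}\subseteq\Sg(E)$, completing the proof.

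There is no real obstacle here; the argument is a short unwinding of the definitions. The only point that calls for a little care is verifying, for each nonempty $I$, the two defining conditions of $E$ for $\chi_I(\vect a,\vect b)$ — in particular noticing that a single index $j\in I$ already drives $\chi_I(\vect a,\vect b)$ out of the ``chipped-off'' corner $\prod_{i=1}^k (D_i\setminus C_i)^{n_i}$ because $b_j\in C_i$, which is exactly why choosing $\vect b$ coordinatewise inside the $C_i$'s works uniformly over all $I\neq\emptyset$.
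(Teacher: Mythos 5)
Your proof is correct and is exactly the "immediate consequence of the definition" that the paper has in mind when it declines to spell out an argument: the forward inclusion holds because the product of the subuniverses $D_i$ is a subuniverse containing $E$, and for the reverse you manufacture an elusive pair $(\vect a,\vect b)$ with $\vect b$ chosen coordinatewise in the $C_i$'s so that every $\chi_I(\vect a,\vect b)$ with $I\neq\emptyset$ falls in $E$, contradicting non-elusiveness. No gaps.
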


  \section{General cube term results}
  The following lemma is a variant of several previously known versions of the local-to-global arguments for idempotent algebras, see~\cite[Theorem 2.8]{markovic-maroti-mckenzie-blockers} and~\cite{horowitz-ijac}; the first condition is the global and the second the local one. The ``prefix'' $\seq A$ serves here to let us argue about general algebras as if they were idempotent.
  
  The key observation to use the previous local-to-global results for idempotent algebras is to note that the tuples of
  $\Sg(\{\vect u\colon \vect u=\seq{A}\vect
	  \chi_I(\vect a,\vect b),\, I\neq\emptyset\})$ that begin with $\seq{A}$ are generated using the idempotent reduct of $\algA$.
  
   \begin{lemma}\label{lemMatrix}
    Let $\algA$ be a finite algebra. Then the following are equivalent:
    \begin{enumerate}
      \item\label{itmDcube} $\algA$ has a cube term of dimension $d$,
      \item\label{itmMatrix} For all $\vect a,\vect b\in A^d$
	we have
	\[
	  \seq{A}\vect a\in\Sg(\{\vect u\colon \vect u=\seq{A}\vect
	  \chi_I(\vect a,\vect b),\, I\neq\emptyset\}).
\]
    \end{enumerate}
  \end{lemma}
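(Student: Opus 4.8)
The plan is to prove the two implications separately, using the translation between operations and relations that is standard in this area (the ``trick'' is exactly the one described in the paragraph preceding the lemma: prepend $\seq A$ to everything so that the relevant subuniverse is generated by idempotent operations).

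First I would prove \eqref{itmDcube}$\Rightarrow$\eqref{itmMatrix}. Suppose $t$ is a cube term of $\algA$ of dimension $d$, so $t$ is a $(2^d-1)$-ary term operation with $t(\chi_I(\vect a,\vect b)\colon \emptyset\neq I\subset[d])=\vect a$ for all $\vect a,\vect b\in A^d$. Fix $\vect a,\vect b\in A^d$. Consider the $(|A|+d)\times(2^d-1)$ matrix $M$ whose columns are the tuples $\seq A\,\chi_I(\vect a,\vect b)$ for $\emptyset\neq I\subset[d]$, in some fixed order. Every column of $M$ lies in the generating set on the right-hand side of \eqref{itmMatrix}, so $t(M)$ lies in that subuniverse. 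Applying $t$ row-wise, the last $d$ rows give $t(\chi_I(\vect a,\vect b)\colon I\neq\emptyset)=\vect a$ by the cube term identity; the first $|A|$ rows give $\seq A$ because $t$, being a term of $\algA$, evaluates each constant tuple $c^{\mathbf{2^d-1}}$ (the row of $M$ coming from a repeated entry of $\seq A$) to $c$ — wait, more precisely the $i$-th of the first $|A|$ rows is the constant tuple $(\seq A)_i^{\mathbf{2^d-1}}$, and $t$ applied to a constant tuple returns that constant since $t$ is a term operation and cube terms are (by the defining identity with $\vect a=\vect b$) idempotent. Hence $t(M)=\seq A\,\vect a$, which proves \eqref{itmMatrix}.

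For the converse \eqref{itmMatrix}$\Rightarrow$\eqref{itmDcube}, I would run the membership hypothesis ``in parallel'' over all choices of $\vect a,\vect b$. Enumerate all pairs $(\vect a,\vect b)\in A^d\times A^d$ as $(\vect a^{(1)},\vect b^{(1)}),\dots,(\vect a^{(N)},\vect b^{(N)})$ with $N=|A|^{2d}$. For each nonempty $I\subset[d]$ form the ``long'' tuple $\vect w_I$ of arity $N(|A|+d)$ obtained by concatenating $\seq A\,\chi_I(\vect a^{(j)},\vect b^{(j)})$ over $j=1,\dots,N$. By hypothesis applied coordinatewise in each block, the tuple obtained by concatenating $\seq A\,\vect a^{(j)}$ lies in $\Sg(\{\vect w_I\colon I\neq\emptyset\})$ (membership is preserved under taking products/concatenations of instances, since a term witnessing membership in each block can be taken uniformly — this is where one must be a little careful and instead argue directly: a single term $t$, namely the one produced below, works). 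Concretely: because the target tuple lies in the subuniverse generated by the $\vect w_I$, there is a $(2^d-1)$-ary term $t$ of $\algA$ with $t(\vect w_I\colon I\neq\emptyset)=(\seq A\,\vect a^{(1)})\cdots(\seq A\,\vect a^{(N)})$. Restricting to the block corresponding to index $j$ and discarding the $\seq A$ prefix gives $t(\chi_I(\vect a^{(j)},\vect b^{(j)})\colon I\neq\emptyset)=\vect a^{(j)}$ for every $j$; since $(\vect a^{(j)},\vect b^{(j)})$ ranges over all pairs in $A^d\times A^d$, $t$ is a cube term of dimension $d$. (The $\seq A$ prefix was not strictly needed in this direction, but it is harmless.)

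The main obstacle is the logical quantifier order: \eqref{itmMatrix} asserts, for each pair $(\vect a,\vect b)$ \emph{separately}, the existence of a term witnessing membership, whereas \eqref{itmDcube} needs \emph{one} term $t$ that works for all pairs simultaneously. The standard resolution — and the step I would be most careful to get right — is the ``diagonal'' trick just described: generate the subuniverse of $\algA^{N(|A|+d)}$ (not $\algA^{|A|+d}$) spanned by the concatenated generators, so that a single term appears in the membership certificate and then specializes to each block. One should also note explicitly that $d\geq 1$ and that adding dummy variables lets a cube term of smaller dimension count as one of dimension $d$, so that in the first implication one need not worry about whether $t$ uses all its variables.
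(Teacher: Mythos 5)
Your first implication \eqref{itmDcube}$\Rightarrow$\eqref{itmMatrix} is correct and matches what the paper does. The converse, however, has a genuine gap that your own parenthetical remark half-acknowledges and then papers over. The step ``By hypothesis applied coordinatewise in each block, the tuple $(\seq A\,\vect a^{(1)})\cdots(\seq A\,\vect a^{(N)})$ lies in $\Sg(\{\vect w_I\colon I\neq\emptyset\})$'' does not follow from \eqref{itmMatrix}. Hypothesis \eqref{itmMatrix} gives, for each $j$, \emph{some} $(2^d-1)$-ary term $t_j$ with $t_j(\seq A\chi_I(\vect a^{(j)},\vect b^{(j)})\colon I)=\seq A\vect a^{(j)}$, but these terms may differ with $j$. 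Membership of the concatenated target in the subuniverse generated by the concatenated $\vect w_I$'s is, by definition, exactly the statement that one term $t$ works simultaneously for all $j$ --- which is the $\exists\forall$ conclusion you are trying to reach. So when you write ``a single term $t$, namely the one produced below, works,'' the production ``below'' presupposes the membership fact, which presupposes the uniform term: the argument is circular. You also cannot rescue it by appealing to \eqref{itmMatrix} at dimension $Nd$: the family $\{\vect w_I\colon I\subset[d],\ I\neq\emptyset\}$ has only $2^d-1$ members and corresponds to the \emph{block-uniform} subsets of $[Nd]$, a proper subset of the $2^{Nd}-1$ generators that \eqref{itmMatrix} at dimension $Nd$ would put at your disposal, so the bigger-$d$ instance of the hypothesis is not applicable.

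What is actually needed here is a genuine local-to-global argument, not a diagonalization. The paper discharges this by citing Horowitz: after noting that \eqref{itmMatrix} is equivalent to the analogous condition over the idempotent reduct $\algA_{id}$ (because terms that map $\seq A$ to $\seq A$ are exactly the idempotent ones), it observes that this is Horowitz's notion of having all local $E$-special cube terms on sets of size $d$, verifies that the associated matrix satisfies the DCP hypothesis, and then invokes his local-to-global theorem to produce a single global cube term. An elementary alternative (the one the authors originally drafted) is an induction on block sizes: one shows that a fixed local witness can be bootstrapped by repeatedly replacing a single coordinate, using the idempotence forced by the $\seq A$ prefix, until the witness applies to arbitrary large blocks and hence to a universally chosen pair $\vect a,\vect b$ enumerating all of $A^2$ in each block. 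Either route requires real work precisely at the point your proposal asserts ``a single term... works'' without proof.
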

  \begin{proof}
  By applying a cube term on the subalgebra in~\ref{itmMatrix} it is immediate to see that (\ref{itmDcube}) implies (\ref{itmMatrix}).
  
  For the other implication, we essentially just cite~\cite{horowitz-ijac}. Let $\algA_{id}$ be the idempotent reduct of $\algA$ -- the algebra on $A$ whose basic operations are all the idempotent operations from the clone of $\algA$. It is easy to see that the condition (\ref{itmMatrix}) is equivalent to 
  \begin{enumerate}
      \item[(3)]\label{itmThree} For all $\vect a,\vect b\in A^d$
	we have
	$
	  \vect a\in\Sg_{\algA_{id}}(\{\chi_I(\vect a,\vect b)\colon I\neq\emptyset\}).$
	  \end{enumerate}
	  
	  Let $E$ be the $d\times (2^{d}-1)$ matrix with columns indexed by nonempty subsets of $[n]$ whose $I$-th column is $\chi_I(y,x)$ for variable symbols $x,y$. Then the condition~(\ref{itmThree}) on $\algA_{id}$ is what~\cite[Definition 2.2]{horowitz-ijac} calls having all local $E$-special cube terms on sets of size $d$. It is easy to verify that our $E$ satisfies the DCP~\cite[Definition 2.5]{horowitz-ijac} and hence using~\cite[Theorem 2.6]{horowitz-ijac} the matrix $E$ has the local-to-global property of size $d$~\cite[Definition 2.3]{horowitz-ijac} and hence if $\algA_{id}$ satisfies (\ref{itmThree}) (or~\ref{itmMatrix}) then $\algA_{id}$ has a cube term. Since the clone of operations of $\algA_{id}$ is a subset of the clone of operations of $\algA$, it follows that $\algA$ also has a cube term.
  \end{proof}
\textremovalmachine{  
  \begin{proof}
    If $\algA$ has a $d$-dimensional cube term $t$ then consider the matrix $M$
    that we obtain by prefixing each column of the matrix $(\chi_I(\vect a,\vect b)\colon I\neq \emptyset)$ 
    by $\seq{A}$. Since cube terms are idempotent, it is easy to see that all columns of $M$ are of the
    form $\seq{A}\vect \chi_I(\vect a,\vect b)$ for some $I\neq\emptyset$ and
    that $t(M)=\seq{A}\vect a$.

    In the other direction, assume that the condition (\ref{itmMatrix}) holds. Then we can
    bootstrap our way to a stronger version of (\ref{itmMatrix}):
    \begin{claim}\label{clmExpand}
	    Assume part~(\ref{itmMatrix}) (of
	    Lemma~\ref{lemMatrix}). Then for all $n_1,\dots,n_d\in
    \en$ and all $\vect a,\vect b \in A^{n_1+n_2+\dots+n_d}$  we have that 
	    \[
	  \seq{A}\vect a\in\Sg\left(\left\{\seq{A}\vect
	    \chi_J(\vect a,\vect b)\colon \exists I,\,
	    \emptyset\neq I\subset [d], J=\bigcup_{i\in I} B_i\right\}\right)
		    \]
    where $B_i=\{j\colon n_1+\dots+n_{i-1}+1 \leq j\leq n_1+\dots+n_i
	    \}$ is the $i$-th block of $[n_1+\dots+n_d]$.
    \end{claim}
    For $n_1=\dots=n_d=1$ this is exactly (\ref{itmMatrix}).
If we manage to prove the above claim then we will get (\ref{itmDcube}) by
choosing $n_1=n_2=\dots=n_d=|A|^2$ and choosing $\vect a$ and
	  $\vect b$ so that $\{(a_j,b_j)\colon j\in B_i\}=A^2$
	  for each $i=1,\dots,d$.

    It remains to prove Claim~\ref{clmExpand}. We proceed by induction on $n_1+n_2+\dots+n_d$.
    We already know that
    the statement is true for $n_1+\dots+n_d=d$. Assume now that the
    statement is true whenever $n_1+\dots+n_d<n$ and consider 
    $\vect a,\vect b\in A^{n}$ with $n_1+\dots+n_d=n$. Let us
	  denote by $E$ the set of generators
    \[
 E= \left\{\seq{A}\vect
	  \chi_J(\vect a,\vect b)\colon
	  \exists I,\,
	    \emptyset\neq I\subset [d], J=\bigcup_{i\in I} B_i\right\}.
    \] 
    We will view $E$ as a matrix whose columns are indexed by 
	  sets $I$ such that $\emptyset \neq I\subset [d]$.

    Without loss
	  of generality let $n_d\geq 2$. Pick $q=n_1+\dots+n_{d-1}+1$ so that 
	  $a_{q}$ is the first entry of the last block of $\vect a$.

	  Considering the projection of $A^{n}$ to coordinates $[n]\setminus
	  \{q\}$ and
    applying the induction hypothesis (taking $\vect a_{[1,q-1]}\vect a_{[q+1,n]}$ and $\vect
    b_{[1,q-1]}\vect b_{[q+1,n]}$ instead of $\vect a$ and $\vect b$), we obtain that there exists an $e\in A$ such that
    \[
      \seq{A}\vect a_{[1,q-1]}e\vect {a}_{[q+1,n]}\in \Sg(E).
  \]
  Therefore, there is a term $t$ such that
	  $t(E)=\seq{A}\vect a_{[1,q-1]}e\vect {a}_{[q+1,n]}$. Observe that $t$ is
	  idempotent since it maps $\seq{A}$ to $\seq{A}$. Our next goal is to produce a family of tuples in $\Sg(E)$ that will let us meaningfully apply the induction hypothesis with blocks $B_1,\dots,B_{d-1},\{q\}$.

      For each $K\subset \{1,2,3,\dots,d-1\}$ we consider the matrix $F_K$ that we obtain from $E$ by replacing, in each column, blocks of $\vect a$ by corresponding
	  blocks of $\vect b$ as prescribed by $K$. To be more precise, we let 
	  $L(K)=\bigcup_{i\in K} B_i$ and replace
	  each column $\seq{A}\chi_J(\vect a,\vect b)$ of $E$ by
	  $\seq{A}\chi_{J\cup L(K)}(\vect a,\vect b)$. 
	  The columns of $F_K$ lie in $E$. Since $t$ is idempotent, we have for each
	  $K\subset\{1,2,\dots,d-1\}$ that
     \[
       t(F_K)= \seq{A}\chi_{L(K)}(\vect a_{[1,q-1]}e\vect {a}_{[q+1,n]},
\vect b_{[1,q-1]}e\vect {a}_{[q+1,n]})\in \Sg(E).
    \]
  From this, it follows that $\Sg(E)$ contains all tuples of the form
  \[
\seq{A}\chi_{L}(\vect a_{[1,q]},
\vect b_{[1,q-1]}e)\vect a_{[q+1,n]}
    \]
  where $L$ is a nonempty union of some of the blocks
  $B_1,B_2,\dots,B_{d-1},\{q\}$. Applying the induction hypothesis (with the sum of
  block sizes $n_1+\dots+n_{d-1}+1<n$), we get that $\Sg(E)$ contains the tuple
  \[
    \seq{A}\vect a_{[1,q]}\vect a_{[q+1,n]}=\seq{A}\vect a,
    \]
  finishing the proof. (Here we used the fact that membership of tuples
  beginning with $\seq{A}$ in $\Sg(E)$ is witnessed by idempotent terms, so
  the suffix $\vect a_{[q+1,n]}$ is not changed by using the induction hypothesis).
  \end{proof}
}

The following lemma sheds some light on what minimal compatible elusive relations look
like. The additional assumption that we are dealing with a
chipped cube will be justified later in Lemma~\ref{lemCCbe}.

  \begin{lemma}\label{lemCubeNotes}
    Let $\algA$ be an algebra, $R\subset A^n$ be inclusion minimal among elusive
    relations compatible with $\algA$ (that is, no proper subset of $R$ is both an elusive relation and compatible with $\algA$). Assume moreover that $R$ is equal to the
    chipped cube
    \[
\left[\begin{matrix}
	      C_1&|&D_1\\
		 &\vdots\\
	      C_{n}&|&D_{n}\\
	\end{matrix}\right].
      \]
Let $\vect a=(a_1,\dots,a_n)$ and $\vect b=(b_1,\dots,b_n)$
    be two tuples witnessing the elusiveness of $R$. 
      Then:
      \begin{enumerate}[(a)]
	\item \label{itmGenerators}$R=\Sg(\{\chi_I(\vect a,\vect b)\colon \emptyset\neq I\subset[n]\})$,
  \item\label{itmTight} there is no subuniverse $E$ of $\algA$ such that $C_i\subsetneq
    E\subsetneq D_i$ for some $i$; in particular $D_i=\Sg(a_i,b_i)$ for each
    $i$,
	\item \label{itmSymmetry}if $i,j$ are such that $(a_i,b_i)=(a_j,b_j)$, then $(C_i,D_i)=(C_j,D_j)$,
	\item \label{itmIntersect} if $D_i=D_j$ for some $i$ and $j$ then
	  either $C_i=C_j$ or $C_i\cap
	  C_j=\emptyset$.
      \end{enumerate}
   \end{lemma}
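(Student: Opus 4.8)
The plan is to derive all four statements from the minimality of $R$, i.e.\ from the fact that no relation compatible with $\algA$, contained in $R$ and still elusive can be a proper subset of $R$; the tools are the closure of compatible relations under intersections, finite products and projections from Proposition~\ref{prop:Galois}. I will repeatedly use two elementary consequences of elusiveness: from $\vect a=\chi_\emptyset(\vect a,\vect b)\notin R$ we get $a_i\in D_i\setminus C_i$ for all $i$, and from $\chi_{\{i\}}(\vect a,\vect b)\in R$ --- a tuple all of whose coordinates except the $i$-th are some $a_k\notin C_k$ --- we get $b_i\in C_i$ for all $i$.

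For part~\ref{itmGenerators}, set $S=\Sg(\{\chi_I(\vect a,\vect b)\colon\emptyset\neq I\subseteq[n]\})$. Then $S\subseteq R$, $S$ is compatible with $\algA$, and $S$ is elusive with the very same witnessing pair $\vect a,\vect b$ (the tuples $\chi_I(\vect a,\vect b)$, $I\neq\emptyset$, lie in $S$ by construction, and $\vect a\notin R\supseteq S$), so minimality forces $S=R$. For part~\ref{itmTight} I would first show $D_i=\Sg(a_i,b_i)$: the projection of $R$ onto coordinate $i$ is all of $D_i$ (fill coordinate $i$ with any element of $D_i$ and the rest with $b_k\in C_k$), while by part~\ref{itmGenerators} this projection equals $\Sg$ of the projections of the generators, namely $\Sg(\{a_i,b_i\})$. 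Then, if some subuniverse $E$ satisfied $C_i\subsetneq E\subsetneq D_i$ (say $i=n$), the relation $R'=R\cap(D_1\times\dots\times D_{n-1}\times E)$ would be compatible, contained in $R$, equal to the chipped cube obtained from $R$ by replacing $D_n$ with $E$, still elusive (witnessed by $\vect b$ and by $\vect a$ modified to have an element of $E\setminus C_n$ in coordinate $n$), and a \emph{proper} subset of $R$ (any $d\in D_n\setminus E$ together with $b_k$ elsewhere gives a tuple of $R\setminus R'$) --- contradicting minimality. (For $n=1$ the assertion about $D_i$ is degenerate and $R$ is simply a minimal nonempty proper subuniverse.)

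For part~\ref{itmSymmetry}, when $(a_i,b_i)=(a_j,b_j)$ the transposition of coordinates $i$ and $j$ maps the generating set of part~\ref{itmGenerators} onto itself --- it sends $\chi_I(\vect a,\vect b)$ to $\chi_{I'}(\vect a,\vect b)$ where $I'$ is $I$ with the membership of $i$ and $j$ swapped, so $|I'|=|I|$ and $I'\neq\emptyset\Leftrightarrow I\neq\emptyset$ --- hence $R$ is invariant under this transposition. Comparing projections gives $D_i=D_j$. For $C_i=C_j$: given $c\in C_i$, the tuple equal to $\vect a$ except with $c$ in coordinate $i$ lies in $R$; transposing coordinates $i$ and $j$ produces a tuple of $R$ whose coordinates outside position $j$ are all of the form $a_k\notin C_k$ (here one uses $a_i=a_j$), so this tuple can lie in $R$ only because its $j$-th entry $c$ belongs to $C_j$; thus $C_i\subseteq C_j$, and the reverse inclusion is symmetric.

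Part~\ref{itmIntersect} is the step I expect to cost the most. Suppose $D_i=D_j=:D$ and $C_i\cap C_j\neq\emptyset$; I want $C_i=C_j$. The decisive point is that the witness $\vect a,\vect b$ may be chosen freely subject only to $a_k\in D_k\setminus C_k$ and $b_k\in C_k$, so --- using $C_i\cap C_j\neq\emptyset$ --- I may arrange $b_i,b_j\in C_i\cap C_j$. Let $R^\sigma$ be $R$ with coordinates $i$ and $j$ interchanged; since $D_i=D_j$ this is again a chipped cube (with $C_i$ and $C_j$ swapped) and it is compatible with $\algA$. Then $R\cap R^\sigma$ is compatible and contained in $R$, and it is still elusive with witness $\vect a,\vect b$: for $\emptyset\neq I\subseteq[n]$ pick $l\in I$; if $l\notin\{i,j\}$ then coordinate $l$ of $\chi_I(\vect a,\vect b)$ is $b_l\in C_l$, if $l=i$ then coordinate $i$ is $b_i\in C_i\cap C_j$, if $l=j$ then coordinate $j$ is $b_j\in C_i\cap C_j$, and in each case this one coordinate keeps the tuple outside the hole of both $R$ and $R^\sigma$, while $\vect a\notin R\supseteq R\cap R^\sigma$. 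Minimality then gives $R\cap R^\sigma=R$, i.e.\ $R\subseteq R^\sigma$, i.e.\ the hole of $R^\sigma$ is contained in that of $R$; since $\prod_{k\neq i,j}(D_k\setminus C_k)\neq\emptyset$, comparing the factors in coordinates $i$ and $j$ yields $D\setminus C_j\subseteq D\setminus C_i$ and $D\setminus C_i\subseteq D\setminus C_j$, whence $C_i=C_j$. The reason this is the hard part is that the naive candidate subrelation --- the chipped cube obtained by shrinking $C_i$ to $C_i\cap C_j$ --- need not be compatible with $\algA$, so one must instead obtain the competitor as an intersection of two relations already known to be compatible, and that is precisely what makes the special choice of witness necessary.
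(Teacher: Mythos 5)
Your proof is correct and follows essentially the same route as the paper: minimality gives (a), coordinate restriction gives (b), and the transposition $\sigma$ of coordinates $i,j$ is the key tool for (c) and (d). The only stylistic differences are that in (c) you argue via projections and a single explicit tuple instead of the paper's two-dimensional cross-section $R'=C_i\times D_j\cup D_i\times C_j$, and in (d) you re-choose the witnessing pair and conclude $R\subseteq R^\sigma$ by minimality (then compare holes) rather than exhibiting an auxiliary elusive tuple to show $R\cap R^\sigma\subsetneq R$ and derive a contradiction --- logically these are the contrapositives of each other.
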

  \begin{proof}
    Note that for each $i$ we have $a_i\in D_i\setminus C_i$ and $b_i\in C_i$.

    Part~(\ref{itmGenerators}) follows from the fact that $\Sg(\{\chi_I(\vect a,\vect b)\colon
    \emptyset\neq I\subset[n]\})$ is the smallest compatible relation that
    contains all the tuples that witness the elusiveness of $R$.

    The proof of point~(\ref{itmTight}) is similar. Were there $E$ strictly between $C_i$ and
    $D_i$, we could restrict the $i$-th coordinate of $R$ to $E$ and obtain a
    smaller elusive compatible relation, proving~(\ref{itmTight}). 

    To see that~(\ref{itmSymmetry}) holds, take $i$ and $j$ such that $a_i=a_j$ and
    $b_i=b_j$. Without loss of generality let $i<j$. Then the set of generators of $R$ is invariant under the
    permutation that swaps $i$-th and $j$-th coordinates. Therefore, $R$ is
    invariant under such a permutation of coordinates as well. Consider now
    \[
      R'=\{(e,f)\colon
      (a_1,\dots,a_{i-1},e,a_{i+1},\dots,a_{j-1},f,a_{j+1},\dots,a_n)\in R\}.
      \]
    Since $R$ is a chipped cube, it follows that $R'=C_i\times D_j\cup
    D_i\times C_j$ and from the symmetry of $R$, we get that $R'$ is symmetric
    as a binary relation. It is straightforward to verify that this can only
    happen when $C_i=C_j$ and $D_i=D_j$.

    To prove part~(\ref{itmIntersect}), assume (without loss of generality) that $D_1=D_2=D$ and there exist $e\in C_1\setminus C_2$ and $c\in C_1\cap C_2$. 
    
    Consider then the chipped cube we get from $R$ by
    switching the first two coordinates:
\[R'=
\left[\begin{matrix}
    	      C_2&|&D\\
	      C_1&|&D\\
              C_3&|&D_3\\
		 &\vdots\\
	      C_{n}&|&D_{n}\\
	\end{matrix}\right].
      \]
    By symmetry, $R'$ is a relation compatible with $\algA$. This makes $R\cap R'$ also compatible with $\algA$. We now claim that $R\cap R'$ is an elusive relation that is strictly smaller than $R$. To see $R\cap R'\subsetneq R$, choose  $d_i\in D_i\setminus C_i$ for $i=2,\dots,n$ and observe that $R$ contains the tuple $ed_2d_3\cdots d_n$, while $R\cap R'$ does not.
    To prove that $R\cap R'$ is elusive, choose $c_i\in C_i$ for $i=3,4,\dots,n$ and observe that $R\cap R'$ contains the tuple $\chi_I(e d_2 d_3\cdots d_n,c c c_3\cdots c_n)$ for any $I\neq \emptyset$, but not for $I=\emptyset$. This concludes the proof of the last point.
  \end{proof}
In the proof above we made progress thanks to  swapping two coordinates of $R$.
Later in the paper, we will be working with a general mapping that moves
coordinates of tuples around.

  \section{Cube terms in idempotent algebras}\label{secCTBidemp}
  
  \begin{lemma}\label{lemCCbe}
    Let $\algA$ be an idempotent algebra, $R$ a relation that is inclusion minimal among elusive relations compatible with $\algA$. Then $R$ is a chipped cube.
  \end{lemma}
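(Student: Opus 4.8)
The plan is to use minimality to normalise $R$, then to observe that ``$R$ is a chipped cube'' is the same as saying that the complement of $R$ inside a suitable box is itself a box, and finally to prove that last statement by a minimality argument of the kind used in part~(\ref{itmIntersect}) of Lemma~\ref{lemCubeNotes}.

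\emph{Normalisation.} If $n=1$, elusiveness forces $a_1\neq b_1$ with $b_1\in R$ and $a_1\notin R$, and idempotence gives $R=\{b_1\}$, which is a chipped cube; so from now on assume $n\geq 2$. Let $\vect a,\vect b\in A^n$ witness elusiveness of $R$. The relation $S:=\Sg(\{\chi_I(\vect a,\vect b)\colon\emptyset\neq I\subseteq[n]\})$ is compatible with $\algA$, contained in $R$, and still elusive via $\vect a,\vect b$ (since $\vect a\notin R\supseteq S$); minimality gives $R=S$. Hence $\pi_i(R)=\Sg(a_i,b_i)=:D_i$ for each $i$ (the $i$-th entries of the generators are exactly $a_i$ and $b_i$, using $n\geq 2$), so $\bar R:=D_1\times\dots\times D_n$ is compatible with $\algA$ by Proposition~\ref{prop:Galois}, contains $R$, and $\vect a\in\bar R\setminus R$.

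\emph{Reduction.} Set $E_i:=\pi_i(\bar R\setminus R)$ and $C_i:=D_i\setminus E_i$. From the definition of $E_i$ one immediately gets that every tuple of $\bar R$ having some coordinate $i$ with $r_i\in C_i$ already lies in $R$. Therefore $R$ coincides with the chipped cube determined by the pairs $C_i\subsetneq D_i$ (that is, $R=(D_1\times\dots\times D_n)\setminus(E_1\times\dots\times E_n)$) precisely when $R\cap(E_1\times\dots\times E_n)=\emptyset$, equivalently when $\bar R\setminus R$ equals the box $E_1\times\dots\times E_n$. Granting this, the side conditions required of a chipped cube hold: $a_i\in E_i$ gives $C_i\subsetneq D_i$; if some $E_j$ were equal to $D_j$ then the tuple $\chi_{\{j\}}(\vect a,\vect b)\in R$ would also lie in $E_1\times\dots\times E_n=\bar R\setminus R$ (using $a_k\in E_k$ for all $k$), which is impossible, so $C_i\neq\emptyset$ for all $i$; and from the box shape one reads $C_i=\{d\in D_i\colon(a_1,\dots,a_{i-1},d,a_{i+1},\dots,a_n)\in R\}$, which is a subuniverse of $\algA$ by Proposition~\ref{prop:Galois} and idempotence. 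So the whole lemma reduces to proving that $\bar R\setminus R$ is a box.

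\emph{The box property, and the main obstacle.} Being a box is equivalent to being closed under coordinatewise mixing: for all $\vect r,\vect s\in\bar R\setminus R$ and all $P\subseteq[n]$, the tuple $\vect t$ with $t_i=r_i$ for $i\in P$ and $t_i=s_i$ for $i\notin P$ again lies in $\bar R\setminus R$. A tool available here is that $R$ is closed under ``collapsing to $\vect b$'': writing $\vect r\in R$ as $g(\chi_{I_1}(\vect a,\vect b),\dots,\chi_{I_m}(\vect a,\vect b))$ for a term operation $g$ and replacing each $I_j$ by $(I_j\cap U)\cup([n]\setminus U)$ (which stays nonempty), idempotence of $g$ shows that the tuple equal to $\vect r$ on $U$ and to $\vect b$ off $U$ is again in $R$. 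Now suppose for contradiction that $\vect r,\vect s\in\bar R\setminus R$ but the mixed tuple $\vect t$ above belongs to $R$ for some $P$. The hard part --- the crux of the whole argument --- is to manufacture from $\vect r,\vect s,\vect t$ a \emph{proper} subset of $R$ that is still an elusive relation compatible with $\algA$, contradicting minimality of $R$. I expect to do this by intersecting $R$ with a pullback of $R$ (or of $\bar R$) along a map that permutes and identifies coordinates between $P$ and $[n]\setminus P$, exactly the mechanism by which swapping two coordinates and intersecting cut $R$ down in the proof of Lemma~\ref{lemCubeNotes}(\ref{itmIntersect}); the ``collapse to $\vect b$'' closure together with the witness tuples $\vect a,\vect b$ should then let one check that the resulting relation is genuinely smaller yet still elusive. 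Once the box property is established, Normalisation and Reduction finish the proof.
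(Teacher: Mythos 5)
Your normalisation and reduction are correct: after replacing $R$ by $\Sg(\{\chi_I(\vect a,\vect b)\colon I\neq\emptyset\})$ via minimality, setting $D_i=\Sg(a_i,b_i)=\pi_i(R)$, $\bar R=\prod_i D_i$, $E_i=\pi_i(\bar R\setminus R)$ and $C_i=D_i\setminus E_i$, the lemma is indeed equivalent to the statement that $\bar R\setminus R$ is the full box $E_1\times\dots\times E_n$, and your verification that $\emptyset\neq C_i\subsetneq D_i$ (granting the box shape) is fine.

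The gap is exactly where you flag it: you do not prove the box property, you only outline a strategy you ``expect'' to work. That strategy --- intersect $R$ with a pullback of $R$ along a map that identifies coordinates of $P$ with coordinates of $[n]\setminus P$ --- is not just missing details; it is not clear it can work at all. The ``swap and intersect'' step in Lemma~\ref{lemCubeNotes}(\ref{itmIntersect}) uses the already-established chipped-cube shape of $R$ to write down the second relation $R'$ explicitly and to check that $R\cap R'$ is both strictly smaller and still elusive; here $P$ and $[n]\setminus P$ are arbitrary, the blocks have no compatible $(C_i,D_i)$-structure yet, and nothing in your sketch explains why the intersection should remain elusive (elusiveness of $R$ does not pass to subrelations in general, which is precisely why the paper works with the non-elusiveness of a proper subrelation rather than with a smaller elusive one). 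So as written the crux of the argument is absent.

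For comparison, the paper attacks the problem from inside rather than from the complement: it fixes an inclusion-maximal chipped cube $E=[C_1|D_1;\dots;C_n|D_n]\subseteq R$ with $a_i\in D_i$, uses maximality to show each $D_i$ is a subuniverse, deduces $R=\Sg(E)$ and $\pi_i(R)=D_i$ from minimality of $R$, and then, assuming $E\subsetneq R$, picks $\vect u\in R\setminus E$ agreeing with $\vect a$ on as many coordinates as possible. After reordering so that $(a_1,u_2,\dots,u_n)\notin R$, it sets $D'=\Sg(C_1\cup\{u_1\})\subsetneq D_1$ and considers $R'=R\cap(D'\times D_2\times\dots\times D_n)$; by minimality $R'$ is \emph{not} elusive, so by Observation~\ref{obsCompleteCross} it equals $D'\times D_2\times\dots\times D_n$, which forces $\{u_1\}\times D_2\times\dots\times D_n\subseteq R$ and lets $C_1$ be enlarged to $C_1\cup\{u_1\}$, contradicting maximality of $E$. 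The decisive tool is thus ``a strictly smaller compatible relation containing a chipped cube must be the full product,'' not an intersection argument; if you want to complete your proof along your lines, you could likewise pick the offending $\vect t\in R\cap(E_1\times\dots\times E_n)$ to agree with $\vect a$ on a maximal set of coordinates and run the paper's extremal argument, but the pullback-and-intersect mechanism as you describe it does not obviously produce a smaller \emph{elusive} relation.
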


  \begin{proof}
    Given that $R$ is minimal, our strategy will be to fit a \emph{maximal} chipped cube 
     into $R$ and show that this chipped cube is equal to $R$.

     Let $(a_1,\dots,a_n)$ be an elusive tuple for $R$.
     Let moreover 
\[
  E=  \left[\begin{matrix}
	      C_1&|&D_1\\
		 &\vdots\\
	      C_{n}&|&D_{n}\\
	\end{matrix}\right]
\]
     be an inclusion-maximal chipped cube such that (1) $E\subset R$ and (2) $a_i\in D_i$ 
     for each  $i=1,2,\dots,n$. (At least one such chipped cube exists by
     Observation~\ref{obsCutout}.)

     We prepare ground for our proof by exploring properties of the sets $D_i$.
     From the maximality of $E$, it follows that  
     each of the sets $D_i$ is a subuniverse of $\algA$: Were, say, $u\in
     \Sg(D_1)\setminus D_1$ then $u=t(d_1,\dots,d_m)$ for some operation $t$ of
     $\algA$ and suitable $d_1,\dots,d_m\in D_1$. Take any $e_2,\dots,e_n\in A$ such that
     for each $i$ we have $e_i\in D_i$ and for at least one $i\in\{2,\dots,n\}$ we have
     $e_i\in C_i$. By definition of a chipped cube, we have $(d_j,e_2,\dots,e_n)\in E$ for each $j=1,\dots,m$; applying $t$ thus gives 
     us $(u,e_2,\dots,e_n)\in \Sg(E)$. Therefore $\Sg(E)\subset R$ 
     contains the chipped cube
\[
  F=  \left[\begin{matrix}
      C_1&|&D_1\cup \{u\}\\
      C_2&|&D_2\\
		 &\vdots\\
	      C_{n}&|&D_{n}\\
	\end{matrix}\right],
\]
a contradiction with the maximality of $E$.

    From the minimality of $R$, we immediately get $R=\Sg(E)$ and what is more
    $\pi_i(R)=\Sg(C_i\cup\{a_i\})$ for each $i$. The latter equality
     follows from the fact that
     $R\cap \prod_{i=1}^n \Sg(C_i\cup\{a_i\})$ is an elusive relation compatible with $\algA$ and 
     we chose $R$ to be a minimal elusive relation compatible with $\algA$.
     Since for each $i$ we have $\Sg(C_i\cup\{a_i\})\subset D_i \subset
     \pi_i(R)$ and the outer pair of sets is equal, it follows that actually
     $D_i=\Sg(C_i\cup \{a_i\})=\pi_i(R)$ for each $i$.

     We are now ready to show that $E=R$. Assume otherwise and 
     choose $\vect u\in R\setminus E$ that
     agrees with $(a_1,a_2,\dots,a_n)$ on as many coordinates as possible.
     Up to reordering of coordinates, we thus have a tuple $\vect u=(u_1,u_2,\dots,u_n)\in
     R\setminus E$ such that $(a_1,u_2,\dots,u_n)\not\in R$.
     
     Since $\vect u\in R\setminus E$, we see that
     $u_i\in D_i\setminus C_i$ for all $i=1,\dots,n$. We will show that $R$ contains the
     chipped cube
\[
      \left[\begin{matrix}
              C_1\cup \{u_1\}&|&D_1\\
              C_2&|&D_2\\
              C_3&|&D_3\\
	 &\vdots\\
	      C_{n}&|&D_{n}\\
	\end{matrix}\right],
\]
yielding a contradiction with the maximality of $E$. To prove this, we need to show that 
$\{u_1\}\times D_2\times\dots\times D_n\subset R$.

 Observe first that
$\Sg(C_1\cup\{u_1\})\times\{(u_2,\dots,u_n)\}\subset R.$
   This follows from the inclusion $(C_1\cup\{u_1\})\times\{(u_2,\dots,u_n)\}\subset R$ and
   the idempotence of $\algA$.

   Let $D'=\Sg(C_1\cup \{u_1\})$. Since $(a_1,u_2,\dots,u_n)\not\in R$,
   we obtain $a_1\not\in D'$, making $D'$ strictly smaller than $D_1$.

     Consider now the relation $R'=R\cap (D'\times D_2\times D_3\times \dots\times
     D_n)$. From $R'\subsetneq
R$ we see that $R'$ is not elusive. However, $R'$ contains the chipped cube
     \[
      \left[\begin{matrix}
              C_1&|&D'\\
              C_2&|&D_2\\
              C_3&|&D_3\\
	 &\vdots\\
	      C_{n}&|&D_{n}\\
	\end{matrix}\right].
\]
This is only possible if $\{u_1\}\times D_2\times\dots\times
D_n\subset R'\subset R$, which is exactly what we needed.
  \end{proof}
We would like to remark that Lemma~\ref{lemCCbe} does not hold for general algebras. As an example, consider the algebra $\algA$ on the set $\{0,1,2\}$ with one unary constant operation $c_2(x)=2$. For any $n\geq 2$ let 
\[
R=\{2^{\bf n}\}\cup \{0,1\}^n\setminus\{10^{\bf n-1}\}.
\]
This relation is compatible with $\algA$ and elusive. It is straightforward to verify that $R$ is also minimal such ($n$-ary elusive relation needs to contain at least $2^n-1$ tuples; $R$ is just one tuple larger than this theoretical minimum, and a case consideration shows that we can't discard any tuple from $R$). However, $R$ is not a chipped cube: The projections of $R$ to each coordinate are all equal to $\{0,1,2\}$ and $R$ contains the tuple $2^{\bf n}$. It is not hard to show that any chipped cube with these two properties contains at least $3^{n-1}$ tuples with at least one entry 2, a property that $R$ fails to have.

  
\begin{corollary}\label{corIdempotentObstruction}
  Let $\algA$ be a finite idempotent algebra, and let $d\geq 2$ be an integer. Then $\algA$ has a cube term of dimension $d$ if and only if there is no $d$-ary chipped cube relation compatible with $\algA$.
  \end{corollary}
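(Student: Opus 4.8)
The plan is to prove both directions by contraposition. For ``$\algA$ has a cube term of dimension $d$ $\Rightarrow$ no $d$-ary chipped cube is compatible with $\algA$'' I will check that every chipped cube is an elusive relation and then quote Observation~\ref{obsElusiveBlocks}. For the converse I will extract a $d$-ary elusive compatible relation from Lemma~\ref{lemMatrix}, shrink it to an inclusion-minimal one, and invoke Lemma~\ref{lemCCbe}.

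First I would record that every $d$-ary chipped cube $E$ is elusive. Write $E$ with blocks $B_1,\dots,B_k$ and associated pairs $C_i\subsetneq D_i$, $n_i=|B_i|$, $n_1+\dots+n_k=d$. For each $i$ pick some $\alpha_i\in D_i\setminus C_i$ (possible as $C_i\subsetneq D_i$) and some $\beta_i\in C_i$ (possible as $C_i\neq\emptyset$), and let $\vect a,\vect b\in A^d$ be the tuples that are constantly $\alpha_i$, respectively $\beta_i$, on the block $B_i$. For $\emptyset\neq I\subset[d]$ every entry of $\chi_I(\vect a,\vect b)$ lies in the appropriate $D_i$, so $\chi_I(\vect a,\vect b)\in\prod_{i=1}^{k}D_i^{n_i}$; but picking $j\in I$ and the block $B_i$ containing $j$, the $j$-th entry equals $\beta_i\in C_i$, so $\chi_I(\vect a,\vect b)\notin\prod_{i=1}^{k}(D_i\setminus C_i)^{n_i}$, hence $\chi_I(\vect a,\vect b)\in E$. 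On the other hand $\vect a$ lies entirely in $\prod_{i=1}^{k}(D_i\setminus C_i)^{n_i}$, so $\vect a\notin E$. Thus $\vect a,\vect b$ witness the elusiveness of $E$, and if moreover $E$ is compatible with the idempotent algebra $\algA$, Observation~\ref{obsElusiveBlocks} says $\algA$ has no cube term of dimension $d$. Contrapositively, if $\algA$ has a cube term of dimension $d$, no $d$-ary chipped cube is compatible with $\algA$.

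For the converse, suppose $\algA$ has no cube term of dimension $d$. By Lemma~\ref{lemMatrix} there are $\vect a,\vect b\in A^d$ with
\[
\seq{A}\vect a\notin R_0:=\Sg\bigl(\{\seq{A}\chi_I(\vect a,\vect b)\colon \emptyset\neq I\subset[d]\}\bigr).
\]
Applying the remark following Proposition~\ref{prop:Galois} $|A|$ times (over an idempotent algebra, fixing a coordinate of a compatible relation to a prescribed element and projecting it out yields a compatible relation), the $d$-ary relation $R_1:=\{\vect x\in A^d\colon \seq{A}\vect x\in R_0\}$ is compatible with $\algA$. Since all generators of $R_0$ begin with the prefix $\seq{A}$, we get $\chi_I(\vect a,\vect b)\in R_1$ for every nonempty $I\subset[d]$, while $\vect a=\chi_\emptyset(\vect a,\vect b)\notin R_1$; hence $R_1$ is a $d$-ary elusive relation compatible with $\algA$. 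As $A$ is finite, there are only finitely many $d$-ary relations on $A$, so we may pick $R\subset R_1$ inclusion-minimal among $d$-ary elusive relations compatible with $\algA$. Any relation strictly contained in $R$ is again $d$-ary, so $R$ is in fact inclusion-minimal among \emph{all} elusive relations compatible with $\algA$, and Lemma~\ref{lemCCbe} shows $R$ is a chipped cube. Being of arity $d$, it is the desired $d$-ary chipped cube compatible with $\algA$.

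The only point that needs care is lining the two halves up: Lemma~\ref{lemMatrix} is phrased with the auxiliary prefix $\seq{A}$, whereas Lemmas~\ref{lemCubeNotes} and~\ref{lemCCbe} are about prefix-free relations, so one must use Proposition~\ref{prop:Galois} together with idempotence to strip the prefix, and one must observe that minimality among $d$-ary relations already suffices to apply Lemma~\ref{lemCCbe}. Both are routine, so given the earlier lemmas I expect no real obstacle.
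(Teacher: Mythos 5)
Your proof is correct and follows essentially the same route as the paper: the forward direction by noting chipped cubes are elusive and invoking Observation~\ref{obsElusiveBlocks}, the converse by using Lemma~\ref{lemMatrix}, stripping the $\seq{A}$ prefix, passing to an inclusion-minimal elusive compatible relation, and applying Lemma~\ref{lemCCbe}. The only cosmetic difference is in prefix removal: the paper observes directly that any term producing $\vect a$ from $\{\chi_I(\vect a,\vect b)\}$ would by idempotence also produce $\seq{A}\vect a$ from the prefixed generators, so $\vect a\not\in\Sg(\{\chi_I(\vect a,\vect b)\colon I\neq\emptyset\})$; you instead build the relation $R_1$ by repeatedly fixing-and-projecting via Proposition~\ref{prop:Galois}, which yields a possibly larger but still valid elusive compatible relation — both are routine and equivalent for the purpose at hand.
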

  \begin{proof}
  Cube terms are incompatible with chipped cubes, so the interesting implication is that the absence of a $d$-dimensional cube term gives us a $d$-ary compatible chipped cube. 
  
  Assume thus that $\algA$ has no $d$-dimensional cube term. We claim that then $\algA$ is compatible with some $d$-ary elusive relation. By Lemma~\ref{lemMatrix}, there exist $\vect a,\vect b\in A^d$ such that $\seq{A}\vect a\not\in \Sg(\{\seq{A}\chi_I(\vect a,\vect b)\colon I\neq \emptyset\})$. Since $\algA$ is idempotent, we can remove the $\seq{A}$ prefix and have $\vect a\not\in \Sg(\{\chi_I(\vect a,\vect b) \colon I\neq \emptyset\})$. This amounts to saying that $\vect a$ is an elusive tuple for $\Sg(\{\chi_I(\vect a,\vect b) \colon I\neq \emptyset\})$. To finish the proof, we take an inclusion minimal $d$-ary elusive relation $E$ compatible with $\algA$. By Lemma~\ref{lemCCbe}, this $E$ is a chipped cube.
  \end{proof}
  
  \smallskip
  The following Lemma is a nontrivial consequence of
  Proposition~\ref{propBlocker}.
  \begin{lemma}
    Let $\algA=(A;f_1,\dots,f_\ell)$ be an idempotent algebra that has a cube
    term. Denote by $m_1,\dots,m_\ell$ the arities of $f_1,\dots,f_\ell$.
    Assume that a chipped cube
  \[
    F= 
    \left[\begin{matrix}
	C_1&|&D_1\\
	C_2&|&D_2\\
	   &\vdots&\\
	C_k&|&D_k
  \end{matrix}\right]
  \]
is compatible with $\algA$. Let
    \[
      U_i=\{j\colon \text{$(C_j,D_j)$ is \emph{not} a cube term blocker in the algebra $(A;f_i)$}\}
    \]
  where $i$ goes from 1 to $\ell$ (see Definition~\ref{defBlocker} for what a
    cube term blocker is). Then:
    \begin{enumerate}[(a)]
      \item $\bigcup_{i=1}^\ell U_i=[k]$, and\label{partBigcup}
      \item  for each $i=1,\dots,\ell$ we have $m_i\geq 1+|U_i|$.
    \end{enumerate}
    \end{lemma}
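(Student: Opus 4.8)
The plan is to work with a chipped cube $F$ compatible with $\algA$, assume $\algA$ has a cube term, and extract a contradiction-free structural picture of how the "non-blocker" index sets $U_i$ must cover $[k]$.

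For part (a): suppose $\bigcup_i U_i \neq [k]$, so there is a block index $j_0 \in [k]$ such that $(C_{j_0}, D_{j_0})$ is a cube term blocker in each single-operation algebra $(A; f_i)$. I want to contradict Proposition~\ref{propMMMblocker}, but that proposition is about a single algebra, not a family — so I need to first show $(C_{j_0}, D_{j_0})$ is a cube term blocker of $\algA$ itself. This should follow from Proposition~\ref{propBlocker}: for each basic operation $f_i$ of $\algA$, since $(C_{j_0},D_{j_0})$ blocks in $(A;f_i)$, there is a coordinate $p$ of $f_i$ with $f_i(D_{j_0},\dots,C_{j_0},\dots,D_{j_0}) \subseteq C_{j_0}$ (with $C_{j_0}$ in position $p$); but that is exactly the condition in Proposition~\ref{propBlocker} for $(C_{j_0},D_{j_0})$ to be a blocker of the full algebra $\algA = (A;f_1,\dots,f_\ell)$. (I should double-check $C_{j_0}, D_{j_0}$ are subuniverses of $\algA$: since $F$ is a chipped cube compatible with $\algA$, restricting and projecting shows $D_{j_0} = \pi_{j_0}(F)$ is a subuniverse, and $C_{j_0}$ is a subuniverse because... actually I should verify this carefully — $C_{j_0}$ being a subuniverse may need the fact that chipped cubes are built from nonempty $C_i \subsetneq D_i$ which in the compatible case forces $C_i$ to be subuniverses; if not already given I can invoke Proposition~\ref{prop:Galois}-style fixing/projecting arguments to recover $C_{j_0}$ as a subuniverse.) Then Proposition~\ref{propMMMblocker} gives that $\algA$ has no cube term, contradicting the hypothesis. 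Hence $\bigcup_i U_i = [k]$.

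For part (b): fix $i$ and consider the single basic operation $f_i$ of arity $m_i$. For each $j \in U_i$, the pair $(C_j, D_j)$ is \emph{not} a blocker in $(A;f_i)$, so by the negation of the condition in Proposition~\ref{propBlocker}, for \emph{every} coordinate $p \in [m_i]$ there exist $c \in C_j$ and $d_1,\dots,d_{m_i} \in D_j$ with $f_i(d_1,\dots,d_{p-1},c,d_{p+1},\dots,d_{m_i}) \notin C_j$. The idea is to feed $f_i$ a cleverly chosen $m_i$-tuple of rows of $F$ so that the output lies outside $F$, contradicting compatibility — unless $m_i$ is large enough. Concretely, for each $j \in U_i$ I want to designate one "private" coordinate $p_j \in [m_i]$ of $f_i$ (these should be distinct across $j \in U_i$, which forces $m_i \geq |U_i|$), plus one extra coordinate (forcing the $+1$). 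At the extra coordinate I place a column whose restriction to block $B_j$ is a "witness-for-the-chip" tuple (i.e. a tuple in $(D_j \setminus C_j)^{|B_j|}$ realizing the deleted corner in block $j$), while at coordinate $p_j$ I place a column that in block $B_j$ puts the $c$-value needed so that $f_i$ outputs something in $(D_j\setminus C_j)$ on block $j$, and at all other coordinates $p \notin \{p_j : j\} \cup \{\text{extra}\}$ I use columns lying solidly inside $F$. Then for each block $j \in U_i$, the output tuple of $f_i$ restricted to $B_j$ lands in $(D_j \setminus C_j)^{|B_j|}$; since $\bigcup_{j} U_i = [k]$ (part (a)) — wait, I only control blocks in $U_i$ for this particular $i$, so I instead need to argue that for blocks $j \notin U_i$ I simply choose all input columns to agree inside some $(D_j\setminus C_j)^{|B_j|}$ slice too, which is fine since those $(D_j, C_j)$ being blockers doesn't prevent me from just keeping everything in $D_j \setminus C_j$. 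The upshot: the output of $f_i$ lies in $\prod_j (D_j \setminus C_j)^{|B_j|}$, i.e. in the deleted part of the chipped cube, contradicting $f_i(F,\dots,F) \subseteq F$. To make this work I needed $|U_i|$ distinct private coordinates plus one shared extra coordinate, hence $m_i \geq |U_i| + 1$.

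The main obstacle I anticipate is the bookkeeping in part (b): choosing the input columns of $f_i$ so that \emph{simultaneously}, for every block $j$, the output lands in the "chipped-off" region $(D_j\setminus C_j)^{|B_j|}$, while only using distinct coordinates $p_j$ for $j \in U_i$. The subtlety is that a column used as the private coordinate for block $j$ must, on \emph{other} blocks $j'$, still contribute a $D_{j'}\setminus C_{j'}$-value to keep those outputs in the forbidden region; so every single column I feed to $f_i$ must, outside of its "active" block, stay within $D\setminus C$ on all other blocks — and I must verify that such columns exist and lie in $F$ (they do: a tuple that is in $D_{j'}\setminus C_{j'}$ on \emph{every} block is exactly a tuple in the deleted set, hence \emph{not} in $F$!). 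This is the real trap: a column entirely inside $\prod(D_j\setminus C_j)$ is not in $F$, so I cannot use it as an input. The fix is that each input column must be in $C_{j^*}$ on at least one block $j^*$ to witness membership in $F$; so I should arrange, for the private column of block $j$, that it sits in $C_{j}$ on some auxiliary block and realizes the "push-out" value on block $j$ only via the non-blocker property — and then I must check these auxiliary memberships don't spoil the output count. Resolving this interplay — essentially a careful combinatorial design of an $(n_1+\dots+n_k)\times m_i$ matrix over the $C_j/D_j\setminus C_j$ pattern — is where the work lies; the pigeonhole step giving $m_i \geq |U_i|+1$ then falls out because $|U_i|$ blocks each demand a private coordinate and at least one further coordinate is spent keeping all input columns legitimately inside $F$.
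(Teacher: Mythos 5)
Your part~(a) is correct and matches the paper's argument. The parenthetical worry about whether $C_{j_0},D_{j_0}$ are subuniverses is moot: the hypothesis that $(C_{j_0},D_{j_0})$ is a cube term blocker of each $(A;f_i)$ already entails, via Definition~\ref{defBlocker}, that $C_{j_0}$ and $D_{j_0}$ are subuniverses of each $(A;f_i)$, hence of $\algA$.

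Part~(b) has the right key idea --- a matrix built from non-blocker violations so that every column lies in the chipped cube while $f_i$ applied rowwise lands in the deleted corner --- but the logical scaffolding is backwards, and the ``extra coordinate'' is a red herring. You must not presuppose that $|U_i|+1$ coordinates exist; the correct move is to assume for contradiction that $m_i\leq|U_i|$ and show that $F$ then fails to be compatible with $f_i$. The paper picks (WLOG) blocks $1,\dots,m_i\in U_i$, projects $F$ onto those coordinates (fixing the remaining ones to singleton values in $D_j\setminus C_j$ and using idempotence) to obtain an $m_i$-ary chipped cube $E$ that $f_i$ preserves, and forms the $m_i\times m_i$ matrix $M$ whose $j$-th row is a tuple $(a_{j,1},\dots,a_{j,m_i})\in D_j^{m_i}$ with $a_{j,j}\in C_j$ and $f_i(a_{j,1},\dots,a_{j,m_i})\notin C_j$, guaranteed by the non-blocker condition applied at coordinate $j$. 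The diagonal entries $a_{j,j}\in C_j$ are precisely what certify that each column of $M$ lies in $E$ --- this cleanly dispatches the ``trap'' you identified, with no extra column and no auxiliary block --- while $f_i(M)\in\prod_j(D_j\setminus C_j)$ lies outside $E$, giving the contradiction. The ``$+1$'' in $m_i\geq|U_i|+1$ is the conclusion of this reductio, not a count of coordinates consumed by the construction. Finally, your proposed auxiliary-block fix is risky on its own terms: if a column's $C_j$-witness sits at a blocker-block $j\notin U_i$, the blocker condition at that block may force $f_i$ of row $j$ back into $C_j$, returning the output tuple to $F$ and voiding the contradiction.
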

    \begin{proof}
    Note that we implicitly have $\emptyset\neq C_i\subsetneq D_i$ for each
      $i$ from the definition of a chipped cube.

      Since $\algA$ has a cube term, there is no cube term blocker in $\algA$. In
    particular none of $(C_1,D_1)$, $(C_2,D_2)$, \dots, $(C_k,D_k)$ are
      blockers for $(A;f_1,\dots,f_\ell)$. Were, say, $(C_1,D_1)$ a blocker for
      all algebras $(A;f_i)$ then each $f_i$ maps each relation $D_1^n\setminus(D_1\setminus C_1)^n$ into itself, making $(C_1,D_1)$ a blocker for the whole algebra $\algA$.
      Therefore, for each $j\in [k]$ there exists an $i$ so
      that $j\in U_i$, i.e. $\bigcup_{i=1}^\ell U_i=[k]$, giving us
      part~(\ref{partBigcup}).
      
    It remains to show that for each $i\in[\ell]$ we have
      $m_i\geq 1+|U_i|$. Assume for a contradiction that $m_i\leq |U_i|$ for some $i$. 
      Without loss of generality we can assume that in fact $[m_1]\subseteq U_{1}$
      (we are free to reorder the $f_i$'s and $(C_j,D_j)$'s).
    We then consider the chipped cube
    \[
      E=
    \left[\begin{matrix}
	C_1&|&D_1\\
	C_2&|&D_2\\
	   &\vdots&\\
	C_{m_1}&|&D_{m_1}\\
     \end{matrix}\right].
      \]
    It is easy to see that $E$ can be obtained from $F$ by restricting all but
    the first $m_1$ coordinates to some singleton values from $D_j\setminus C_j$ 
      and projecting the result to the first $m_1$ coordinates (here we need that
      $\algA$ is idempotent). Therefore $f_1$ 
    preserves $E$.
   
      However, we know that $(C_1,D_1),\dots,(C_{m_1},D_{m_1})$ are not blockers for
      $(A;f_1)$. By Proposition~\ref{propBlocker},
         for each $j\in \{1,2,\dots,m_1\}$ we then can find a tuple
      \[
	(a_{j,1},a_{j,2},\dots,a_{j,m_1})\in D_j^{m_1}
	\]
	such that $a_{j,j}\in C_j$ and
    $f_1(a_{j,1},\dots,a_{j,m_1})\not\in C_j$. 

    Arrange the above mentioned tuples into rows of an $m_1\times m_1$ matrix
      $M$. Since $a_{j,j}\in C_j$ for all $j=1,\dots,m_1$, 
    each column of $M$ belongs to $E$. Therefore, we should have $f_1(M)\in E$
    as well. But $f_1$ applied to the $j$-th row of
    the matrix $M$ gives us an element from $D_j\setminus C_j$ for each $j$, so
    $f_1(M)$ fails to be in $E$, a contradiction.
    
  \end{proof} 
  Since the sets $U_i$ in the above theorem depend only on $f_i$ and the sets $C_j,D_j$, we can
  generalize the result to the case when $(C_j,D_j)$ appears multiple times in
  $F$:
  \begin{corollary}\label{corCounting}
   Let $\algA=(A;f_1,\dots,f_\ell)$ be an idempotent algebra that has a cube
    term. Denote by $m_1,\dots,m_\ell$ the arities of $f_1,\dots,f_\ell$.
    Assume that a chipped cube
  \[
    F= 
    \left[\begin{matrix}
	C_1&|&D_1^{n_1}\\
	C_2&|&D_2^{n_2}\\
	   &\vdots&\\
	C_k&|&D_k^{n_k}
  \end{matrix}\right]
  \]
   is compatible with $\algA$. Then there exists a family of sets
    $U_1,\dots,U_\ell$ such that $\bigcup_{i=1}^\ell U_i=[k]$ and for each $i$
    we have $m_i\geq 1+\sum_{j\in U_i}n_j$.
  \end{corollary}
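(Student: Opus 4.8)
The plan is to reduce Corollary~\ref{corCounting} to the preceding Lemma by ``unpacking'' each repeated block $(C_j, D_j^{n_j})$ into $n_j$ separate blocks $(C_j, D_j), (C_j, D_j), \dots, (C_j, D_j)$, each of width $1$. That is, I would consider the chipped cube
\[
  F' =
  \left[\begin{matrix}
    C_1 & | & D_1 \\
    \vdots & & \\
    C_1 & | & D_1 \\
    C_2 & | & D_2 \\
    \vdots & & \\
    C_k & | & D_k
  \end{matrix}\right]
\]
with $n_1 + n_2 + \dots + n_k$ blocks in total, where the pair $(C_j,D_j)$ is repeated $n_j$ times consecutively. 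The key observation is that $F'$ is exactly the same relation as $F$ (with the same coordinate set), since the definition of a chipped cube only depends on the sequence of pairs $(C_i,D_i)$ attached to each individual coordinate, not on how we group coordinates into blocks; the block structure is bookkeeping. Hence $F'$ is compatible with $\algA$.

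Now I apply the Lemma to $F'$. This yields sets $U_1', \dots, U_\ell' \subset [n_1 + \dots + n_k]$ with $\bigcup_{i=1}^\ell U_i' = [n_1 + \dots + n_k]$ and $m_i \geq 1 + |U_i'|$ for each $i$. The crucial point, explicitly noted in the sentence preceding the corollary, is that membership of a coordinate $j$ in $U_i'$ depends \emph{only} on the pair $(C_j, D_j)$ and on $f_i$ — it is the condition that $(C_j, D_j)$ is not a cube term blocker in $(A; f_i)$. Therefore, among the $n_j$ coordinates of $F'$ that carry the pair $(C_j, D_j)$, either all of them lie in $U_i'$ or none of them do. This lets me define $U_i = \{ j \in [k] \colon \text{the coordinates of } F' \text{ carrying } (C_j,D_j) \text{ lie in } U_i' \}$, equivalently $j \in U_i$ iff $(C_j,D_j)$ is not a cube term blocker in $(A;f_i)$.

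It remains to check the two claimed conclusions. For $\bigcup_i U_i = [k]$: given $j \in [k]$, pick any coordinate $j'$ of $F'$ carrying the pair $(C_j, D_j)$; since $\bigcup_i U_i' = [n_1+\dots+n_k]$ there is some $i$ with $j' \in U_i'$, and by the all-or-nothing property this forces $j \in U_i$. For the arity bound: fix $i$; then $U_i'$ is precisely the union of the coordinate-sets carrying pairs $(C_j,D_j)$ with $j \in U_i$, so $|U_i'| = \sum_{j \in U_i} n_j$, and the Lemma's inequality $m_i \geq 1 + |U_i'|$ becomes $m_i \geq 1 + \sum_{j \in U_i} n_j$, as desired. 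No step here is a real obstacle — the only thing to be careful about is spelling out the all-or-nothing property, i.e. that the indicator of ``$j' \in U_i'$'' factors through the pair $(C_{j},D_{j})$, which is immediate from Proposition~\ref{propBlocker}'s characterization of cube term blockers for a single operation.
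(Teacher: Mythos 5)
Your proposal matches the paper's intended argument exactly. The paper gives only the one-sentence observation that the sets $U_i$ in the preceding lemma depend solely on $f_i$ and the pair $(C_j,D_j)$ — which is precisely your ``all-or-nothing'' property — and then states the corollary without further detail; your proof is a careful write-up of that same reduction (unpack into single-width blocks, apply the lemma, regroup), and it is correct as written.
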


  We are now ready to give a lower bound on the dimension of a cube term in finite idempotent algebras with cube terms. For a version of the following theorem that works for infinite idempotent algebras, see~\cite[Theorem 4.1]{kearnes-szendrei-cube-term-blockers}
  (we discuss the relationship between our paper
  and~\cite{kearnes-szendrei-cube-term-blockers} in detail at the end of the Introduction).
  \begin{theorem}\label{thmIdempotentCTB} 
    Let $\algA=(A;f_1,\dots,f_\ell)$ be a finite idempotent algebra. Let
    $m_1\geq m_2\geq \dots\geq m_\ell$ be the arities of the basic operations of $\algA$. 
    Let $N=1+\sum_{i=1}^r (m_i-1)$ where $r=\min(\ell,{\binom{|A|}{2}})$. Assume
    that $N>2$ or $|A|>2$. Then the following are equivalent:
    \begin{enumerate}[(a)]
      \item $\algA$ does not have a cube term.\label{itmIdmpNoCube}
      \item $\algA$ has a cube term blocker.\label{itmCTB}
      \item $\algA$ does not have a cube term of dimension $N$. \label{itmIdmpNoHugeCube}
      \item There is an $N$-ary elusive relation compatible with $\algA$.
	\label{itmNcrit} 
      \item There exists an $N$-ary chipped cube compatible with $\algA$.\label{itmNchipped cube}
    \end{enumerate}
  \end{theorem}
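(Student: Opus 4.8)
The plan is to prove the chain of implications
$(\ref{itmIdmpNoCube})\Rightarrow(\ref{itmCTB})\Rightarrow(\ref{itmIdmpNoCube})$,
$(\ref{itmIdmpNoCube})\Rightarrow(\ref{itmNchipped cube})\Rightarrow(\ref{itmNcrit})\Rightarrow(\ref{itmIdmpNoHugeCube})\Rightarrow(\ref{itmIdmpNoCube})$,
so that everything becomes equivalent. The equivalence of (\ref{itmIdmpNoCube}) and (\ref{itmCTB}) is exactly Proposition~\ref{propMMMblocker}. The implication $(\ref{itmNchipped cube})\Rightarrow(\ref{itmNcrit})$ is immediate since a chipped cube is elusive by construction (take $\vect a$ with $a_i\in D_i\setminus C_i$ in each block and $\vect b$ with $b_i\in C_i$). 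The implication $(\ref{itmNcrit})\Rightarrow(\ref{itmIdmpNoHugeCube})$ is Observation~\ref{obsElusiveBlocks}. And $(\ref{itmIdmpNoHugeCube})\Rightarrow(\ref{itmIdmpNoCube})$ is trivial: a cube term of dimension $d<N$ gives one of dimension $N$ by adding dummy variables, so no $N$-dimensional cube term means no cube term at all. So the entire content of the theorem is the single implication $(\ref{itmIdmpNoCube})\Rightarrow(\ref{itmNchipped cube})$: if $\algA$ has no cube term, then it is compatible with an $N$-ary chipped cube, where $N=1+\sum_{i=1}^r(m_i-1)$ with $r=\min(\ell,\binom{|A|}2)$.

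To prove this, I would start from the fact that $\algA$ has no cube term, hence (by Proposition~\ref{propMMMblocker}, or directly) $\algA$ is compatible with \emph{some} elusive relation; take an inclusion-minimal one, which by Lemma~\ref{lemCCbe} is a chipped cube $F$ with blocks of sizes $n_1,\dots,n_k$ and pairs $(C_1,D_1),\dots,(C_k,D_k)$. Actually, to get control over the blocks it is cleaner to allow ourselves to ``blow up'' coordinates: given any compatible chipped cube we may replace a block $(C_j,D_j)^{n_j}$ by $(C_j,D_j)^{n_j+1}$ and stay compatible and elusive (duplicate a coordinate — compatibility is preserved under the diagonal, and idempotence gives elusiveness of the larger one). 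Conversely Corollary~\ref{corCounting} constrains how large a compatible chipped cube can be: since $\algA$ has a cube term blocker-free structure... wait — here we are in the case that $\algA$ has \emph{no} cube term, so Corollary~\ref{corCounting} does \emph{not} apply. Instead the argument must go the other way: I want to take the minimal chipped cube $F$ from Lemma~\ref{lemCCbe}, with $k$ blocks, and argue that $k\le r=\min(\ell,\binom{|A|}2)$ and that the total arity can be bounded by $N$. The bound $k\le\binom{|A|}2$ comes from Lemma~\ref{lemCubeNotes}: parts (\ref{itmSymmetry}) and (\ref{itmIntersect}) (and the fact that in a minimal chipped cube from Lemma~\ref{lemCCbe} one can take all block sizes equal to $1$, by the symmetry point (\ref{itmSymmetry})) force the pairs $(C_i,D_i)$ — equivalently, I expect, the pairs $(a_i,b_i)$ up to the equivalence ``generate the same $D$ and have disjoint or equal $C$'' — to be pairwise distinct in a way that there are at most $\binom{|A|}2$ of them; roughly, distinct blocks correspond to distinct two-element witness pairs $\{a_i,b_i\}\subseteq A$.

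For the arity bound: having reduced to a minimal chipped cube $F$ with $k\le\binom{|A|}2$ blocks each of size $1$, I would then ``re-inflate'' each block $j$ to size $m_{i}-1$ for an appropriately matched basic operation, or rather — run the dual of Corollary~\ref{corCounting}. The cleanest route is probably: show that if $\algA$ has no cube term then $\algA$ is compatible with a chipped cube with $k$ blocks and, crucially, that one can take each block size $n_j$ as large as one likes (blow-up); then observe that a chipped cube is incompatible with a $d$-dimensional cube term whenever $d\ge$ total arity, so compatibility with arbitrarily-wide chipped cubes already kills all cube terms — but we want the \emph{converse} refinement, namely that the obstruction can be taken with total arity exactly $N$. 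So the real work: given the minimal chipped cube with $k\le r$ blocks of size $1$, enlarge block $j$ to size $n_j$ where $(n_1,\dots,n_k)$ is chosen so that the resulting relation is still compatible — any block sizes work — and then \emph{trim}: among all the $m_i$'s pick the $r$ largest, match block $j$ ($j=1,\dots,k\le r$) with $m_j$ and pad with zero-size blocks if $k<r$, setting $n_j=m_j-1$, giving total arity $1+\sum_{j=1}^r(m_j-1)=N$... but this overshoots unless $k=r$ and needs care when $k<r$ (then $N$ is still an upper bound on the minimal obstruction size because adding dummy-ish coordinates only helps).

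\smallskip

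The main obstacle, and the step I would spend the most care on, is the combinatorial counting that gives $k\le\binom{|A|}2$: translating parts (\ref{itmSymmetry}) and (\ref{itmIntersect}) of Lemma~\ref{lemCubeNotes} (together with Lemma~\ref{lemCCbe}'s structure and the freedom to split/merge equal-sized blocks) into a clean injection from the set of blocks of a suitably-normalized minimal chipped cube into the set of $2$-element subsets of $A$, and simultaneously matching blocks to distinct basic operations to get $k\le\ell$. The side conditions ``$N>2$ or $|A|>2$'' are needed to rule out the degenerate case $|A|=2$, $N=2$ where a $2$-ary chipped cube on a $2$-element set would have to be $\{0,1\}^2\setminus\{(a,a)\}$ type and the counting/minimality interacts badly with the binary symmetry argument in Lemma~\ref{lemCubeNotes}(\ref{itmSymmetry}); I would isolate that case and note it is precisely excluded by hypothesis. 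The rest — the equivalences among (\ref{itmIdmpNoCube})–(\ref{itmNcrit}) and the arity bookkeeping — is routine given the lemmas already proved.
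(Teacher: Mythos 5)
Your chain of implications is logically valid as a plan, but you have misidentified where the substance of the theorem lies, and the implication you dismiss as ``trivial'' is in fact the one that requires all the work. You claim $(\ref{itmIdmpNoHugeCube})\Rightarrow(\ref{itmIdmpNoCube})$ is immediate because ``a cube term of dimension $d<N$ gives one of dimension $N$ by adding dummy variables, so no $N$-dimensional cube term means no cube term at all.'' That argument only shows that the absence of an $N$-dimensional cube term rules out cube terms of dimension \emph{at most} $N$; it says nothing about dimensions larger than $N$, and a priori $\algA$ could have a cube term of dimension $N+1$ or $10^6$. Ruling that out is exactly what the theorem asserts, and the paper correctly treats $(\ref{itmNchipped cube})\Rightarrow(\ref{itmIdmpNoCube})$ (equivalently, once you have $(\ref{itmIdmpNoHugeCube})\Leftrightarrow(\ref{itmNchipped cube})$ from Corollary~\ref{corIdempotentObstruction}, $(\ref{itmIdmpNoHugeCube})\Rightarrow(\ref{itmIdmpNoCube})$) as the sole nontrivial step.

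Conversely, the implication you spend all your effort on, $(\ref{itmIdmpNoCube})\Rightarrow(\ref{itmNchipped cube})$, is the easy one: going through $(\ref{itmCTB})$ via Proposition~\ref{propMMMblocker}, a cube term blocker $(C,D)$ immediately gives the chipped cube $D^N\setminus(D\setminus C)^N$ of arity $N$ (or of any arity you like), with no need for Lemma~\ref{lemCCbe}, Lemma~\ref{lemCubeNotes}, or any counting. Your inflate/trim/injection machinery is trying to extract a size-$N$ chipped cube directly from a minimal elusive relation, which is both unnecessary for this direction and, as you yourself noticed, stuck: you can't invoke Corollary~\ref{corCounting} because its hypothesis is that $\algA$ \emph{has} a cube term. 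That ``wait'' moment in your proposal is the signal that the direction is wrong. The paper's actual argument assumes for contradiction that $\algA$ has a cube term \emph{and} a compatible $N$-ary chipped cube, picks the minimal $n$ with $\algA$ having an $(n+1)$-dimensional cube term (so $n\geq N$), uses Lemma~\ref{lemCCbe} and Lemma~\ref{lemCubeNotes} to normalize a minimal $n$-ary elusive relation into a chipped cube with at most $\binom{|A|}2$ distinct blocks, and only then applies Corollary~\ref{corCounting} — legitimately, because under the contradiction hypothesis $\algA$ does have a cube term — to conclude $n\leq N-1$, a contradiction. The ingredients you gathered are the right ones, but they belong to the implication you waved away, and without that rearrangement the proposal does not close.
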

  \begin{proof}
    The equivalence of~(\ref{itmIdmpNoCube}) and~(\ref{itmCTB}) is already
    known~\cite{BIMMVW}.

    If~(\ref{itmCTB}) holds and $(C,D)$ is a cube term blocker for $\algA$, then
    the relation $D^N\setminus (D\setminus C)^N$ is an $N$-ary chipped cube,
    proving~(\ref{itmNchipped cube}).
    
    From~(\ref{itmNchipped cube}), condition~(\ref{itmNcrit}) follows trivially, from which~(\ref{itmIdmpNoHugeCube}) follows immediately by Observation~\ref{obsElusiveBlocks}.
   Corollary~\ref{corIdempotentObstruction} gives us that~(\ref{itmIdmpNoHugeCube})
    implies~(\ref{itmNchipped cube}). 
    
    It remains to show that~(\ref{itmNchipped cube})
    implies~(\ref{itmIdmpNoCube}). We proceed by contradiction, assuming that
    $\algA$ has both a cube term and an $N$-ary compatible chipped cube. Take $n$ smallest such that
    $\algA$ has a cube term of dimension $n+1$. Certainly $n\geq N$ since chipped cubes are elusive relations.
    
    Since $\algA$ is idempotent without an $n$-dimensional cube term, there must exist an $n$-ary elusive relation $R$ compatible with $\algA$. Let $\vect a,\vect b$ be the tuples that witness the elusiveness of $R$. We choose $R$ to be inclusion minimal; by Lemma~\ref{lemCCbe} $R$ is a chipped cube. Applying Lemma~\ref{lemCubeNotes}, we get that 
    \[
      E=\left[\begin{matrix}
	      C_1&|&D_1\\
		 &\vdots\\
	      C_{n}&|&D_{n}\\
	\end{matrix}\right],
      \]
      where $D_i=\Sg(a_i,b_i)$ for all $i\in[n]$ and 
      $(C_i,D_i)=(C_j,D_j)$ whenever $(a_i,b_i)=(a_j,b_j)$.
    Let us reorder the coordinates of $\vect a,\vect b$ so that
    identical pairs $(a_i,b_i)$ are grouped together. Let $k$ be the number of
    distinct pairs $(a_i,b_i)$ and denote by $n_i$ the number of times the
    $i$-th pair appears. After this reordering (and renaming of $C_i,D_i$'s) we have
     \[
      E=\left[\begin{matrix}
	  C_1&|&D_1^{n_1}\\
		 &\vdots\\
	  C_{k}&|&D_{k}^{n_k}\\
	\end{matrix}\right],
      \]
    where of course $n_1+n_2+\dots+n_k=n$. Since $a_i\neq b_i$ for each $i$, we
    obtain $k\leq |A|(|A|-1)$.  
    
    Assume first that $N>2$ (we will deal with the special cases $N=2$ and $N=1$ later). We claim that then in fact $k\leq |A|(|A|-1)/2$. To
    prove this, we show that for all $i,j$ we have  $(a_i,b_i)\neq (b_j,a_j)$.
    For  $i=j$, this is obvious, so assume without loss of generality that
    $i=1,j=n$. Then $D_1=D_n=\Sg(a_1,a_n)$ and $C_1\cap
    C_n=\emptyset$ by Lemma~\ref{lemCubeNotes}. Consider the relation
    \[
      F=\{(x_1,\dots,x_{2n-2})\colon\exists z\in D_1,
	(x_1,\dots,x_{n-1},z),(z,x_n,\dots,x_{2n-2})\in
      E\}.
    \]
    From the definition of $F$ we see that it is  compatible with $\algA$. We claim that
    the tuple $(a_1,a_2,\dots, a_{n-1},a_2,a_3,\dots, a_n)$ is elusive for $F$. This tuple is not a member of $F$ because were 
    $(a_1,\dots, a_{n-1},a_2,\dots, a_n)\in F$, then there would exist a $z$ such
    that $(a_1,\dots,a_{n-1},z),(z,a_2,\dots,a_n)\in E$. However, such $z$
    would need to lie in both $C_1$ and $C_n$, a contradiction. On the other
    hand, if we rewrite one or more entries of
    $(a_1,\dots,a_{n-1},a_2,\dots,a_n)$ to $b_i$, we can choose $z=b_n$ or
    $z=b_1$ and satisfy both $(x_1,\dots,x_{n-1},z)\in E$ and
    $(z,x_2,\dots,x_n)\in E$.

    Since $n\geq N>2$, we have $2n-2>n$, and so $F$ is an elusive compatible
    relation of arity higher than $n$, a contradiction with $\algA$ having
    an $(n+1)$-dimensional cube term. Therefore $k\leq
    |A|(|A|-1)/2$.

    We now apply Corollary~\ref{corCounting} to the relation
 \[
      E=\left[\begin{matrix}
	  C_1&|&D_1^{n_1}\\
		 &\vdots\\
	  C_{k}&|&D_{k}^{n_k}\\
	\end{matrix}\right],
      \]
      obtaining a family of sets $U_1,\dots,U_\ell$ (one for each basic
      operation of $\algA$) such that $U_1,\dots,U_\ell$ cover $[k]$ and
      $m_i-1\geq\sum_{j\in U_i} n_j$ for each $i$. Take
      $r=\min(\ell,{\binom{|A|}{2}})$ and observe that we can then cover $[k]$ by
      at most $r$ sets chosen from $U_1,\dots,U_\ell$. (If $r={\binom{|A|}{2}}<\ell$, the existence of such a covering of $[k]$ follows from $k\leq \binom{|A|}{2}$.) 
      Let $I\subset [\ell]$ be a set of indices of size $r$ so 
      that $\bigcup_{i\in I} U_i=[k]$. Since the
      arities $m_1,\dots,m_\ell$ are at least one and ordered in a
      nonincreasing order, we get the inequalities:
      \begin{align*}
	\sum_{i=1}^r (m_i-1)\geq \sum_{i\in I} (m_i-1)\geq \sum_{j=1}^k n_j=n.
      \end{align*}
      Therefore, $N-1= \sum_{i=1}^r (m_i-1)\geq n$, a contradiction with the
      assumption $n+1>N$.

      Assume now that $N=2$. Given the formula for $N$, we must have $m_1=2$
      and $m_2=\dots=m_\ell=1$. (Note that we are using $|A|>2$
      here; we need $r\geq 2$ to make sure that $m_2=1$.) Since $\algA$ is
      idempotent, the operations $f_2,\dots,f_\ell$ are unary identity
      mappings -- without loss of generality let $\ell=1$. We again take
      $n\ge N=2$ such that $n+1$ is the least dimension of a cube term in $\algA$ and  construct the chipped cube $E$ as above. We only have the $k\leq
      |A|(|A|-1)$ bound in this case; however, we can finish the proof anyway:
      We apply Corollary~\ref{corCounting} with $\ell=1$ and $m_1=2$ to get
      that $2=m_1\geq 1+n_1+\dots+n_k=1+n$, i.e.  $1\geq n$. This is a
      contradiction with $n\geq N=2$, finishing the proof. 

     Finally, if $N=1$, the clone of operations of $\algA$ consists of
     projections only and both~(\ref{itmNchipped cube}) and~(\ref{itmIdmpNoCube}) are trivially true.
  \end{proof}
  Using Theorem~\ref{thmIdempotentCTB}, we can obtain a bound on the dimension of a cube term that depends quadratically on $|A|$ and linearly on the maximum arity of a basic operation of $\algA$.
\begin{corollary}\label{cor:quadratic-linear}
Let $\algA$ be finite idempotent algebra whose basic operations have a maximal arity $m$. Then $\algA$ has a cube term if and only if $\algA$ has a cube term of dimension at most $1+(m-1)\binom{|A|}{2}$.
\end{corollary}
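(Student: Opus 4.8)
The plan is to read this off from Theorem~\ref{thmIdempotentCTB} with essentially no extra work; the only ingredients beyond that theorem are a crude arithmetic estimate on the number $N$ occurring there and a separate glance at the cases where $|A|$ is very small. The ``if'' direction is trivial, so suppose $\algA$ has a cube term. I would list the arities of the basic operations of $\algA$ as $m_1\geq m_2\geq\dots\geq m_\ell$, so that $m=m_1$, and set $r=\min(\ell,\binom{|A|}{2})$ and $N=1+\sum_{i=1}^{r}(m_i-1)$ exactly as in Theorem~\ref{thmIdempotentCTB}. The key bookkeeping step is the estimate
\[
  N \;=\; 1+\sum_{i=1}^{r}(m_i-1) \;\leq\; 1+\sum_{i=1}^{r}(m-1) \;=\; 1+r(m-1) \;\leq\; 1+(m-1)\binom{|A|}{2},
\]
which uses only $m_i\leq m$ for all $i$ and $r\leq\binom{|A|}{2}$; so it is enough to produce a cube term of dimension $N$.

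Next, I would treat the generic case $N>2$ or $|A|>2$, where the hypotheses of Theorem~\ref{thmIdempotentCTB} are satisfied. Applying that theorem directly: since $\algA$ has a cube term, condition~(\ref{itmIdmpNoCube}) fails, hence the equivalent condition~(\ref{itmIdmpNoHugeCube}) fails as well, which means precisely that $\algA$ has a cube term of dimension $N$. Together with the displayed inequality this is a cube term of dimension at most $1+(m-1)\binom{|A|}{2}$, as desired.

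It then remains to dispose of the degenerate situation $N\leq 2$ and $|A|\leq 2$, which Theorem~\ref{thmIdempotentCTB} does not cover. If $|A|=1$ the claim is immediate. If $|A|=2$ then $\binom{|A|}{2}=1$, forcing $r=1$ and $N=m$, so $N\leq2$ gives $m\leq2$; when $m=1$ all basic operations are the identity, the clone of $\algA$ consists of projections, $\algA$ has no cube term, and the implication is vacuous. The only remaining possibility is $|A|=2$ with $m=2$, which I would handle by hand from the classification of clones on a two-element set: up to dummy operations the only nontrivial candidates are the two semilattices (which have cube term blockers, hence no cube term) and the two-element lattice, and this last case is exactly where the bound is delicate and must be argued directly. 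I do not anticipate a real obstacle anywhere: the substance is entirely in Theorem~\ref{thmIdempotentCTB}, and what is left is the routine inequality $N\leq 1+(m-1)\binom{|A|}{2}$ plus the small-$|A|$ bookkeeping; the one point that genuinely deserves care is the boundary case $|A|=2$, $m=2$, which lies outside the range ``$N>2$ or $|A|>2$'' and where the estimate on $N$ is essentially optimal.
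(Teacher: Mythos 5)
Your overall approach --- invoke Theorem~\ref{thmIdempotentCTB}, bound $N$ arithmetically by $1+(m-1)\binom{|A|}{2}$, and then separately treat the degenerate cases --- is essentially the same route the paper takes, and you are in fact more careful: the paper's own proof of this corollary applies Theorem~\ref{thmIdempotentCTB} and the inequality $N\leq 1+(m-1)\binom{|A|}{2}$ without ever checking the theorem's hypothesis ``$N>2$ or $|A|>2$'', whereas you correctly isolate $|A|=2$, $m=2$ as the one configuration that lies outside that range.

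However, your dismissal of that last case contains a genuine gap. You say that for $|A|=2$, $m=2$ you ``do not anticipate a real obstacle'' and would dispatch the two-element lattice by hand; in fact, $\algA=(\{0,1\},\wedge,\vee)$ is a counterexample to the corollary as literally stated. Here $m=2$ and $\binom{|A|}{2}=1$, so the claimed bound is $1+(m-1)\binom{|A|}{2}=2$, yet this algebra has a cube term (dimension $3$, coming from the majority operation) but no Maltsev term, i.e.\ no cube term of dimension $2$. The paper itself makes exactly this point immediately after Theorem~\ref{thmIdempotentTight}, exhibiting the compatible elusive relation $\{(0,0),(0,1),(1,1)\}$ with elusive tuple $(1,0)$. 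So the case you flag as ``delicate but not a real obstacle'' is precisely where the stated bound fails; the corollary needs an extra hypothesis (e.g.\ $|A|>2$, or $m>2$, or replacing the bound by $\max\bigl(3,\,1+(m-1)\binom{|A|}{2}\bigr)$). To be fair, the paper's own one-sentence proof overlooks the same hypothesis, so the issue originates in the source rather than in your reading --- but as written your proposal leaves a step that does not go through.
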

\begin{proof}
By Theorem~\ref{thmIdempotentCTB}, if $\algA$ has a cube term blocker, it has a cube term blocker of arity at most
$N=1+\sum_{i=1}^r(m_i-1)$ where each $m_i\leq m$ and $r\leq \binom{|A|}{2}$. Hence, we can estimate the arity of the cube term as $N\leq 1+(m-1)\binom{|A|}{2}$ and the claim follows.
\end{proof}

  It turns out that the bound on the dimension of a cube term in
  Theorem~\ref{thmIdempotentCTB} and Corollary \ref{cor:quadratic-linear} is tight (see also~\cite[Example
  4.4]{kearnes-szendrei-cube-term-blockers}):
  \begin{theorem}\label{thmIdempotentTight}
    Let $\ell,n\in\en$ and $m_1\geq m_2\geq\dots\geq m_\ell$ be positive integers. 
    Let $N=1+\sum_{i=1}^r (m_i-1)$ where $r=\min(\ell,\binom{n}{2})$. Assume
    that either $n>2$ or $n=2$ and $N>2$. Then there exist idempotent operations $f_1,\dots,f_\ell$ on the
    set $[n]$ of arities $m_1,\dots,m_\ell$ such that $\algA=([n];f_1,\dots,f_\ell)$ has a
    cube term of dimension $N$, but no cube term of dimension $N-1$.
  \end{theorem}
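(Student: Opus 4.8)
The plan is to construct, for each $i = 1, \dots, \ell$, an idempotent operation $f_i$ on $[n]$ of arity $m_i$, tailored so that a single cube term blocker of arity exactly $N-1$ survives all $\ell$ operations (forcing no cube term of dimension $N-1$ by Observation~\ref{obsElusiveBlocks} and Proposition~\ref{propMMMblocker}), while every chipped cube of arity $N$ is destroyed by some $f_i$ (giving a cube term of dimension $N$ via Corollary~\ref{corIdempotentObstruction}). The natural candidate blocker is built from the sets $C = \{1\} \subsetneq D = \{1,2\}$, using the well-known fact that on a two-element set the pair $(\{1\}, \{1,2\})$ is a blocker for an operation $g$ exactly when $g$ restricted to $\{1,2\}$ has a coordinate that is ``absorbing into $\{1\}$'' — i.e. $g$ behaves like a semilattice-ish / projection-ish operation on that block. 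I would let the first $r$ operations $f_1, \dots, f_r$ each be (a suitably padded version of) an operation that is a near-projection on $\{1,2\}$ but acts wildly outside, and let $f_{r+1}, \dots, f_\ell$ (the low-arity tail, if $\ell > r$) be operations for which $(\{1\},\{1,2\})$ is automatically a blocker because their arity is so small that the counting bound of Corollary~\ref{corCounting} cannot be violated — concretely, make them depend on only one variable on the relevant blocks. To realize a blocker of arity $N-1 = \sum_{i=1}^r (m_i - 1)$, I would take a chipped cube whose blocks are indexed by $i \in [r]$, the $i$-th block having size $m_i - 1$, with the pair $(C_i, D_i) = (\{1\}, \{1,2\})$ (or, to be safe about the hypotheses on $n$, various two-element subsets of $[n]$ so that distinct blocks use distinct pairs — this is where the assumption $n > 2$ or $N > 2$ enters, exactly mirroring the branch structure in the proof of Theorem~\ref{thmIdempotentCTB}). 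Each $f_i$ has arity $m_i = (m_i - 1) + 1$, which is precisely enough to ``absorb'' one coordinate of its own block of size $m_i - 1$ plus one leftover slot, so $(C_i, D_i)$-type data is preserved by $f_i$ on block $i$; and I would arrange that $f_i$ is a projection (hence harmless) on all other blocks, so the whole chipped cube is preserved by $f_i$. Doing this for all $i$ shows the arity-$(N-1)$ chipped cube is compatible with $\algA$, hence no $(N-1)$-dimensional cube term.

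For the positive direction — that $\algA$ \emph{does} have a cube term of dimension $N$ — I would invoke Corollary~\ref{corIdempotentObstruction}: it suffices to show no $N$-ary chipped cube is compatible with $\algA$. Suppose one were; write it with distinct pairs $(C_j, D_j)$ and block sizes $n_j$ summing to $N = 1 + \sum_{i=1}^r(m_i-1)$. Applying Corollary~\ref{corCounting} gives sets $U_1, \dots, U_\ell$ covering $[k]$ with $m_i - 1 \ge \sum_{j \in U_i} n_j$. Summing the $r$ largest such inequalities gives $\sum_{j=1}^k n_j = N \le 1 + \sum_{i=1}^r(m_i-1) = N$ — an equality, not yet a contradiction, so the real work is to show the construction of the $f_i$ prevents equality from actually being attainable. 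The key design requirement is therefore that each $f_i$ be chosen so that $(C_j, D_j)$ is a blocker for $(A; f_i)$ for \emph{strictly fewer} blocks than its arity permits in aggregate — i.e. I want each $f_i$ to be a blocker-respecting operation for at most $m_i - 2$ ``units'' of block length, not $m_i - 1$, across any collection of \emph{pairwise distinct} pairs $(C_j, D_j)$. This is arranged by making $f_i$ genuinely essential in all its coordinates on the relevant two-element sets (a near-unanimity-like operation of arity $m_i$ on $\{1,2\}$ when $m_i \ge 3$), so that a single block of size $m_i - 1$ with pair $(\{1\},\{1,2\})$ is destroyed, not preserved, unless $f_i$ is explicitly the projection there. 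The bookkeeping is: the constructed blocker of arity $N-1$ uses $r$ blocks of sizes $m_1 - 1, \dots, m_r - 1$ each getting ``its own'' operation; any attempt to push to arity $N$ adds one more coordinate somewhere, which breaks the fragile matching and exposes an essential coordinate of some $f_i$, killing the chipped cube.

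The main obstacle, and where I expect to spend the most care, is making the two halves of the argument consistent: the operations must be simultaneously \emph{loose enough} that the specific arity-$(N-1)$ chipped cube survives (so I have to check all $\ell$ operations against that one relation, including that $f_i$ acts as a projection off its designated block and absorbs correctly on its block), and \emph{tight enough} that \emph{no} arity-$N$ chipped cube survives (so I have to rule out all chipped cubes, with arbitrary sets $C_j \subsetneq D_j \subseteq [n]$ and arbitrary block sizes summing to $N$). The case split on $n = 2$ versus $n > 2$, and the degenerate sub-cases $N \le 2$, will need to be handled exactly as in Theorem~\ref{thmIdempotentCTB}'s proof — when $n = 2$ the only available pair is $(\{1\},\{1,2\})$ so all $r$ blocks share it, and I must use a symmetrization/folding argument (as in Lemma~\ref{lemCubeNotes}\eqref{itmIntersect} and the $F$-construction in Theorem~\ref{thmIdempotentCTB}) to rule out the $k \le \binom{n}{2}$ collapse that would otherwise let a smaller chipped cube masquerade as an $N$-ary one. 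A clean way to organize the operations is to take, for $m_i \ge 2$, a fixed ``$(m_i-1)$-absorbing'' idempotent operation on each two-element subset and extend it to $[n]$ by, say, a first-coordinate projection wherever the inputs are not confined to a single designated two-element set; verifying that this extension is idempotent and has the claimed behavior is routine but must be written out carefully to close the argument.
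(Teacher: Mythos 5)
The central difficulty is the positive direction (showing $\algA$ \emph{does} have an $N$-dimensional cube term), and your route through it is circular. You plan to rule out $N$-ary compatible chipped cubes using Corollary~\ref{corCounting}, but that corollary has ``$\algA$ has a cube term'' as a \emph{hypothesis}; it is exactly the thing you are trying to prove, so you cannot invoke it yet. The paper sidesteps this entirely: it shows that \emph{no cube term blocker exists}, by directly applying Proposition~\ref{propBlocker} to every candidate pair $\emptyset\neq C\subsetneq D$, and only then invokes Theorem~\ref{thmIdempotentCTB} to get an $N$-dimensional cube term. Verifying ``no blocker'' is a much lighter task than ruling out all $N$-ary chipped cubes, since it reduces to a single escaping-coordinate check per pair of elements $c\in C$, $d\in D\setminus C$. (As a side note, the bookkeeping you worry about is actually fine: from $m_i-1\geq\sum_{j\in U_i}n_j$, summing a covering subfamily of size $r$ yields $\sum_j n_j\leq\sum_{i=1}^r(m_i-1)=N-1$, not your $N\leq 1+\sum(m_i-1)=N$; the stray $+1$ hides the contradiction that would be there if the circularity were fixable.)

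The operations themselves are also not pinned down correctly, and the one concrete extension you propose does not work. If each $f_i$ acts essentially only on a single designated two-element subset and is a ``first-coordinate projection wherever the inputs are not confined to'' that set, then for any pair $\{a,b\}$ that is nobody's designated set, every $f_i$ restricted to $\{a,b\}$ is a projection, and $(\{a\},\{a,b\})$ is then a cube term blocker for the whole algebra -- so $\algA$ would have \emph{no} cube term. The paper avoids this by partitioning \emph{all} $\binom{n}{2}$ pairs into $J_1,\dots,J_r$, making $f_i$ near-unanimity on the pairs in $J_i$, and -- crucially -- making $f_i$ equal to $\max$ (with respect to the fixed linear order on $[n]$) everywhere else. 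The $\max$ branch is what destroys blockers with $c<d$ (since fixing a single coordinate to $c$ never forces $\max$ to output $c$), while the near-unanimity branch destroys blockers with $c>d$. Neither projections nor ``near-unanimity restricted to one set'' alone achieves both. Once you switch to the paper's construction, the negative direction you have (the $(N-1)$-ary chipped cube with one block of size $m_i-1$ per $J_i$ survives all $f_i$'s) goes through essentially as you describe -- that part of your plan matches the paper's.
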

  \begin{proof}
    Without loss of generality assume that all $m_i$ are at least 2 (unary idempotent
    operations are equal to the identity mapping and therefore not interesting).
    
    Let us handle the case $n>2$ and $N=2$ first. Since there is no cube
    term of dimension $N-1=1$, all we need to do is produce an algebra on $n$
    elements with one basic idempotent operation of arity 2 and a Maltsev term
    (i.e. a cube term of dimension 2).  We choose $\algA=([n],f)$ to be an
    idempotent quasigroup of order $n$.  Such a quasigroup exists for all $n>2$
    (see~\cite[Theorem 2.2.3]{design-theory}) and all quasigroups have a
    Maltsev operation. 

    Assume now that $N>2$ and $n\geq 2$.
    Partition the set $\{(a,b)\in [n]^2\colon
    a<b\}$ into $r$ (nonempty, disjoint) sets $J_1,\dots,J_r$ (such
    a partition will exist because $r\leq \binom{n}{2}$). We then define the operations $f_1,\dots,f_r$ as
    follows: For each $i=1,2,\dots,r$, let
    \[
      f_i(a,a,\dots,a,b)=\dots=f_i(b,a,\dots,a,a)=a
    \]
  for all pairs $(a,b)\in J_i$. Otherwise, let $f_i(x_1,\dots,x_{m_1})=\max(x_1,\dots,x_{m_1})$. If $\ell>r$, we choose the remaining operations $f_{r+1},\dots,f_\ell$ 
    to be projections to the first coordinate.

    We now claim that the algebra $\algA=([n];f_1,\dots,f_\ell)$ has an
    $N$-dimensional  cube term, but no $(N-1)$-dimensional cube term. 
    
    To prove
    that $\algA$ has an $N$-dimensional cube term, it is enough to show that
    there is no cube term blocker in $\algA$ and apply
    Theorem~\ref{thmIdempotentCTB}.
    Let  $\emptyset\neq C\subsetneq
    D\subset A$ be a candidate for a blocker. Pick a pair $c\in C$, $d\in D\setminus C$. We will show that there is an $i$ such that
    \[
      f_i(c,d,\dots,d,d)=f_i(d,c,\dots,d,d)=\dots=f_i(d,d,\dots,d,c)=d\not\in C,
    \] 
  which contradicts Proposition~\ref{propBlocker}.
  
    If $c<d$, we do the following. 
    If the map $f_1$ is at least ternary then
    \[
      f_1(c,d,\dots,d,d)=f_1(d,c,\dots,d,d)=\dots=f_1(d,d,\dots,d,c)=\max(c,d)=d.
    \] 
    If $m_1 =2$, then $m_2=2$ (because of $N>2$) 
    and for some $j\in\{1,2\}$ we have 
    $f_{j}(c,d) = f_{j}(d,c) = \max(c,d)=d$.
    
    If $c>d$, then there exists an $i$ such that $(d,c)\in J_i$ and so from the
    definition of $f_i$'s we 
    have 
    \[
      f_i(c,d,\dots,d,d)=f_i(d,c,\dots,d,d)=\dots=f_i(d,d,\dots,d,c)=d.
    \] 

    It remains to show that $\algA$ has no cube term of dimension $N-1$. We do
    this by
    constructing an $(N-1)$-ary elusive relation that is compatible with $\algA$. Pick a pair
    $(a_i,b_i)\in J_i$ for every $i=1,2,\dots,r$ and consider the $N$-tuples
    $\vect a=a_1^{\bf m_1-1}a_2^{\bf m_2-1}\dots a_r^{\bf m_r-1}$ and
    $\vect b=b_1^{\bf m_1-1}b_2^{\bf m_2-1}\dots b_r^{\bf m_r-1}$. We claim that all
    operations $f_1,\dots,f_r$ map
    $R=\{\chi_I(\vect a,\vect b)\colon I\neq \emptyset\}$
  into itself. This will conclude the proof, since $R$ is elusive
    (as witnessed by $\vect a\not\in R$).
    
    We will show that $f_1$ maps $R$ into itself; the proof for
    $f_2,\dots,f_r$ is similar. Since $f_1$ is conservative (it always returns one of
    its arguments) we only have to show that there is no $(N-1)\times m_1$
    matrix $M$ with columns from $R$ such that $f_1(M)=\vect a$. Since the
    first $m_1-1$ entries of $\vect a$ are $a_1$ and $f_1$ only returns
    $a_1$ when at most one of inputs differs from $a_1$, there can be
    at most $m_1-1$ entries different from $a_1$ in the top $(m_1-1)\times m_1$
    submatrix of $M$. Since $M$ has $m_1$ columns there is a column $\vect c$ of $M$ that
    begins with $a_1^{\bf m_1}$. Let us examine this column $\vect c$ more
    closely.
    
    Since $\vect c\in R$, there has to be $i>1$ such
    that the $i$-th block of $\vect c$ contains $b_i$.
    Since $b_i>a_i$ and $(a_i,b_i)\not \in J_1$ (this follows from $J_i\cap J_1\neq
    \emptyset$), $f_1$ applied to $b_i$ and any combination of $a_i$'s and
    $b_i$'s will return $b_i$. Therefore, the $i$-th block of $f_1(M)$ also
    contains $b_i$, yielding $f_1(M)\neq \vect a$ as we needed.
    
  \end{proof}
  What happens to the claims of Theorem~\ref{thmIdempotentCTB} when $|A|,N\leq 2$?  If $N=1$ or $|A|=1$ the algebra $\algA$ is
  not very interesting since it contains only projections. When $|A|=N=2$, we
  can have algebras that have a cube term, but no cube term of dimension 2:
  Consider $\algA=(\{0,1\},\wedge,\vee)$. This algebra is compatible with the binary elusive relation $\{(0,0),(0,1),(1,1)\}$ (elusive tuple $(1,0)$) and so does not have a cube term of dimension 2 
  (known as the Maltsev operation). However, $\algA$ has a ternary near unanimity term
  and thus a cube term of dimension 3.


  \section{Deciding cube terms in the idempotent case}
  In this section, we describe a polynomial time algorithm that
  decides whether the input idempotent algebra $\algA$ has a cube term.. What is more, if $\algA$ has no cube term, the algorithm will prove this by outputting a cube term blocker for $\algA$.

  \begin{algorithm}[t]
    \KwData{Idempotent algebra $\algA$ given by tables of its basic operations}
    \KwResult{A cube term blocker $(C,D)$ if $\algA$ has one, ``No''
    otherwise.}
    \For{$c\in A$\label{line:for}} {
      Let $S:=\{c\}$\;
	\While {$S\neq A$\label{line:while}}{
	Choose a $d\not\in S$ so that $\Sg(c,d)$ is \label{line:choose} (inclusion)
	  minimal among all such choices of $d$.\label{stepInnerFirst}\;
	\If {$(S\cap \Sg(c,d), \Sg(c,d))$ is a blocker (tested as in Remark~\ref{remTestBlocker})  \label{line:test}}
	  {\Return{$(S\cap \Sg(c,d), \Sg(c,d))$}\label{line:yes}\;}
	  \Else {Let $S:=S\cup \Sg(c,d)$ (note that $S$ need not 
			  be a subalgebra of $\algA$)\;	}
	}
	  \label{line:endwhile}
	}
	\label{line:iterate}
	\Return{No}\;
    \caption{Deciding the existence of a cube term blocker}\label{algBlocker}
  \end{algorithm}
  The idea of Algorithm~\ref{algBlocker} is to look for an, in a sense, ``minimal'' cube term blocker $(C,D)$. If an idempotent algebra $\algA$ contains a cube term blocker $(C,D)$, then we can try to make $C$ and $D$ smaller. Take $c\in C$ and $d\in D\setminus C$; it is easy to verify that the set $(C\cap \Sg(c,d),\Sg(c,d))$ will then also be a cube term blocker. 
  
  It is not too computationally expensive to guess $c,d\in A$ and we can efficiently test if a pair of subsets of $\algA$ is a cube term blocker by looking at basic operations of $\algA$ (see Remark~\ref{remTestBlocker}); hence the only remaining issue is how to construct the right set $C\cap \Sg(c,d)$.
  
  A rough outline of our method for constructing $C\cap\Sg(c,d)$ is as follows: We guess $c\in A$ and then grow a set $S$ as slowly as possible, starting with $S=\{c\}$. In each step we either find a cube term blocker of the form $(S\cap D,D)$, or we grow $S$, so after at most $|A|$ steps we reach $S=A$ and stop.
  
  \begin{theorem}\label{thmIdempotentCTalg}
  Given an idempotent algebra $\algA$ as input, 
  Algorithm~\ref{algBlocker} will in a  polynomial time either find a cube term blocker in $\algA$ or correctly conclude that $\algA$ has a cube term (and thus $\algA$ contains no cube term blocker).
  \end{theorem}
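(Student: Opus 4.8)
The plan is to establish correctness and polynomial running time separately. For correctness, I would first argue that every output of the algorithm is genuinely a cube term blocker: the only place the algorithm returns a pair $(C,D)$ is on line~\ref{line:yes}, and there it has just been verified (via Remark~\ref{remTestBlocker}, which is sound) that $(S\cap\Sg(c,d),\Sg(c,d))$ is a blocker; in particular $\emptyset\neq S\cap\Sg(c,d)\subsetneq\Sg(c,d)$ and both are subuniverses. Hence if the algorithm outputs a pair, $\algA$ has a blocker and (by Proposition~\ref{propMMMblocker}) no cube term. The substantive direction is: if $\algA$ has a cube term blocker at all, the algorithm finds one rather than returning ``No''.

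\smallskip
For that direction, suppose $\algA$ has a cube term blocker $(C_0,D_0)$. The key reduction, already sketched before the theorem, is that for any $c\in C_0$ and $d\in D_0\setminus C_0$ the pair $(C_0\cap\Sg(c,d),\Sg(c,d))$ is again a cube term blocker: $\Sg(c,d)\subseteq D_0$, $c\in C_0\cap\Sg(c,d)$ so it is nonempty, $d\notin C_0$ witnesses $C_0\cap\Sg(c,d)\subsetneq\Sg(c,d)$, and compatibility of $C_0\cap\Sg(c,d)$ with $\algA$ follows since both $C_0$ and $\Sg(c,d)$ are subuniverses (Proposition~\ref{prop:Galois}(4)); the blocker coordinate for each $f_i$ is inherited from $(C_0,D_0)$. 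Now fix this $c$ and follow the iteration of the outer \textbf{for} loop with this value. I claim the following invariant holds at the top of the \textbf{while} loop: $S\cap C_0' = S$ for a suitable translate, more precisely that $S\subseteq C_0$ throughout, where we may assume (replacing $(C_0,D_0)$ by a minimal blocker containing $c$) that $C_0$ is chosen suitably. Let me phrase the invariant cleanly: at every entry to the \textbf{while} loop, $S\subseteq D_0$ and $S\setminus C_0$ is ``small'' --- in fact the right invariant is simply $S\subseteq C_0$. Initially $S=\{c\}\subseteq C_0$. In an iteration we pick $d\notin S$ with $\Sg(c,d)$ inclusion-minimal. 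If $d\notin C_0$ then $(C_0\cap\Sg(c,d),\Sg(c,d))$ is a blocker by the reduction above; but then, since $c\in S$ and $S\subseteq C_0$, we have $S\cap\Sg(c,d)\subseteq C_0\cap\Sg(c,d)$, and one checks that $(S\cap\Sg(c,d),\Sg(c,d))$ is also a blocker (shrinking the lower set of a blocker while keeping it a nonempty subuniverse that still contains the ``absorbing'' witness preserves the blocker property --- this is where I expect to need a short lemma, see below). So the test on line~\ref{line:test} succeeds and the algorithm returns. If on the other hand $d\in C_0$, then $\Sg(c,d)\subseteq C_0$, so $S\cup\Sg(c,d)\subseteq C_0$ and the invariant is maintained. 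Since $C_0\subsetneq A$, we can never reach $S=A$ without first encountering some $d\notin C_0$; thus the loop exits through line~\ref{line:yes}, not by $S=A$, and the algorithm does not reach the final ``No''. This proves that the algorithm returns a blocker whenever one exists, hence it returns ``No'' only when $\algA$ has a cube term.

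\smallskip
The main obstacle is the little lemma just flagged: that if $(C,D)$ is a blocker and $C'$ is a nonempty subuniverse with $C'\subseteq C$ such that, for each basic $f_i$, the coordinate $j$ that absorbs $C$ into $C$ also absorbs $C'$ into $C'$ --- equivalently, that $(C'\cap\Sg(c,d),\Sg(c,d))$ inherits a workable blocker coordinate from $(C,D)$. This is not automatic for an arbitrary subuniverse $C'\subseteq C$, but it \emph{is} automatic in our situation because $C'\cap\Sg(c,d)$ contains $c$, the element that was used to define $\Sg(c,d)$, together with the fact that we intersect with $\Sg(c,d)$ on \emph{both} sides. Concretely, for the blocker coordinate $j$ of $f_i$ witnessing $(C,D)$ we have $f_i(d_1,\dots,d_{j-1},x,d_{j+1},\dots,d_{m_i})\in C$ for $x\in C$, $d_\bullet\in D$; restricting all arguments to $\Sg(c,d)\subseteq D$ and $x$ to $C'\cap\Sg(c,d)\subseteq C$, the output lies in $C\cap\Sg(c,d)$, which equals $C'\cap\Sg(c,d)$ once we have arranged $C'\supseteq C\cap\Sg(c,d)$ --- which indeed holds with $C'=C_0$ and $C\cap\Sg(c,d)=C_0\cap\Sg(c,d)$. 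So the correct bookkeeping is to track the pair $(C_0\cap\Sg(c,d),\Sg(c,d))$ directly rather than introducing a separate $C'$; with $S\subseteq C_0$ maintained, $S\cap\Sg(c,d)$ sits between the nonempty $\{c\}$ and $C_0\cap\Sg(c,d)$, and if it ever equals $C_0\cap\Sg(c,d)$ the test fires, while if it is strictly smaller we must argue it is still a blocker --- which again follows because shrinking the lower set of a blocker to any nonempty subuniverse containing a full ``absorbing'' witness set keeps it a blocker, and $S\cap\Sg(c,d)$ is closed under the operations of $\algA$... wait, it need not be. This is the genuine subtlety: $S$ is not a subalgebra. The resolution is to observe that the \emph{first} time line~\ref{line:test} is reached with a $d\notin C_0$, we have $S\subseteq C_0$, hence $S\cap\Sg(c,d)\subseteq C_0\cap\Sg(c,d)$, and since $(C_0\cap\Sg(c,d),\Sg(c,d))$ is a blocker, Proposition~\ref{propBlocker} furnishes for each $f_i$ an absorbing coordinate; then $S\cap\Sg(c,d)$, while possibly not closed, still satisfies $f_i(\Sg(c,d),\dots,S\cap\Sg(c,d),\dots,\Sg(c,d))\subseteq C_0\cap\Sg(c,d)$, so the pair $(S\cap\Sg(c,d),\Sg(c,d))$ fails the \emph{subuniverse} requirement of Definition~\ref{defBlocker} unless $S\cap\Sg(c,d)$ happens to be closed. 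Hence the clean invariant is actually stronger: maintain that $S$ equals a subuniverse on each $\Sg(c,d)$-slice, e.g. that $S\cap\Sg(c,d')$ is a subuniverse for every previously chosen $d'$; I would prove this using the inclusion-minimal choice of $\Sg(c,d)$ on line~\ref{line:choose}, which forces $\Sg(c,d)\setminus S$ to be ``$D\setminus C$''-like and makes $S\cap\Sg(c,d)=\Sg(c',d'')$-closures match up. This minimality argument is exactly the crux and is where the bulk of the proof's work lies.

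\smallskip
Finally, for the running time: the \textbf{for} loop runs $|A|$ times; the \textbf{while} loop runs at most $|A|$ times per iteration since $|S|$ strictly increases (each $d\notin S$ adds at least $d$ to $S$) and $S\subseteq A$; each pass computes at most $|A|$ subalgebras $\Sg(c,d)$ (in time $O(m|\algA|)$ each, by the generation algorithm recalled in the Preliminaries) to find the minimal one, and performs one blocker test in time $O(m|\algA|)$ by Remark~\ref{remTestBlocker}. So the total is $O(|A|^3\cdot m|\algA|)$, polynomial in $|\algA|$, as claimed.
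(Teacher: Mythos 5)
Your outline correctly identifies the two load-bearing pieces of the proof---soundness of the output, and the ``first time $d$ escapes'' argument---and you are right that the crux is whether the pair the algorithm actually tests, namely $(S\cap\Sg(c,d),\Sg(c,d))$, is a blocker. But as written there are two gaps, and you explicitly stop short of closing the second one.

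First, the reduction you invoke is applied outside its hypotheses. You prove that $(C_0\cap\Sg(c,d),\Sg(c,d))$ is a blocker when $d\in D_0\setminus C_0$, but then apply it whenever the algorithm picks a $d\notin C_0$. Nothing forces that $d$ to lie in $D_0$: the algorithm chooses $d$ to minimize $\Sg(c,d)$ globally, and this minimizer may lie entirely outside $D_0$, in which case $\Sg(c,d)\not\subseteq D_0$ and the blocker coordinate for $f_i$ is no longer inherited (Proposition~\ref{propBlocker} gives you absorption only for arguments in $D_0$). Second, even granting that reduction, you never show $S\cap\Sg(c,d)$ equals the set you want; you flag that $S$ is not a subalgebra, suggest that the inclusion-minimality on line~\ref{line:choose} ``makes the closures match up,'' and then declare this ``is exactly the crux and is where the bulk of the proof's work lies.'' That sentence is the proof's missing content.

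The paper closes both gaps with one extra choice that you don't make: it takes the blocker $(C,D)$ with $D$ \emph{inclusion-minimal among all blockers}, fixes $c\in C$, and instead of the (too strong, and not actually needed) invariant $S\subseteq C$ it tracks the weaker invariant $S\cap D\subseteq C$, waiting for the first $d$ with $\Sg(c,d)\cap D\not\subseteq C$. At that moment the two minimalities interact: if $\Sg(c,d)\cap D$ were a proper subset of $\Sg(c,d)$ one could pick a better $d'\in(\Sg(c,d)\cap D)\setminus C$, so $\Sg(c,d)\subseteq D$; then $d\in D\setminus C$; then minimality of $D$ forces $\Sg(c,d)=D$. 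Finally $S\cap D=C$ follows from two inclusions: $S\cap D\subseteq C$ is the invariant, and $C\subseteq S$ because any $a\in C\setminus S$ would give $\Sg(c,a)\subseteq C\subsetneq D=\Sg(c,d)$, contradicting the minimal choice on line~\ref{line:choose}. This is where the minimality of $D$ and the minimality of $\Sg(c,d)$ are both genuinely used; without choosing $D$ minimal you cannot conclude $\Sg(c,d)=D$, and without that equality the set the algorithm tests has no reason to be the one you can certify is a blocker. Your running-time analysis is correct (polynomial), and the soundness direction is fine.
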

  \begin{proof}
  Examining the pseudo-code of Algorithm~\ref{algBlocker}, it is
  obvious that the algorithm runs in polynomial time -- in the RAM model of
  computation, the time complexity is $O(m|A|^2|\algA|)$ (where $m$ is the maximum arity of a basic operation of $\algA$). It remains to prove
  the correctness of the algorithm.
  
    Since the algorithm tests to see if each potential output is a blocker
    (line~\ref{line:test}), it follows that if Algorithm~\ref{algBlocker} outputs a pair of sets, then this
   pair is a blocker. It remains to show that the algorithm will find a blocker when $\algA$ contains one.

    To this end, assume that $(C,D)$ is a blocker in $\algA$ such that $D$ is
    (inclusion) minimal. Let us fix some $c\in C$. 
    We will now show that when this $c$ is chosen as the value of the variable $c$ in the for-loop
    (lines~\ref{line:for}--\ref{line:iterate}) of 
    Algorithm~\ref{algBlocker} the algorithm will find a blocker. 

    The only way finding a blocker can fail for this particular $c$ is if the inner loop of the algorithm (steps
    \ref{line:while}--\ref{line:endwhile}) eventually adds all
    the elements of $A$ to $S$. The algorithm might find a blocker by ``accident,'' in which case we are done. Assume that this does not happen and let us wait for the first time when on line~\ref{line:choose} a $d\not\in S$ is chosen such that $\Sg(c,d)\cap D\not\subset C$. In other words, we assume that $S\cap D\subset C$ and that $\Sg(c,d)\setminus S$ has a nonempty intersection with $D\setminus C$.
    
    Recall that $D$ is a subuniverse of $\algA$.
    Were $\Sg(c,d)\cap D$ smaller than $\Sg(c,d)$, we could choose an element $d'\in (\Sg(c,d)\cap D)\setminus C$ instead of $d$ and get $d'\not\in S$ with a smaller $\Sg(c,d')\subset D$. Hence from the minimality of $\Sg(c,d)$ we see that $\Sg(c,d)\subset D$ and so $d$ lies in $D$. Since $\Sg(c,d)\not\subseteq C$, the element $d$ must lie in $D\setminus C$. Finally, the minimality of $D$ then gives us $\Sg(c,d)=D$.

    We now claim that $S\cap D=C$. If we prove this, we will be done: The pair
    of sets $( S\cap \Sg( c,d), \Sg( c,d))=(S\cap D, D)=(C,D)$ is a blocker,
    meaning that, instead of adding $d$ to $S$, Algorithm~\ref{algBlocker} will reach
    line~\ref{line:yes} and output $(C,D)$.

We prove $S\cap D=C$ by showing two inclusions. We have $S\cap D\subset C$ by assumption. To see
    $S\cap D\supseteq C$, consider what would happen if there was $a\in C\setminus (S\cap
    D)=C\setminus S$. Then $a\not\in S$
    and yet $\Sg(c,a)\subset C\subsetneq D=\Sg(c,d)$. Therefore, in
    step~\ref{stepInnerFirst}, the set $D=\Sg(c,d)$
    was not minimal and $a$ should have been chosen instead of $d$. This contradiction concludes our proof.


  \end{proof}
  \begin{remark}
    As a
  side note, it turns out that modifying our algorithm to only look for ``nice''
  blockers is tricky: For example it is an $\compNP$-complete problem to decide if,
  given an idempotent algebra $\algA$ and an element $b\in A$, there exists 
  a blocker $(C,D)$ such that $b\in D\setminus C$~\cite{barto-kazda-absorption}.
  \end{remark}
  \section{Cube terms in general algebras}
  Let now $\algA=(A;f_1,\dots,f_\ell)$ be an algebra that is not idempotent. Assuming that $\algA$
  has a cube term, what is the smallest dimension of a cube term that
  $\algA$ has? It is easy to see that
  $\algA$ has a cube term of dimension $n$ if and only if the idempotent reduct of $\algA$
  has an cube term of dimension $n$. We also know that the minimal dimension of a
  cube term is the same as the minimal arity of a near unanimity term -- if
  $\algA$ has a near unanimity, that is.
  
  It turns out that we can recover a bit more from the idempotent case:

  \begin{lemma}\label{lemCCbeNonIdempotent} Let $n\geq 2$ be an integer and
    $\algA$ an algebra containing 
    a cube term of dimension $n+1$, but no cube term of
    dimension $n$ (where $n$ is a positive integer). Then there exists an $n$-ary elusive relation compatible
    with $\algA$.
  \end{lemma}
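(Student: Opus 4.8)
The plan is to bootstrap from the idempotent case using the prefix trick of Lemma~\ref{lemMatrix}, essentially the same device that turns non-idempotent algebras into idempotent ones. Since $\algA$ has a cube term of dimension $n+1$ but not of dimension $n$, Lemma~\ref{lemMatrix} gives us tuples $\vect a,\vect b\in A^n$ with $\seq{A}\vect a\notin\Sg(\{\seq{A}\chi_I(\vect a,\vect b)\colon I\neq\emptyset\})$. Write $S=\Sg_{\algA^{n+|A|}}(\{\seq{A}\chi_I(\vect a,\vect b)\colon I\neq\emptyset\})$. First I would set $R$ to be obtained from $S$ by fixing the first $|A|$ coordinates to the entries of $\seq{A}$ and projecting them out, i.e.\ $R=\{\vect r\in A^n\colon \seq{A}\vect r\in S\}$. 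This $R$ is a relation on $A^n$.

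The key points to establish are: (1) $R$ is compatible with $\algA$; (2) $R$ is elusive, with $\vect a,\vect b$ as witnessing tuples. For (1), note that $R$ is a primitive positive definition over the compatible relation $S$ together with the singleton unary relations $\{(\seq{A}_i)\}$; but singleton relations need not be compatible with a non-idempotent $\algA$. However, the crucial observation (the same one highlighted before Lemma~\ref{lemMatrix}) is that every tuple of $S$ beginning with $\seq{A}$ is obtained from the generators using an \emph{idempotent} term of $\algA$: a term $t$ with $t(\seq{A},\dots,\seq{A})=\seq{A}$ fixes each element of $A$, hence is idempotent. So if $\vect{r_1},\dots,\vect{r_m}\in R$ and $f$ is an $m$-ary basic operation, then $f$ applied coordinatewise to the tuples $\seq{A}\vect{r_j}\in S$ yields a tuple in $S$ whose prefix is $f(\seq{A},\dots,\seq{A})$, and I want this prefix to again be $\seq{A}$. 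This holds because $\seq{A}\vect{r_j}$ is produced from the generators by an idempotent term $t_j$, and composing $f$ with the $t_j$'s gives an idempotent term of $\algA$, which must fix $\seq{A}$ coordinatewise. Hence $f(\seq{A}\vect{r_1},\dots,\seq{A}\vect{r_m})=\seq{A}\vect s$ for some $\vect s$, so $\vect s\in R$, proving $R\leq\algA^n$.

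For (2): each $\chi_I(\vect a,\vect b)$ with $I\neq\emptyset$ lies in $R$ since $\seq{A}\chi_I(\vect a,\vect b)$ is a generator of $S$; and $\vect a=\chi_\emptyset(\vect a,\vect b)\notin R$ is exactly the statement $\seq{A}\vect a\notin S$ we started from. Thus $R$ is $n$-ary, compatible, and elusive, as required. The main obstacle is point~(1): one must resist the temptation to invoke Proposition~\ref{prop:Galois}(5), which requires idempotence, and instead argue directly via idempotent witnessing terms. I would phrase this carefully, possibly isolating as a sub-claim the statement that every tuple of $S$ starting with $\seq{A}$ is a value of an idempotent term of $\algA$ on the generators, and that the class of such tuples is closed under the (coordinatewise-extended) basic operations of $\algA$.
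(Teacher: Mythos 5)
Your construction of $R$ by ``projecting away'' the $\seq{A}$ prefix has a genuine gap in the compatibility argument, and the flaw is precisely the step you flag as the crux. You claim that if $\seq{A}\vect{r_j}$ is the value of an idempotent term $t_j$ on the generators, then $f\circ(t_1,\dots,t_m)$ is again idempotent. That is false when $f$ itself is not idempotent: plugging the constant tuple $(a,\dots,a)$ into the composite yields $f(t_1(a,\dots,a),\dots,t_m(a,\dots,a))=f(a,\dots,a)$, which equals $a$ only if $f$ is idempotent. Consequently the tuple $f(\seq{A}\vect{r_1},\dots,\seq{A}\vect{r_m})\in S$ has prefix $f(\seq{A},\dots,\seq{A})$, which need not be $\seq{A}$, and nothing forces its last $n$ coordinates to lie in your $R$. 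In short, $R=\{\vect r: \seq{A}\vect r\in S\}$ is not shown to be (and in general is not) a subuniverse of $\algA^n$.

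A telling symptom of the gap is that your argument never uses the hypothesis that $\algA$ \emph{has} a cube term of dimension $n+1$. The paper's proof uses this hypothesis essentially: it invokes Theorem~\ref{thmEdgeTerm} to get an $(n+1)$-dimensional edge term $t$, then considers the shortest prefix $\vect q$ for which $\vect q\vect a\not\in\Sg(\{\vect q\chi_I(\vect a,\vect b):I\neq\emptyset\})$. Starting from $\vect q=\seq{A}$, one shows that if $\vect q=\vect rs$ is nonempty, then minimality of $\vect q$ produces a $u$ with $\vect ru\vect a$ in the span, and applying the edge term $t$ to a carefully chosen $(n+2)$-column matrix lands one on $\vect rs\vect a$, contradicting the choice of $\vect q$. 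Thus $\vect q$ is empty, and $\Sg(\{\chi_I(\vect a,\vect b):I\neq\emptyset\})$ is the desired $n$-ary elusive compatible relation. The edge term is exactly what lets you peel off the prefix one coordinate at a time; projecting it out all at once, as you attempt, is not available for non-idempotent $\algA$.
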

  \begin{proof}
    Since $\algA$ does not have a cube term of dimension $n$, neither does the
    idempotent reduct $\algA_{idmp}$ of $\algA$. Therefore, there exist tuples
    $\vect a,\vect b\in A^n$ such that $\vect a$ does not lie in the subalgebra
    of $\algA_{idmp}$ generated by $\{\chi_I(\vect a,\vect b)\colon
    I\neq\emptyset\}$. 

    Translating the last sentence from $\algA_{idmp}$ back to $\algA$, we obtain that
    \[
      \seq{A}\vect a\not\in \Sg_{\algA^{|A|+n}}(\{\seq{A}\chi_I(\vect a,\vect b)\colon
      I\neq\emptyset\}).
    \]
  Let now $\vect q$ be the shortest
    tuple of elements of $A$ for which 
 \[
      \vect q\vect a\not\in \Sg_{\algA^{|\vect q|+n}}(\{\vect q\chi_I(\vect a,\vect b)\colon
    I\neq\emptyset\}).
    \]
    By the above reasoning, we have $|\vect q|\leq |A|$. We will show that
    in fact $|\vect q|=0$, proving the Lemma. Assume that $\vect q$ is of length at least
    one; we will show how this leads to a contradiction. 
    
    Let $\vect q=\vect r s$ for a suitable tuple $\vect r$
    and $s\in A$. Denote by $E$ the relation $\Sg(\{\vect q\chi_I(\vect
    a,\vect b)\colon
    I\neq\emptyset\})$. Since we took $\vect q=\vect r s$ shortest possible, we must have
    \[
\vect r\vect a\in \Sg(\{\vect r\chi_I(\vect a,\vect b)\colon
    I\neq\emptyset\}),\quad
\vect rs\vect a\not\in \Sg(\{\vect rs\chi_I(\vect a,\vect b)\colon
    I\neq\emptyset\})=E.
    \]
  We conclude that there exists $u\in A$ such that $\vect r u \vect a\in E$.

    Now since $\algA$ has a cube term of dimension $n+1$, it also has an edge
    term $t$ of dimension $n+1$ (by Theorem~\ref{thmEdgeTerm}). We apply
    $t$ to the following matrix of tuples:
    \[
    \left(\begin{matrix}
    \vect r&\vect r&\vect r&\dots&\vect r&\vect r\\
    s&s&s&\dots&s&u\\
      \chi_{\{1,2\}}(\vect a,\vect b)&\chi_{\{1\}}(\vect a,\vect b)&
	    \chi_{\{2\}}(\vect a,\vect b)&\dots&\chi_{\{n\}}(\vect a,\vect b)
	&\vect a\\
    \end{matrix}\right)
    \]
  This matrix has $n+2$ columns, all of which are in $E$ (the tuples in the first $n+1$ columns 
    are among the tuples that witness elusiveness of $E$ 
    while the last tuple is $\vect r u\vect a\in E$ by the choice of $u$). Thus
    $t$ applied to the matrix outputs a member of $E$. However, using the
    identities for edge terms, one
    can easily verify that the output tuple is in fact $\vect r s \vect a$, a
    contradiction with $\vect rs\vect a\not\in E$.
  \end{proof}

  In the rest of this Section, let $R$ be an $n$-ary relation that is inclusion minimal among all $n$-ary elusive relations compatible with $\algA$. Moreover, we order the coordinates of $R$ so that the two
  tuples that witness the elusiveness of $R$ are of the form
  \begin{align*}
    \vect a&=a_1^{\bf n_1}a_2^{\bf n_2}\dots a_k^{\bf n_k}\\
    \vect b&=b_1^{\bf n_1}b_2^{\bf n_2}\dots b_k^{\bf n_k},
  \end{align*}
   with $(a_i,b_i)\neq (a_j,b_j)$ for $i\neq j$. Call such a pair elusive tuple of \emph{type} $(n_1,n_2,\dots,n_k)$. 
   
   The
   numbers $n_1,\dots,n_k$ give us a partition of $n$ into intervals of
   consecutive integers. As before, we will call the members of this partition
   blocks. The $i$-th block, which we again denote by $B_i$, consists of the indices 
   $B_i=\{j\in \en\colon n_1+\dots+n_{i-1}<j\leq n_1+\dots+n_{i}\}$.
 
%

  \begin{definition}
  Assume that we have a fixed $\algA$, $n_1,n_2,\dots,n_k\in\en$, and $\vect
  a,\vect b\in A^n$ are two tuples of type $(n_1,n_2,\dots,n_k)$.
    Let $D_1,\dots,D_k$ be such that there exists a term $t$ of $\algA$ so that
    $D_i=t(\{a_i,b_i\},\{a_i,b_i\},\dots,\{a_i,b_i\})$ for each $i$, and let
    $C_i\subset D_i$ for each $i$.
    
    Then we define a \emph{blob} of type $(n_1,\dots,n_k)$, given by the sets
    $D_1,\dots,D_k$ and $C_1,\dots,C_k$, as
    \[
      \left\llbracket\begin{matrix}
	C_1&|&D_1^{n_1}\\
	C_2&|&D_2^{n_2}\\
	   &\vdots&\\
	C_k&|&D_k^{n_k}
  \end{matrix}\right\rrbracket
      =
\left\{\vect v\in
	D_1^{n_1}\times D_2^{n_2}\times\dots\times D_k^{n_k} \colon \forall
	i,\,  C_i\subset \{v_j\colon j\in B_i\}\right\}.
	\]
    A union of a family of blobs of type $(n_1,n_2,\dots,n_k)$ is called a \emph{sponge} of type
    $(n_1,n_2,\dots,n_k)$. 
  \end{definition}
  Sponges and blobs are distant relatives of chipped cubes (for example, one
  can write any chipped cube in the form of a sponge). We have shown that
  minimal compatible elusive relations are chipped cubes in the idempotent case; for general algebras, we want
  to show that the minimal compatible elusive relations are sponges (after a suitable reordering of
  coordinates). Before we do that, though, we need to obtain some tools.

  Blobs and sponges carry with them information about blocks of coordinates, so
  we can talk about, say, the second block of coordinates of a blob $\Gamma$.
  We will be talking quite a bit about ``the set of values appearing in a certain
  block of a tuple,'' so let us introduce a short name for this concept:

  \begin{definition}
  For $\vect d\in A^{k}$, we denote by $\cont(\vect d)$ the \emph{content} of
    $\vect d$, which is defined as the set of all elements of $A$ that appear in $\vect d$, i.e.
    $\cont(u_1u_2\dots u_k)=\{u_1,u_2,\dots,u_k\}$. If $\vect v\in A^n$ is a
    tuple and we have a partition of $[n]$ into blocks $B_1,\dots,B_k$ then the content of the
    $i$-th block of $\vect v$ is the set $\{v_j\colon j\in B_i\}$.
  \end{definition}
  In the language of content, a blob is a set of $\vect v\in D_1^{n_1}\times D_2^{n_2}\times\dots\times D_k^{n_k}$ such that for each
  $i=1,\dots,k$, the
  content of the $i$-th block of $\vect v$ contains $C_i$.

 It will also be useful to remap entries of tuples in a prescribed way.
 
 \begin{definition}\label{def:shuffle}
  For a map $\eta\colon [n]\to [n]$ and a tuple $\vect a=(a_1,\dots,a_n)\in A^n$, define the $\eta$-image
  of $\vect a$ as
  $\vect a^\eta=(a_{\eta(1)},a_{\eta(2)},\dots,a_{\eta(n)})$.
  Define the $\eta$-image of $R\subset A^n$, denoted by $R^\eta$,
  as the set of $\eta$-images of all members of $R$. We say that $R$ is
  $\eta$-invariant if $R^\eta\subset R$.
 \end{definition}

  Note that $\eta$ need not be a permutation. 
  It is easy to see that if $\vect{b_1},\dots,\vect{b_k}$ are
  $n$-tuples then $\left(t(\vect{b_1},\dots,\vect{b_k})\right)^\eta
  =t(\vect{b_1}^\eta,\dots,\vect{b_k}^\eta)$. Because $\eta$-images and operations 
  commute, $\eta$-images play nicely with subalgebras:
 
  \begin{observation}\label{obsEtaInvariant}
    Let $\algA$ be an algebra, $n\in \en$, $\eta\colon [n]\to [n]$, $R,S\subset A^n$. Then
    \begin{enumerate}[(a)]
      \item If $R$ is compatible with $\algA$, then $R^\eta$ is compatible with $\algA$,
      \item if $R=\Sg(S)$, then $R$ is $\eta$-invariant if and
  only if $\vect s^\eta\in R$ for each $\vect s\in S$, 
      \item if $S$ is a chipped cube, a sponge or a blob and $\eta$ is a \emph{permutation} that preserves
	the blocks of $S$ (i.e. $\eta$ sends each block $B_i$ of $S$ to itself), then $S^\eta=S$.
      \item if $S$ is a chipped cube, $R=\Sg(S)$, and     $\eta$ is a \emph{permutation} of the coordinates of $S$ that preserves the blocks of 
	$S$ then $R$ is $\eta$-invariant.
    \end{enumerate}
  \end{observation}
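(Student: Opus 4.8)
The plan is to prove the four parts of Observation~\ref{obsEtaInvariant} in order, each by a short direct computation, relying on the key algebraic fact that $\eta$-images commute with term operations: $\left(t(\vect{b_1},\dots,\vect{b_k})\right)^\eta = t(\vect{b_1}^\eta,\dots,\vect{b_k}^\eta)$, which is already noted in the text just before the statement.

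For part (a), I would argue that $R^\eta$ is closed under the basic operations of $\algA$. Take a basic $m$-ary operation $f$ and tuples $\vect{r_1}^\eta,\dots,\vect{r_m}^\eta \in R^\eta$ with each $\vect{r_j}\in R$. Then $f$ applied coordinatewise to these gives $f(\vect{r_1}^\eta,\dots,\vect{r_m}^\eta) = \left(f(\vect{r_1},\dots,\vect{r_m})\right)^\eta$ by the commutation identity, and $f(\vect{r_1},\dots,\vect{r_m})\in R$ since $R$ is compatible, so the result lies in $R^\eta$. Hence $R^\eta \leq \algA^n$.

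For part (b), one direction is trivial: if $R$ is $\eta$-invariant then in particular $\vect s^\eta \in R$ for every $\vect s\in S\subset R$. For the converse, suppose $\vect s^\eta\in R$ for all $\vect s\in S$. Every element of $R = \Sg(S)$ can be written as $t(\vect{s_1},\dots,\vect{s_k})$ for some term $t$ of $\algA$ and $\vect{s_1},\dots,\vect{s_k}\in S$ (here I use that the generated subalgebra consists precisely of term-values on the generators). Applying the commutation identity, $\left(t(\vect{s_1},\dots,\vect{s_k})\right)^\eta = t(\vect{s_1}^\eta,\dots,\vect{s_k}^\eta)$, and since each $\vect{s_i}^\eta\in R$ and $R$ is a subalgebra, this value lies in $R$. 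Thus $R^\eta\subset R$.

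For part (c), I would observe that a chipped cube, sponge, or blob is defined purely in terms of the contents of its blocks (membership is a conjunction of conditions, one per block $B_i$, each referring only to the multiset or set of entries indexed by $B_i$): for a blob, $\vect v$ is a member iff $v_j\in D_i$ for all $j\in B_i$ and $C_i\subset\{v_j\colon j\in B_i\}$ for each $i$; chipped cubes and sponges are likewise block-content conditions (a sponge being a union of blobs). A permutation $\eta$ that fixes each block setwise permutes the entries within each $B_i$ without changing which block an entry belongs to, hence preserves the content of every block; therefore $\vect v\mapsto\vect v^\eta$ is a bijection of the relation onto itself, giving $S^\eta = S$. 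The only mild care needed is to note that $\eta$ being a permutation makes this a bijection rather than merely an inclusion, so equality (not just $\subset$) holds. Part (d) is then immediate by combining (c) and (b): if $S$ is a chipped cube, $R=\Sg(S)$, and $\eta$ is a block-preserving permutation, then by (c) every $\vect s\in S$ satisfies $\vect s^\eta\in S\subset R$, and so by (b) $R$ is $\eta$-invariant. I do not expect any real obstacle here; the statement is a routine bookkeeping lemma, and the only point deserving a sentence of care is the bijectivity argument in (c) (permutation versus arbitrary map), since parts (a), (b), (d) only need inclusions.
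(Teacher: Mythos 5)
Your proof is correct and follows essentially the same route as the paper: (a) and (b) by the commutation of $\eta$-images with term operations, (c) by the observation that membership in a chipped cube, blob, or sponge depends only on block contents (with the explicit note on bijectivity of $\eta$, which the paper leaves implicit), and (d) by combining the earlier parts. The only cosmetic difference is that the paper proves (d) directly via $R^\eta=\Sg(S^\eta)=\Sg(S)=R$ rather than by invoking (b), but this is an equivalent argument.
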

  \begin{proof}
    The first point follows in a straightforward way from the definition of a
    compatible relation.
    To prove the second point, realize that the $\eta$-image of $R$ is generated
    by $S^\eta\subset R$.

    To see the third point, consider first the case when $S$ is a chipped cube
    or a blob. The membership of a tuple $\vect d\in A^n$ in $S$ only
    depends on the contents of all blocks of $\vect d$. But the content of the
    $i$-th block of $\vect d$ and $\vect d^\eta$ is the same for all $i$,
    therefore $S^\eta=S$. When $S$ is a sponge, $S$ is just a union of blobs of the
    same type -- we can use the above argument for each blob separately and
    obtain $S^\eta=S$, too.
    
    For the last point, we observe that $R^\eta=\Sg(S^\eta)$ because operations
    commute with $\eta$-images, and that $S^\eta=S$ by the previous point.
    Together, we get $R^\eta=\Sg(S^\eta)=\Sg(S)=R$.
  \end{proof}
  
  The following observation follows directly from the definition of a blob:
    \begin{observation}\label{obsMiniChanges}
      Let $\Gamma$ be a blob and $\vect r\in A^n$. Let $\eta\colon
    [n]\to [n]$ be a mapping that sends each block of $\Gamma$ to itself.
      Assume moreover that the $i$-th block of $\vect r^\eta$
    contains $C_i$ for each $i=1,\dots, k$ (where $k$ is the number of blocks of $\Gamma$). 
    Then $\vect r\in \Gamma$.
  \end{observation}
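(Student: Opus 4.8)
The plan is simply to unwind the definition of the blob $\Gamma$. Writing $\Gamma$ for the blob with parameters $C_1,\dots,C_k$ and $D_1,\dots,D_k$ of type $(n_1,\dots,n_k)$, membership $\vect r\in\Gamma$ amounts to two conditions: first, that $\vect r\in D_1^{n_1}\times D_2^{n_2}\times\dots\times D_k^{n_k}$, i.e. $r_j\in D_i$ whenever $j\in B_i$; and second, that for each $i$ the content $\{r_j\colon j\in B_i\}$ of the $i$-th block of $\vect r$ contains $C_i$. The first condition comes with the data we are given (in the situations where the observation is invoked, $\vect r$ already lies in the ambient product), so the real work is to derive the second condition from the hypothesis on $\vect r^\eta$.

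First I would pin down the effect of $\eta$ sending each block to itself: for every $i$ we have $\eta(B_i)\subseteq B_i$, so the $i$-th block of $\vect r^\eta$ has content $\{(\vect r^\eta)_j\colon j\in B_i\}=\{r_{\eta(j)}\colon j\in B_i\}$, and this is a subset of $\{r_m\colon m\in B_i\}$, the content of the $i$-th block of $\vect r$. In other words, passing from $\vect r$ to $\vect r^\eta$ can only shrink (or preserve) the content of each block. The hypothesis now gives $C_i\subseteq\{(\vect r^\eta)_j\colon j\in B_i\}$, and chaining the inclusion just obtained yields $C_i\subseteq\{r_m\colon m\in B_i\}$. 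This is exactly the second membership condition, so $\vect r\in\Gamma$.

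There is essentially no obstacle here: the statement is, as advertised, a direct consequence of the definition of a blob. The one point deserving a second's attention is getting the inclusion between the two block-contents the right way around — because $\eta$ need not be onto $B_i$ (it is not assumed to be a permutation, unlike in Observation~\ref{obsEtaInvariant}(c)), the content of the $i$-th block of $\vect r^\eta$ may be strictly smaller than that of $\vect r$, which is precisely the direction needed to lift the lower bound ``$\supseteq C_i$'' from $\vect r^\eta$ back up to $\vect r$.
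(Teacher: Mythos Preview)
Your argument is correct and matches the paper's approach exactly: the paper gives no proof, merely stating that the observation ``follows directly from the definition of a blob,'' and you have unwound that definition in the obvious way. Your remark that the product condition $\vect r\in D_1^{n_1}\times\dots\times D_k^{n_k}$ is not literally ensured by the stated hypotheses (but is clearly intended and present whenever the observation would be applied) is a fair reading; the paper is slightly loose here, and you were right to flag it rather than paper over it.
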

  \begin{observation}\label{obsEtaChi}
    Let $\vect a,\vect b\in A^n$ be two tuples, $I\subset [n]$ and $\eta\colon
    [n]\to [n]$ a mapping. Then  $\left(\chi_I(\vect a,\vect b)\right)^\eta=\chi_{\eta^{-1}(I)}(\vect
      a^\eta,\vect b^\eta),$
    where $\eta^{-1}(I)=\{j\in [n]\colon \eta(j)\in I\}$.
  \end{observation}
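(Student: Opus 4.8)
The plan is to prove this observation by a direct coordinate-by-coordinate comparison of the two $n$-tuples, simply unwinding the definition of the $\eta$-image and of $\chi$. No structural input is needed; $\eta$ is an arbitrary map, so in particular we will not (and cannot) use injectivity or surjectivity of $\eta$.

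Concretely, I would fix an index $j\in[n]$ and compute the $j$-th entry of each side. On the left, by Definition~\ref{def:shuffle} the $j$-th entry of $\left(\chi_I(\vect a,\vect b)\right)^\eta$ is the $\eta(j)$-th entry of $\chi_I(\vect a,\vect b)$, which by the definition of $\chi_I$ equals $b_{\eta(j)}$ if $\eta(j)\in I$ and $a_{\eta(j)}$ if $\eta(j)\notin I$. On the right, the $j$-th entry of $\chi_{\eta^{-1}(I)}(\vect a^\eta,\vect b^\eta)$ is the $j$-th entry of $\vect b^\eta$, namely $b_{\eta(j)}$, when $j\in\eta^{-1}(I)$, and the $j$-th entry of $\vect a^\eta$, namely $a_{\eta(j)}$, when $j\notin\eta^{-1}(I)$. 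By the definition $\eta^{-1}(I)=\{j\in[n]\colon \eta(j)\in I\}$, the conditions ``$j\in\eta^{-1}(I)$'' and ``$\eta(j)\in I$'' are literally the same, so the two entries coincide for every $j$, which is the claim.

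There is essentially no obstacle here; this is pure bookkeeping. The one point that warrants a moment's care is keeping straight the two roles of the index: $j$ indexes the entry of the \emph{output} tuple, while $\eta(j)$ is the position in the \emph{input} tuple from which that entry is read, and correspondingly the ``twist'' on the set $I$ is by $\eta^{-1}$ rather than by $\eta$.
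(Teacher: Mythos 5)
Your proof is correct and takes essentially the same approach as the paper: fix an index, unwind the definitions of $(-)^\eta$ and $\chi$, and observe that both sides have $j$-th entry $b_{\eta(j)}$ when $\eta(j)\in I$ and $a_{\eta(j)}$ otherwise.
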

  \begin{proof}
    Take $i\in [n]$. It follows from the definition of $\chi_I$ that the $i$-th entry of both 
$\left(\chi_I(\vect a,\vect b)\right)^\eta$ and $\chi_{\eta^{-1}(I)}(\vect
      a^\eta,\vect b^\eta)$ is equal to $b_{\eta(i)}$ if $\eta(i)\in I$ and
      $a_{\eta(i)}$ otherwise.
  \end{proof}
  It turns out that small elusive relations compatible with the algebra $\algA$ are
  $\eta$-invariant exactly when $\eta$ is a permutation:
  \begin{observation}\label{obsBlocksPerm}
    Let $R=\Sg(\{\chi_I(\vect a,\vect b)\colon I\neq
    \emptyset\})$ be a relation such that
    $\vect a\not\in R$, where
   \begin{align*}
    \vect a&=a_1^{\bf n_1}a_2^{\bf n_2}\dots a_k^{\bf n_k}\\
    \vect b&=b_1^{\bf n_1}b_2^{\bf n_2}\dots b_k^{\bf n_k}.
  \end{align*}
    Let $\eta\colon [n]\to[n]$ preserve the
    blocks of $R$ (given by $n_1,n_2,\dots,n_k$). Then $R^\eta\subset R$ if and
    only if $\eta$ is a permutation.
  \end{observation}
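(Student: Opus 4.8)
The plan is to push everything through the index sets $I$ and reduce to two elementary facts: (i) because $\eta$ maps each block into itself and both $\vect a$ and $\vect b$ are constant on every block, we have $\vect a^\eta=\vect a$ and $\vect b^\eta=\vect b$; and (ii) by Observation~\ref{obsEtaChi}, applying $\eta$ to a generating column $\chi_I(\vect a,\vect b)$ merely replaces $I$ by $\eta^{-1}(I)=\{j\in[n]:\eta(j)\in I\}$. Combining (i) and (ii), for every $\emptyset\neq I\subset[n]$ we get
\[
  \bigl(\chi_I(\vect a,\vect b)\bigr)^\eta=\chi_{\eta^{-1}(I)}(\vect a^\eta,\vect b^\eta)=\chi_{\eta^{-1}(I)}(\vect a,\vect b).
\]
So the $\eta$-image of a generator of $R$ is itself of the form $\chi_J(\vect a,\vect b)$, and whether it is again a generator (equivalently, whether it lies in $R$) is governed entirely by whether $J=\eta^{-1}(I)$ is nonempty.

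For the direction ``$\eta$ a permutation $\Rightarrow R^\eta\subset R$'': if $\eta$ is a permutation then $\eta^{-1}(I)\neq\emptyset$ whenever $I\neq\emptyset$, so by the displayed identity each $\bigl(\chi_I(\vect a,\vect b)\bigr)^\eta$ is again one of the generators of $R$ and hence lies in $R$. By Observation~\ref{obsEtaInvariant}(b) this is exactly what it means for $R$ to be $\eta$-invariant, i.e.\ $R^\eta\subset R$.

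For the converse I would argue contrapositively. If $\eta$ is not a permutation of the finite set $[n]$, then it is not surjective, so $I:=[n]\setminus\Im(\eta)$ is a nonempty subset of $[n]$, whence $\chi_I(\vect a,\vect b)\in R$. But no coordinate is sent into $I$, so $\eta^{-1}(I)=\emptyset$, and the displayed identity gives $\bigl(\chi_I(\vect a,\vect b)\bigr)^\eta=\chi_\emptyset(\vect a,\vect b)=\vect a$. Since $\vect a\notin R$ by hypothesis, $R^\eta$ contains a tuple outside $R$, so $R^\eta\not\subset R$. (If $n\le 1$ there is nothing to prove, since the only self-map of $[1]$ is the identity.)

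There is no serious obstacle here; the only point that needs attention is fact (i) — that the block-preservation hypothesis really does fix the two witnessing tuples — since this is what makes the whole reduction to index sets go through, and it is exactly where the constant-on-blocks shape $a_1^{\bf n_1}\cdots a_k^{\bf n_k}$, $b_1^{\bf n_1}\cdots b_k^{\bf n_k}$ of $\vect a,\vect b$ enters. The mild ``trick'' for the converse is spotting that for a non-permutation the complement of the image yields a nonempty index set $I$ whose preimage under $\eta$ is empty, so that the honest generator $\chi_I(\vect a,\vect b)$ is carried by $\eta$ precisely onto the forbidden tuple $\vect a$.
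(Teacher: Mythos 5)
Your proof is correct and follows essentially the same route as the paper's: both directions rest on the observation that block-preservation forces $\vect a^\eta=\vect a$ and $\vect b^\eta=\vect b$, combined with Observations~\ref{obsEtaChi} and~\ref{obsEtaInvariant}, and the converse is obtained by picking a nonempty set of indices outside $\operatorname{Im}(\eta)$ whose $\chi$-tuple is carried to the forbidden $\vect a$. The only cosmetic difference is that you take $I=[n]\setminus\operatorname{Im}(\eta)$ in its entirety, whereas the paper picks a single missing coordinate (WLOG $I=\{1\}$); both choices work equally well.
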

  \begin{proof}
    Note that since $\eta$ preserves the blocks of $R$, we have $\vect
    a^\eta=\vect a$ and $\vect b^\eta=\vect b$.

    If $\eta$ is a permutation, then it sends the set of generators of $R$ to
    itself. Observation~\ref{obsEtaInvariant} then gives us $R^\eta\subset R$. On the
    other hand, assume that $\eta$ is not a permutation. Then $\eta$ is not
    onto; without loss of generality assume that 1 does not lie in the image of
    $\eta$. This gives us a contradiction, though: 
    \[
      \left(\chi_{\{1\}}(\vect a,\vect
    b)\right)^\eta=\chi_{\emptyset}(\vect a^\eta, \vect b^\eta)=\vect
    a^\eta=\vect a\not\in R,
    \]
  where the first equality follows from Observation~\ref{obsEtaChi} and
    $\eta^{-1}(1)=\emptyset$. We see that $\eta$ sends the generator $\chi_{\{1\}}(\vect a,\vect
    b)$ of $R$ outside of $R$, a failure of $\eta$-invariance of $R$.
    
  \end{proof}

  \begin{lemma}\label{lemContainsBlob}
    Let $\algA$ be a finite algebra, and let $R$ be inclusion minimal among all elusive
    relations compatible with $\algA$. Assume that $R=\Sg(\{\chi_I(\vect a,\vect
    b)\colon I\neq \emptyset\})$ and $\vect a\not\in R$ for some tuples $\vect a,\vect b$
    of type $(n_1,n_2,\dots,n_k)$. Let $\vect r=w(M)$ where $w$ is a
    member of the clone of operations of $\algA$ and $M$ is a matrix
    whose columns lie in $\{\chi_I(\vect a,\vect
    b)\colon I\neq \emptyset\}$. Then $R$ contains the blob    
    \[
      \Gamma= \left\llbracket\begin{matrix}
	C_1&|&D_1^{n_1}\\
	C_2&|&D_2^{n_2}\\
	   &\vdots&\\
	C_k&|&D_k^{n_k}
  \end{matrix}\right\rrbracket
    \]
    where $D_i=w(\{a_i,b_i\},\dots,\{a_i,b_i\})$ and $C_i=\{r_j\colon j\in B_i\}$.
  \end{lemma}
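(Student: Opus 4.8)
Write $m$ for the arity of $w$, and for a tuple $\vect u\in A^n$ and a block $B_i$ write $\cont_i(\vect u)=\{u_j\colon j\in B_i\}$ for the content of the $i$-th block of $\vect u$. The plan is to prove $\Gamma\subset R$ by realising every tuple of $\Gamma$ as $w$ applied to a matrix whose columns all lie in $R$. First observe that $\vect r=w(M)$ lies in $R$: the columns of $M$ are among the generators $\chi_I(\vect a,\vect b)$, $I\neq\emptyset$, of $R$, and $R$ is compatible with $\algA$. Moreover $\vect r\in\Gamma$, since for $j\in B_i$ the $j$-th row of $M$ is a tuple over $\{a_i,b_i\}$, hence $r_j\in w(\{a_i,b_i\}^m)=D_i$, while $\cont_i(\vect r)=C_i$ by the definition of $C_i$. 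In particular $C_i\subset D_i$ for each $i$, so the blob $\Gamma$ is well defined.

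Now fix $\vect v\in\Gamma$. For $j\in B_i$ we have $v_j\in D_i=w(\{a_i,b_i\},\dots,\{a_i,b_i\})$, so choose a preimage $\vect\sigma_j\in\{a_i,b_i\}^m$ with $w(\vect\sigma_j)=v_j$; stacking the $\vect\sigma_j$ as rows gives an $n\times m$ matrix $N$ with $w(N)=\vect v$. Since within every block each column of $N$ takes values in $\{a_i,b_i\}$, each column equals $\chi_J(\vect a,\vect b)$ for some $J\subset[n]$, and it lies in $R$ precisely when $J\neq\emptyset$, that is, precisely when that column is not the all-$\vect a$ tuple. Thus the task reduces to choosing the preimages $\vect\sigma_j$ so that no column of $N$ is constantly ``$a$''. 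Here the hypothesis $C_i\subset\cont_i(\vect v)$ together with $M$ is used: for a column index $\ell$, the $\ell$-th column of $M$ is a generator $\chi_{I_\ell}(\vect a,\vect b)$ with $I_\ell\neq\emptyset$, so pick $q_\ell\in I_\ell$ and let $i$ be the block of $q_\ell$; then $r_{q_\ell}\in C_i\subset\cont_i(\vect v)$, so $v_p=r_{q_\ell}$ for some $p\in B_i$, and the $q_\ell$-th row of $M$ is a $\{a_i,b_i\}$-valued preimage of $v_p$ with a ``$b$'' in coordinate $\ell$; taking it as $\vect\sigma_p$ makes column $\ell$ a genuine generator.

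The delicate part, which I expect to be the main obstacle, is to perform this choice for all columns $\ell$ simultaneously: distinct columns may demand conflicting rows of $M$ at the same coordinate of $\vect v$ (this occurs exactly when several rows of $M$ that are needed for the covering carry the same value $c$ inside one block $B_i$ while $\vect v$ realises $c$ there only once). I would settle this using the minimality of $R$ and the $\eta$-image machinery: $R$ is invariant under block-preserving permutations (Observation~\ref{obsBlocksPerm}), which lets one slide a prescribed value to any position within its block, and an irreparable conflict ought to yield a strictly smaller relation that is still compatible with $\algA$ and still elusive -- obtained by intersecting $R$ with a product of appropriate subuniverses, or by passing to an $\eta$-image for a non-injective block-preserving $\eta$ -- contradicting the minimality of $R$, in the style of the intersection argument in the proof of Lemma~\ref{lemCubeNotes}. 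Once the $\vect\sigma_j$ are chosen so that no column of $N$ is the all-$\vect a$ tuple, every column of $N$ is a generator, so $w(N)=\vect v$ lies in $R$; since $\vect v\in\Gamma$ was arbitrary, $\Gamma\subset R$.
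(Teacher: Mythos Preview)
Your approach of realising each $\vect v\in\Gamma$ directly as $w(N)$ with columns of $N$ among the generators is natural, but as you yourself flag, the simultaneous choice of the rows $\vect\sigma_j$ is a genuine obstacle, and your proposed resolution is only a sketch. The difficulty is real: there is no evident way to turn an ``irreparable conflict'' into a strictly smaller elusive compatible relation, and the intersection arguments of Lemma~\ref{lemCubeNotes} do not transfer here (that lemma exploits an explicit chipped-cube description, which is not available at this point of the argument). So as written, the proof has a gap at exactly the crucial step.

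The paper avoids the packing problem entirely. Rather than constructing $N$, it argues by contradiction: assume $\vect d\in\Gamma\setminus R$. Since the content of each block $B_i$ of $\vect d$ contains $C_i$, which is the content of the same block of $\vect r$, there is a block-preserving map $\eta$ with $\vect d^\eta=\vect r$; were $\eta$ a permutation we would get $\vect d\in R$, so $\eta$ is not onto, and by Observation~\ref{obsBlocksPerm} $R$ is not $\eta$-invariant. The contradiction comes from showing that $R$ \emph{is} $\eta$-invariant. For this one first observes (by overwriting rows of a matrix producing $\vect d$ with $b_i$'s) that $\chi_I(\vect d,\vect c)\in R$ for every nonempty $I$, where $\vect c=c_1^{\bf n_1}\cdots c_k^{\bf n_k}$ with $c_i=w(b_i,\dots,b_i)$. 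Thus $\vect d$ is itself an elusive tuple for $R$, and by minimality $R=\Sg(\{\chi_I(\vect d,\vect c)\colon I\neq\emptyset\})$. Now $\chi_I(\vect d,\vect c)^\eta=\chi_{\eta^{-1}(I)}(\vect r,\vect c)$, and every such tuple lies in $R$ because $\vect r\in R$ and overwriting any set of coordinates of $\vect r$ by the corresponding entries of $\vect c$ stays inside $R$ (the same row-overwriting trick, now applied to $M$). Hence $R^\eta\subset R$, the desired contradiction. The key point is that minimality is used not to resolve a combinatorial obstruction in building $N$, but to replace the generating set by $\{\chi_I(\vect d,\vect c)\}$, whose $\eta$-images are easy to place in $R$.
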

  \begin{proof}
    Assume that $\Gamma\setminus R$ is nonempty. We shall show how this yields
    a contradiction. 
    Choose a $\vect d\in \Gamma\setminus R$ and pick an $\eta\colon [n]\to[n]$ that preserves the
    blocks $B_1,\dots,B_k$ and sends $\vect d$ to $\vect r$.  (To construct
    such an $\eta$, let $\eta(j)$ for $j\in B_i$ be any $q\in B_i$ such that
    $d_q=r_j$.) Were $\eta$ a permutation, we would have $\vect
    r^{\eta^{-1}}=\vect d$. But $\eta^{-1}$ would then be a permutation that
    preserves blocks of coordinates, so $\vect d\in R^{\eta^{-1}}\subset R$, a
    contradiction with our choice of $\vect d$. We conclude that
    $\eta$ is not a permutation. By Observation~\ref{obsBlocksPerm}, $R$ is
    not $\eta$-invariant.

    In the rest of the proof, we shall show that $R^\eta\subset R$, which will be a contradiction.

  Let  $\vect c=c_1^{\bf n_1}c_2^{\bf n_2}\dots c_k^{\bf
    n_k}$ where $c_i=w(b_i,\dots,b_i)$. We claim that for any
    nonempty $I\subset [n]$ we have $\chi_I(\vect d,\vect c)\in R$. We know
    that $\vect d=w(N)$ for a suitable matrix $N$ whose first $n_1$ rows contain only
    members of $\{a_1,b_1\}$, the next $n_2$ rows contain only members of
    $\{a_2,b_2\}$, etc. Now rewrite all rows of $N$ whose indices lie in $I$ to
    $b_j$'s -- call the new matrix $N'$. Since $I\neq\emptyset$, each column of
    $N'$ will contain a $b_j$, so columns of $N'$ lie in $R$.  It now remains
    to observe that $w(N')=\chi_I(\vect d,\vect c)\in R$. 
    
    Given that $\vect d\not\in R$, but $\chi_I(\vect d,\vect c)\in R$ for any $I\neq\emptyset$, the
    tuple $\vect d$ is elusive for $R$. What is more, from the minimality of $R$ we get that $R=\Sg(\{\chi_I(\vect d,\vect c)\colon I\neq \emptyset\})$. 
    
    To verify that $R^\eta\subset R$, it is therefore enough to
    show that $\chi_I(\vect d,\vect c)^\eta\in R$ for each $I$. 
    From Observation~\ref{obsEtaChi} and from $\vect d^\eta=\vect r\in R$ and
    $\vect c^\eta=\vect c$, we have 
    \[
      \chi_I(\vect d,\vect c)^\eta=\chi_{\eta^{-1}(I)}(\vect d^\eta,\vect
      c^\eta)=\chi_{\eta^{-1}(I)}(\vect r,\vect c).
      \]
    However, since $\vect r\in D_1^{n_1}\times\dots\times D_k^{n_k}$, we
    can rewrite any set of coordinates of $\vect r$ to coordinates of $\vect c$ and
    stay inside $R$ (this includes rewriting the empty set thanks to $\vect r\in R$). Therefore,
    $\chi_I(\vect  d,\vect c)^\eta=\chi_{\eta^{-1}(I)}(\vect r,\vect c)\in R$,
    making $R$ $\eta$-invariant, a contradiction.
  \end{proof}

  \begin{theorem}\label{thmSponge}
    Let $\algA$ be a finite algebra. Let $R$ be a relation that is inclusion minimal among $n$-ary elusive
    relations compatible with $\algA$. Let the pair of tuples $\vect a,\vect b$
    witness the elusiveness of $R$. 
    Then we can reorder the coordinates of
    $R$ to get a sponge of type $(n_1,\dots,n_k)$ where $k$ is the number of
    distinct pairs $(a_i,b_i)$ for $i=1,\dots,n$.
  \end{theorem}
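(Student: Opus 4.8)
The plan is to exhibit $R$ (after a suitable permutation of coordinates) as a union of blobs of a single type, by invoking Lemma~\ref{lemContainsBlob} once for every tuple of $R$. First I would record that, by minimality of $R$, we have $R=\Sg(\{\chi_I(\vect a,\vect b)\colon I\neq\emptyset\})$: the right-hand side is a compatible relation contained in $R$, and it is elusive, witnessed by the very same pair $\vect a,\vect b$ (we need $\vect a$ to lie outside it, which follows from $\vect a\notin R$); minimality of $R$ then forces equality. Next I would reorder the coordinates of $R$ so that coordinates carrying equal pairs $(a_i,b_i)$ become consecutive. By Observation~\ref{obsEtaInvariant}(a) this reordering keeps the relation compatible, it clearly keeps it elusive, and --- since permuting coordinates is a bijection on $n$-ary relations that respects inclusion --- it keeps it inclusion minimal among $n$-ary elusive compatible relations. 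So without loss of generality $\vect a=a_1^{\bf n_1}\cdots a_k^{\bf n_k}$, $\vect b=b_1^{\bf n_1}\cdots b_k^{\bf n_k}$ is of type $(n_1,\dots,n_k)$ with $(a_i,b_i)\neq(a_j,b_j)$ for $i\neq j$, and the block partition $B_1,\dots,B_k$ is now fixed.

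Now fix an arbitrary $\vect r\in R$ and write $\vect r=w(M)$ for some term operation $w$ of $\algA$ and some matrix $M$ all of whose columns are generators $\chi_I(\vect a,\vect b)$, $I\neq\emptyset$ (possible since $R$ is generated by these). Lemma~\ref{lemContainsBlob} applies --- its hypotheses ($R$ minimal, $R=\Sg(\{\chi_I(\vect a,\vect b)\colon I\neq\emptyset\})$, $\vect a\notin R$, and $\vect a,\vect b$ of the given type) are exactly what we have arranged --- and it yields a blob $\Gamma_{\vect r}\subseteq R$ of type $(n_1,\dots,n_k)$ whose $i$-th outer set is $D_i=w(\{a_i,b_i\},\dots,\{a_i,b_i\})$ and whose $i$-th inner set is $C_i=\{r_j\colon j\in B_i\}$. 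A short verification gives $\vect r\in\Gamma_{\vect r}$: for $j\in B_i$ the entry $r_j$ is $w$ applied to the $j$-th row of $M$, every entry of which is $a_j=a_i$ or $b_j=b_i$, so $r_j\in D_i$; and the content condition $C_i\subseteq\{r_j\colon j\in B_i\}$ holds trivially since the two sides coincide. Note also that the single operation $w$ simultaneously realizes all of $D_1,\dots,D_k$, so $\Gamma_{\vect r}$ is a legitimate blob in the sense of the definition.

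Collecting these blobs, $R=\bigcup_{\vect r\in R}\Gamma_{\vect r}$: the inclusion $\supseteq$ is $\Gamma_{\vect r}\subseteq R$, and $\subseteq$ follows from $\vect r\in\Gamma_{\vect r}$. Since every $\Gamma_{\vect r}$ has the same type $(n_1,\dots,n_k)$, this displays the reordered $R$ as a sponge of that type, which is what the theorem asserts. I do not expect a genuine obstacle in this argument: the real work is already packed into Lemma~\ref{lemContainsBlob} (and, through it, into the $\eta$-invariance machinery of Observations~\ref{obsEtaInvariant}--\ref{obsBlocksPerm}). The only points needing care are confirming that the preliminary reordering preserves minimality and that each $\vect r$ really belongs to the blob Lemma~\ref{lemContainsBlob} attaches to it --- both routine once the block partition has been pinned down in advance.
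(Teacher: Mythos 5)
Your proposal is correct and follows essentially the same route as the paper: after grouping coordinates with equal pairs $(a_i,b_i)$ into consecutive blocks, both you and the paper apply Lemma~\ref{lemContainsBlob} to each $\vect r\in R$ (via a term $w$ with $\vect r=w(M)$ for a matrix $M$ of generators) to obtain a blob $\Gamma_{\vect r}$ with $\vect r\in\Gamma_{\vect r}\subseteq R$, and then take the union. You spell out a few checks the paper leaves implicit (that the preliminary reordering preserves minimality, and that $\vect r$ actually lies in the blob produced), but there is no substantive difference in the argument.
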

  \begin{proof}
    Let us group the same pairs of entries of $\vect a, \vect b$ together. That
    is, we permute the coordinates 1, \dots, $n$ so that 
    \begin{align*}
\vect a&=a_1^{\bf n_1}a_2^{\bf n_2}\dots a_k^{\bf n_k}\\
    \vect b&=b_1^{\bf n_1}b_2^{\bf n_2}\dots b_k^{\bf n_k},
  \end{align*}
   where $(a_i,b_i)\neq (a_j,b_j)$ for $i\neq j$. Observe that $k$ is then equal
    to the number of distinct pairs $(a_i,b_i)$, as required.
    Let us denote by $E$ the union of all blobs of
    type $(n_1,\dots,n_k)$ that are contained in $R$. Our goal is to show that
    $E=R$.

    Obviously, $E\subset R$, so all we need to show is that whenever $\vect r\in R$, 
    then $\vect r\in \Gamma$ for some blob $\Gamma\subset R$. Consider any
    $\vect r \in R$. By the minimality of $R$, there is a term operation
    $w$ of $\algA$ such that $\vect r=w(\chi_I(\vect a,\vect b)\colon
    I\neq\emptyset)$. Applying Lemma~\ref{lemContainsBlob}, we get a blob $\Gamma$
    such that $\vect r\in \Gamma\subset R$ -- exactly what we need to show that
    $R=E$.
   \end{proof}

  \begin{lemma}\label{lemUpperBound}
    Let $R$ be a relation inclusion minimal among all elusive relations compatible with $\algA$. Assume moreover that $R$ is also a sponge of
    type $(n_1,n_2,\dots,n_k)$. Let $m$ be the maximum arity of a basic
    operation of $\algA$. If $\algA$ has a cube term, then for each $i$ we
    have $n_i<|A|m$.
  \end{lemma}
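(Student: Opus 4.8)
The plan is to argue by contradiction: assuming that some block has $n_i\geq |A|m$, I manufacture an elusive relation of arity $n+1$ compatible with $\algA$, which is impossible. We may assume $n\geq 2$, since for $n\leq 1$ every block has $n_i\leq 1<|A|m$ (and $|A|\leq 1$ is vacuous). To see why an $(n+1)$-ary compatible elusive relation cannot exist: since $R$ is $n$-ary, elusive and compatible, applying a cube term of $\algA$ to the matrix $(\chi_I(\vect a,\vect b)\colon I\neq\emptyset)$ would output $\vect a\in R$, so $\algA$ has no cube term of dimension $\leq n$; as $\algA$ does have a cube term and $R$ was chosen minimal, Lemma~\ref{lemCCbeNonIdempotent} gives that the least dimension of a cube term of $\algA$ is exactly $n+1$, and hence no relation compatible with $\algA$ of arity exceeding $n$ can be elusive.

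Now suppose $n_i\geq |A|m$. Let $\tilde R\subset A^{n+1}$ be the sponge obtained from $R$ by enlarging block $B_i$ by one coordinate: its blocks are $B_1,\dots,B_{i-1},B_i',B_{i+1},\dots,B_k$ with $|B_i'|=n_i+1$, and $\vect w\in\tilde R$ iff some $\vect v\in R$ has $\cont_j(\vect v)=\cont_j(\vect w)$ for every $j$ (this is well defined because every block content of a tuple of $A^{n+1}$ has at most $|A|\leq n_i$ elements). Since $R$ is a sponge, membership in $R$ depends only on block contents, so this rule recovers $R$ itself, and $\tilde R$ is elusive, witnessed by $\vect a',\vect b'$ obtained from $\vect a,\vect b$ by repeating the $i$-th block entry once more: for nonempty $I\subset[n+1]$ one finds a nonempty $I''\subset[n]$ with $\cont_j(\chi_{I''}(\vect a,\vect b))=\cont_j(\chi_I(\vect a',\vect b'))$ for all $j$ (possible because $n_i\geq 2$), so $\chi_I(\vect a',\vect b')\in\tilde R$; whereas if $\vect a'\in\tilde R$ then some $\vect v\in R$ has all block contents $\{a_j\}$, forcing $\vect v=\vect a$, contradicting $\vect a\notin R$.

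The crux is that $\tilde R$ is compatible with $\algA$. Fix an $\ell$-ary basic operation $f$ ($\ell\leq m$) and tuples $\vect w^{(1)},\dots,\vect w^{(\ell)}\in\tilde R$. Call a position $p\in B_i'$ \emph{bad for $t$} if $w^{(t)}_p$ occurs only once in the $i$-th block of $\vect w^{(t)}$; column $t$ has at most $|\cont_i(\vect w^{(t)})|\leq|A|$ bad positions, so fewer than $\ell|A|+1\leq m|A|+1\leq n_i+1=|B_i'|$ positions are bad for some column — this is exactly where $n_i\geq|A|m$ enters. Pick $p^*\in B_i'$ good for all columns; then deleting coordinate $p^*$ from each $\vect w^{(t)}$ changes no block content, so the shortened tuples $\vect v^{(t)}:=\vect w^{(t)}$ with $p^*$ removed lie in $R$, and $\vect v':=f(\vect v^{(1)},\dots,\vect v^{(\ell)})\in R$ agrees on all block contents with $\vect r:=f(\vect w^{(1)},\dots,\vect w^{(\ell)})$ except that $\cont_i(\vect v')\subset\cont_i(\vect r)$. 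The remaining step — and the main obstacle — is to enlarge $\cont_i(\vect v')$ back up to $\cont_i(\vect r)$ inside $R$: writing each $\vect v^{(t)}$ as a term in columns of $\{\chi_I(\vect a,\vect b)\colon I\neq\emptyset\}$, the tuple $\vect v'$ is likewise $W$ applied to such columns, so by Lemma~\ref{lemContainsBlob} $R$ contains a blob through $\vect v'$ whose $i$-th ``$D$''-set is $W(\{a_i,b_i\},\dots,\{a_i,b_i\})$, and a short computation shows this set already contains $\cont_i(\vect r)=\{f(w^{(1)}_p,\dots,w^{(\ell)}_p)\colon p\in B_i'\}$. Replacing the $i$-th block of $\vect v'$ by an arrangement of $\cont_i(\vect r)$ then stays in that blob, hence in $R$, and has the same block contents as $\vect r$, so $\vect r\in\tilde R$. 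Thus $\tilde R$ is a compatible elusive relation of arity $n+1>n$, the desired contradiction, and therefore $n_i<|A|m$.
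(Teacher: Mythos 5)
Your central construction and the verification that $\tilde R$ is compatible and elusive track the paper's proof quite closely: the block-content counting argument that uses $n_i \geq |A|m$ to find a deletable coordinate, the appeal to Lemma~\ref{lemContainsBlob} via the composite term $W$ to recover the dropped block content, and the check that the content of the $i$-th block of $\vect r$ lands inside $W(\{a_i,b_i\},\dots,\{a_i,b_i\})$ are all correct and are exactly the ideas the paper uses.

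The gap is in the step that converts the existence of $\tilde R$ into a contradiction. You assert that ``the least dimension of a cube term of $\algA$ is exactly $n+1$, and hence no relation compatible with $\algA$ of arity exceeding $n$ can be elusive,'' citing Lemma~\ref{lemCCbeNonIdempotent} and the minimality of $R$. Neither gives this. Lemma~\ref{lemCCbeNonIdempotent} runs the other way: from a jump in cube-term dimensions it \emph{produces} an elusive relation; it does not say that a minimal $n$-ary elusive relation pins the least cube dimension down to $n+1$. And inclusion minimality of $R$ constrains only subsets of $R$, which all have arity $n$; it says nothing about elusive relations of other arities. So the least cube-term dimension $d^*$ may well satisfy $d^* \geq n+2$, in which case a compatible elusive relation of arity $n+1$ is entirely consistent with $\algA$ having a cube term, and your $\tilde R$ is no contradiction at all.

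The paper closes this by showing that the extended relation $R^{\star s}$, obtained by enlarging the over-large block by $s$ coordinates, is compatible and elusive for \emph{every} $s\geq 1$; arbitrarily large compatible elusive relations do contradict the existence of any cube term. Note that your argument does not simply iterate to give this: Lemma~\ref{lemContainsBlob} is invoked for the minimal $R$, and $\tilde R$ need not be minimal; moreover, the ``good position'' selection needs care when deleting $s>1$ positions at once, since two positions each individually good can jointly delete every occurrence of a value (the paper sidesteps this by reordering the enlarged block so its full content is witnessed by the final $n_i$ coordinates and then truncating the first $s$). If you rework the final step so that $R^{\star s}$ is shown compatible and elusive for all $s$, and then conclude from Lemma~\ref{lemMatrix} that $\algA$ has no cube term, the proof goes through.
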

  \begin{proof}
    By the minimality of $R$ as an elusive compatible relation, we know that there are two
    tuples $\vect a=a_1^{\bf n_1}a_2^{\bf n_2}\dots a_k^{\bf n_k}$ and $\vect
    b=b_1^{\bf n_1}b_1^{\bf n_2}\dots b_k^{\bf n_k}$ so that $\vect a\not \in R$ and $R=\Sg(\{\chi_I(\vect a,\vect
  b)\colon I\neq \emptyset\})$.
    
    For each $\vect r\in R$, we consider all the blobs of type
    $(n_1,\dots,n_k)$ that the conclusion of
    Lemma~\ref{lemContainsBlob} places inside $R$ (cf. proof of
    Theorem~\ref{thmSponge}). Such blobs cover $R$, so we
    have (for a suitable set
    $L$, and appropriate $C_{i,\ell}$'s and $D_{i,\ell}$'s):
    \[
      R=\bigcup_{\ell\in L}
      \left\llbracket\begin{matrix}
	C_{1,\ell}&|&D_{1,\ell}^{n_1}\\
	C_{2,\ell}&|&D_{2,\ell}^{n_2}\\
	   &\vdots&\\
	C_{k,\ell}&|&D_{k,\ell}^{n_k}
  \end{matrix}\right\rrbracket
    \]

  Assume for a contradiction (and without loss of generality) that $n_1\geq m|A|$. We shall show that
    $\algA$ does not have a cube term. For 
    $s\in\en$ we define the relation $R^{\star s}$ as basically ``$R$ whose
    first block is extended by $s$ entries'':
    \[
      R^{\star s}=\bigcup_{\ell\in L}
\left\llbracket\begin{matrix}
  C_{1,\ell}&|&D_{1,\ell}^{n_1+s}\\
	C_{2,\ell}&|&D_{2,\ell}^{n_2}\\
	   &\vdots&\\
	C_{k,\ell}&|&D_{k,\ell}^{n_k}
  \end{matrix}\right\rrbracket.
    \]
  Let $B^{\star s}_1,B^{\star s}_2,\dots$ be the blocks of $R^{\star s}$; we denote
    the tuples
    $a_1^{\bf n_1+s}a_2^{\bf n_2}\dots$ and $b_1^{\bf n_1+s}b_2^{\bf n_2}\dots$ by $\vect
    a^{\star s}$ and $\vect b^{\star s}$, respectively.

    It is easy to see that $R^{\star s}$ contains $\chi_I(\vect a^{\star s},\vect
    b^{\star s})$ for each nonempty $I\subset [n+s]$, but $\vect a^{\star
    s}\not \in R^{\star s}$, so $R^{\star s}$ is an elusive relation. If we can
    now show that each  $R^{\star s}$ is also compatible with $\algA$, we will have a family of arbitrarily large elusive relations compatible with $\algA$. Therefore, $\algA$ will have no cube term by Lemma~\ref{lemMatrix}.

    Take any basic operation $t$ of $\algA$ of arity $r\leq m$ and let $\vect
    {c_1},\dots, \vect {c_r}\in R^{\star s}$. We want to show that then also $t(\vect
    {c_1},\vect {c_2},\dots,\vect {c_r})\in R^{\star s}$. To simplify notation, we will
    assume that each $\vect {c_i}$ belongs to the $i$-th blob from $L$ (this is
    without loss of generality, as we can reorder blobs and even take several
    copies of the same blob without changing $R^{\star s}$). 
  
    Let us now examine the content of the first block of entries of $\vect
    {c_1},\dots,\vect {c_r}$. Let  $\vect {d_i}=(\vect {c_i})_{[1,n_1+s]}$ for
    $i=1,\dots,r$. Then $\cont(\vect {d_i})$ has size at most $|A|$ for each
    $i=1,\dots,r$ and thus
    there are at most $r|A|$ indices in $B_1^{\star s}$ that witness the content of
    \emph{all} $\vect {d_1},\dots,\vect {d_r}$. Since $n_1\geq r|A|$ and $R^{\star s}$ is invariant under
    permuting the first block, we can assume that the complete content of all $\vect
    {d_i}$'s appears in the last $n_1$ entries of $B_1^{\star s}$. That is, for each
    $i=1,\dots, r$ we have
    \[
     \cont((\vect {c_i})_{[1,s+n_1]})= \cont((\vect {c_i})_{[s+1,s+n_1]}).
    \]
    
  Let now $\vect{e_i}=(\vect {c_i})_{[s+1,n]}$, i.e. $\vect{e_i}$ is obtained from
    $\vect{c_i}$
    by cutting away the first $s$ entries. Since the contents of the first
    blocks remain the same, we see
    \[
      \vect{e_i}  \in
\left\llbracket\begin{matrix}
  C_{1,i}&|&D_{1,i}^{n_1}\\
	C_{2,i}&|&D_{2,i}^{n_2}\\
	   &\vdots&\\
	C_{k,i}&|&D_{k,i}^{n_k}
  \end{matrix}\right\rrbracket,
  \]
so $\vect{e_i}\in R$. By the definition of $R$, for each $i$ there exists a
$(2^n-1)$-ary operation $u_i$ in the
    clone of $\algA$ such that 
    $\vect{e_i}=u_i(\chi_I(\vect a,\vect b)\colon I\neq \emptyset)$.
    
    Therefore, $t(\vect{e_1},\dots,\vect{e_r})\in R$. Denote by $w$ the
    $r(2^n-1)$-ary term $t\circ (u_1,u_2,\dots,u_r)$ (where each $u_i$ has its own distinct variable set).
    Let $M$ be the $n\times r(2^n-1)$ matrix that consists of $r$ copies of 
    $(\chi_I(\vect a,\vect b)\colon I\neq \emptyset)$ arranged next to each
    other. It is easy to verify that $w(M)=t(\vect {e_1},\dots,\vect {e_k})$.
    Using
    Lemma~\ref{lemContainsBlob} with the term $w$ and tuple $w(M)$, we obtain 
    \[
      t(\vect{e_1},\dots,\vect{e_r})\in\left\llbracket\begin{matrix}
	C_1&|&D_1^{n_1}\\
	C_2&|&D_2^{n_2}\\
	   &\vdots&\\
	C_k&|&D_k^{n_k}
  \end{matrix}\right\rrbracket
      \subset R,
    \]
  where $C_i$ is the content of the $i$-th block of $w(M)$ and we let
    $D_i$ to be equal to $w(\{a_i,b_i\},\dots,\{a_i,b_i\})$. Since the tuple $t(\vect
    {e_1},\dots,\vect{e_r})$ is a suffix of $t(\vect
    {c_1},\dots,\vect{c_r})$, we only need to verify that
$t((\vect{c_1})_j,\dots,(\vect{c_r})_j)\in D_1$
    for each $j=1,\dots,s$ to obtain
    \[
      t(\vect{c_1},\dots,\vect{c_r})\in\left\llbracket\begin{matrix}
	C_1&|&D_1^{n_1+s}\\
	C_2&|&D_2^{n_2}\\
	   &\vdots&\\
	C_k&|&D_k^{n_k}
      \end{matrix}\right\rrbracket \subseteq R^{\star s},
    \]
  finishing the proof.

  Pick a $j\in \{1,\dots,s\}$.
  We know that $(\vect {c_1})_j\in \cont(\vect
    e_1)_{[1,n_1]}$ and so 
    \[
      (\vect {c_1})_j\in u_1(\{a_1,b_1\},\dots,\{a_1,b_1\}).
      \]
      We can do the same thing
    with $(\vect {c_2})_j$, $(\vect {c_3})_j$ and so on, getting
    \begin{align*}
      t((\vect{c_1})_j,\dots,(\vect{c_r})_j)&\in t(u_1(\{a_1,b_1\},\dots),\dots,u_r(\{a_1,b_1\},\dots))\\
      &\in w(\{a_1,b_1\},\{a_1,b_1\},\dots)=D_1,
    \end{align*}
  which is exactly what we needed.
  \end{proof}

  \begin{theorem}\label{thmGeneralCTB}
    Let $\algA$ be a finite algebra with the universe $\{1,2,\dots,|A|\}$. 
    Let $m$ be the maximal arity of a basic operation in $\algA$. Then $\algA$
    has a cube term if and only if $\algA$ has a cube term of dimension
    at most $|A|^3m$.
  \end{theorem}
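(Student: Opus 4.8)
The backward direction is trivial, so the whole content is to show that if $\algA$ has a cube term, then it has one of dimension at most $|A|^3m$. Let $N$ be the least dimension of a cube term of $\algA$. If $|A|=1$ then $\algA$ has a unary cube term and $N=1\le m=|A|^3m$, so assume $|A|\ge 2$; then $|A|^3m\ge 8$, and since a one‑element universe is the only way to have $N=1$ we have $N\ge 2$. If $N=2$ we are done because $2\le|A|^3m$, so assume $N\ge 3$ and put $n=N-1\ge 2$. By minimality of $N$, the algebra $\algA$ has a cube term of dimension $n+1$ but none of dimension $n$, so Lemma~\ref{lemCCbeNonIdempotent} yields an $n$-ary elusive relation compatible with $\algA$. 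This converse step is the crucial one: it turns "no cube term of dimension $n$'' into a concrete $n$-ary obstruction, which is what lets us bound $N$ from \emph{above}.

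Next, choose $R$ inclusion minimal among all $n$-ary elusive relations compatible with $\algA$, and let $\vect a,\vect b$ witness its elusiveness. By Theorem~\ref{thmSponge}, after permuting coordinates we may assume $R$ is a sponge of type $(n_1,\dots,n_k)$ with $n_1+\dots+n_k=n$, where $k$ is the number of distinct pairs $(a_i,b_i)$; since these pairs lie in $A\times A$ we get $k\le |A|^2$. Permuting coordinates is a bijection that preserves compatibility and inclusion by Observation~\ref{obsEtaInvariant}, so $R$ is still inclusion minimal among $n$-ary elusive compatible relations. Since $\algA$ has a cube term, Lemma~\ref{lemUpperBound} applies to this $R$ and gives $n_i<|A|m$ for every block. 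Combining the two bounds,
\[
  N-1=n=\sum_{i=1}^k n_i\le k\bigl(|A|m-1\bigr)<k\,|A|m\le |A|^2\cdot|A|m=|A|^3m,
\]
so $N-1<|A|^3m$, i.e.\ $N\le|A|^3m$, as claimed.

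I expect no serious obstacle here: all the heavy lifting is done by the earlier results (Lemma~\ref{lemCCbeNonIdempotent} for the obstruction, Theorem~\ref{thmSponge} for its shape as a sponge, Lemma~\ref{lemUpperBound} for the per‑block bound), and what remains is a short counting argument plus bookkeeping for the degenerate cases $|A|\le 1$ and $N\le 2$. The one point that deserves a line of care is matching the minimality hypothesis of Lemma~\ref{lemUpperBound} to the relation $R$ we actually produced — but the proof of that lemma uses only that $R$ equals $\Sg(\{\chi_I(\vect a,\vect b)\colon I\neq\emptyset\})$ and is a sponge, and both facts follow from minimality of $R$ within its own arity (together with Theorem~\ref{thmSponge}).
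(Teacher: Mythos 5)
Your proof is correct and follows exactly the paper's argument: reduce via Lemma~\ref{lemCCbeNonIdempotent} to bounding the arity of a minimal elusive compatible relation, apply Theorem~\ref{thmSponge} to get a sponge with at most $|A|^2$ blocks, then invoke Lemma~\ref{lemUpperBound} for the per-block bound $n_i<|A|m$. The only difference is presentational — you instantiate the argument at the specific arity $n=N-1$ and spell out the degenerate cases and the preservation of minimality under coordinate permutation, whereas the paper proves the bound for every elusive relation at once and treats those points implicitly.
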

  \begin{proof}
    The nontrivial implication is ``$\Rightarrow$.'' Observe that the theorem 
    is true (but not very interesting) when $|A|=1$ or $m=1$. Thus we let $|A|^3m>2$.

    Assume that $\algA$ has a cube term. Using
    Lemma~\ref{lemCCbeNonIdempotent}, it is enough to show that every
    elusive relation compatible with $\algA$ has arity less than $|A|^3m$.

    Let $R$ be an $n$-ary elusive relation compatible with $\algA$. 
    By taking $R$ inclusion minimal and applying Theorem~\ref{thmSponge}, we can assume without loss of generality that $R$ is a sponge of type $(n_1,\dots,n_k)$ where
    $n_1+n_2+\dots+n_k=n$ and $k\leq |A|^2$.

    By Lemma~\ref{lemUpperBound} we have $n_i<|A|m$. Therefore
    \[
      n=\sum_{i=1}^k n_i<k|A|m\leq
    |A|^2|A|m=|A|^3m,
    \]
  giving us $n<|A|^3m$ as was needed.
  \end{proof}

  \section{Deciding cube and near unanimity terms is in \compEXPTIME{}}\label{secGeneralAlg}
 We conclude our paper with two algorithmic corollaries of Theorem~\ref{thmGeneralCTB}.
 \begin{corollary}\label{cor:edge-in-exptime}
  The problem of deciding if a given finite algebra (given by its tables of basic operations) has a cube term is in \compEXPTIME.
 \end{corollary}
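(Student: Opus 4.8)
The plan is to reduce the existence of a cube term to finitely many membership tests in powers of $\algA$, using Theorem~\ref{thmGeneralCTB} to bound the dimension and Lemma~\ref{lemMatrix} to make each fixed dimension effectively checkable. Write $m$ for the maximal arity of a basic operation of $\algA$ and put $N=|A|^3m$. By Theorem~\ref{thmGeneralCTB}, $\algA$ has a cube term if and only if it has a cube term of some dimension $d$ with $1\le d\le N$; and for a fixed $d$, Lemma~\ref{lemMatrix} says this happens if and only if
\[
  \seq{A}\vect a\in\Sg_{\algA^{|A|+d}}\bigl(\{\seq{A}\chi_I(\vect a,\vect b)\colon\emptyset\neq I\subset[d]\}\bigr)\qquad\text{for all }\vect a,\vect b\in A^d.
\]
So the algorithm I would run loops over $d=1,\dots,N$ and over all pairs $\vect a,\vect b\in A^d$, generates the subuniverse on the right using the iterative generation procedure recalled in the Preliminaries (applying the basic operations of $\algA$ coordinatewise, since $\algA^{|A|+d}$ is not handed to us by an explicit table), tests whether $\seq{A}\vect a$ belongs to it, and reports that $\algA$ has a cube term exactly when some $d$ passes all of its pairs. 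Correctness of this procedure is immediate from the two cited results.

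It remains to check that the procedure runs in exponential time in $|\algA|$, and this is the step that needs a little care. For a fixed $d\le N$ the ambient algebra $\algA^{|A|+d}$ has $|A|^{|A|+d}$ elements; generating one subuniverse of it touches each tuple of already-generated elements at most once per basic operation, so its cost is bounded, up to polynomial factors, by $|\algA|\cdot\bigl(|A|^{|A|+d}\bigr)^{m+1}$. For each $d$ we try $|A|^{2d}$ pairs, and $d$ takes at most $N$ values. Now the crucial observation is that $|\algA|$ already contains, for each basic operation of arity $r$, a table with $|A|^r$ entries, so $m\log_2|A|\le\log_2|\algA|$; hence $m\le\log_2|\algA|$ and $N=|A|^3m\le|\algA|^3\log_2|\algA|$, while trivially $|A|\le|\algA|$. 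Substituting these bounds, every quantity above --- in particular the potentially dangerous factor $|A|^{|A|+N}$ coming from the size of the power --- becomes of the form $2^{\mathrm{poly}(|\algA|)}$, and a product of finitely many such quantities is still $2^{\mathrm{poly}(|\algA|)}$. Thus the whole procedure runs in \compEXPTIME.

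The main obstacle here is bookkeeping rather than mathematics: one must be scrupulous that the exponent ends up polynomial in the \emph{full} input size $|\algA|$ (operation tables included) and not merely in $|A|$, since the power $\algA^{|A|+N}$ that we generate inside is itself of exponential size. The inequality $m\le\log_2|\algA|$ is exactly what prevents this from secretly being a second exponential, and I would state it explicitly. Everything else --- the description of the generation routine and the counting of rounds and tuples --- is routine and already present, for $\algA$ itself, in the Preliminaries.
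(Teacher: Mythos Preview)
Your proof is correct and follows the same core strategy as the paper: bound the dimension via Theorem~\ref{thmGeneralCTB} and reduce to subuniverse generation in a bounded power of $\algA$. The paper streamlines in two minor ways. First, it tests only the single dimension $d=N$ (which suffices since a $d$-dimensional cube term yields a $d'$-dimensional one for every $d'\ge d$). Second, rather than ranging over all $\vect a,\vect b\in A^d$, it only considers constant tuples $a^{\vect d},b^{\vect d}$ for $a,b\in A$, bringing the number of pairs down to $|A|^2$; correctness in the failure direction then requires an extra appeal to cube term blockers in the idempotent reduct (Proposition~\ref{propMMMblocker}). Your version trades that extra citation for a larger but still singly-exponential search space, which is harmless for the \compEXPTIME{} claim and makes the argument marginally more self-contained. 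Your explicit handling of the inequality $m\le\log_2|\algA|$ is a nice touch that the paper leaves implicit.
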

 \begin{proof}
  We present an \compEXPTIME{} algorithm for the problem. Let $d=|A|^3m$, where $m$ is the maximum arity of a basic operation of $\algA$.
  For each pair $a, b\in A$, consider the subuniverse $R_{a,b}$ of $\algA^{|A|+m|A|^3}$ generated by 
  $\seq{A}\chi_I(a^{\vect d},b^{\vect d})$ for all $I\neq \emptyset$.

  If $R_{a,b}$ does not contain the tuple $\seq{A}a^{\vect d}$, then
  $\algA$ has no $d$-dimensional cube term, and so no cube term at all by
  Theorem~\ref{thmGeneralCTB}. On the other hand, if $\algA$ has no cube term, then let $\algA_{idmp}$ be the idempotent reduct of $\algA$. We see by Proposition~\ref{propMMMblocker} that $\algA_{idmp}$ has a cube term blocker $(C,D)$. Pick $b\in C$ and $a\in D\setminus C$; from the definition of a blocker it follows that the subuniverse of $\algA_{idmp}^d$ generated by $\chi_I(a^{\vect d},b^{\vect d})$ for $I\neq \emptyset$ does not contain $a^{\vect d}$. Translating this back to $\algA$, we obtain that $R_{a,b}$ does not contain $a^{\vect d}$, finishing the proof of correctness of our algorithm.
  
If $\algA$ has $k$ basic operations of
arity at most $m$, then $|\algA|=O(k|A|^m)$ and
generating each $R_{a,b}$
takes time $O\left(mk\left(|A|^{|A|+m|A|^3}\right)^m\right)$. Assume for
simplicity that $k$ is much less
than $2^{|A|}$. The time estimate for generating $R_{a,b}$ then becomes $2^{O(m^2|A|^3\log{|A|})}$. Since there are only $|A|^2=2^{O(\log{|A|})}$ many possible choices of $a,b$, the total running time of our algorithm will also be
$2^{O(m^2|A|^3\log|A|)}$, placing the problem into the \compEXPTIME{} complexity class.
\end{proof}
The algorithm above can be made a bit faster, but only at the cost of making it
more complicated (by suitably generalizing the notion of cube term blocker). Even the best algorithm known to us still needs exponential time in the worst
case. Since building up the machinery for generalized cube term blockers
would take up several more pages and the complexity theoretic payoff is small, we have decided to present only the
straightforward algorithm here.

As noted in the Introduction, an algebra has a near  unanimity term if and only if it has a cube term and generates a congruence distributive variety. Deciding if a given algebra generates a congruence distributive variety lies in \compEXPTIME{} (in fact, it is \compEXPTIME{}-complete~\cite{freese-valeriote-maltsev-conditions}). Therefore, it follows from Corollary~\ref{cor:edge-in-exptime} that deciding if an algebra has a near unanimity term is in \compEXPTIME.

\begin{corollary}
The problem of deciding if a given finite algebra (given by tables of its basic operations) has a near unanimity term is in \compEXPTIME.
 \end{corollary}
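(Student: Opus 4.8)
The plan is to reduce the problem directly to the two algorithmic ingredients already assembled. Recall from the discussion following Theorem~\ref{thmCDedgeNU} that a finite algebra $\algA$ has a near unanimity term if and only if $\algA$ has a cube term and $\algA$ generates a congruence distributive variety. The arity restriction $k\geq 3$ in Theorem~\ref{thmCDedgeNU} is harmless: a near unanimity operation has arity at least three by definition, and conversely, once $\algA$ has a cube term of some dimension $d$ and generates a congruence distributive variety, Theorem~\ref{thmCDedgeNU} (applied with $k=\max(d,3)$, using that a cube term of dimension $d$ yields one of any larger dimension) produces a near unanimity term. So the characterization is exactly what we need.

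Given this, the algorithm is the obvious one: on input $\algA$, first run the \compEXPTIME{} procedure of Corollary~\ref{cor:edge-in-exptime} to decide whether $\algA$ has a cube term; second, run the algorithm of~\cite{freese-valeriote-maltsev-conditions} (which decides whether $\algA$ generates a congruence distributive variety and runs in time exponential in $|\algA|$, being in fact \compEXPTIME-complete); output ``yes'' precisely when both subroutines return ``yes.'' Correctness is immediate from the characterization above together with the correctness of the two subroutines.

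For the complexity bound, it suffices to observe that the sequential composition of two algorithms each running in time $2^{\mathrm{poly}(|\algA|)}$ again runs in time $2^{\mathrm{poly}(|\algA|)}$, so the combined algorithm witnesses membership of the problem in \compEXPTIME. There is essentially no genuine obstacle here; the only points requiring a moment's care are the small-arity edge cases (dispatched by the remark on $k\geq 3$ above) and the routine bookkeeping that \compEXPTIME{} is closed under running two exponential-time subroutines in succession and combining their answers by a conjunction.
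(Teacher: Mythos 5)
Your proof is correct and follows essentially the same route as the paper: decompose the near-unanimity question into ``has a cube term'' (Corollary~\ref{cor:edge-in-exptime}) plus ``generates a congruence distributive variety'' (the \compEXPTIME{} algorithm of~\cite{freese-valeriote-maltsev-conditions}), and combine. Your extra remark about the $k\geq 3$ restriction in Theorem~\ref{thmCDedgeNU} is a harmless bit of added care that the paper leaves implicit.
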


\section*{Conclusions}
We have shown that there are strong conditions that limit the way in which a
finite algebra with finitely many basic operations
can have a cube term. However, there are still open problems associated
with this topic.

First of all, given a signature and a finite algebra $\algA$ of this signature with a cube term, what is the minimal dimension of a cube term in the clone of $\algA$? The construction for the
idempotent case gives us a tight lower bound on the minimal dimension of cube terms.
It is possible to do some minor improvements upon the $m|A|^3$ upper
bound we have presented, but it remains open if there are large algebras with
minimal cube term dimension $\Omega(m|A|^3)$.

It is often the case that various Maltsev conditions that are polynomial time
decidable for idempotent algebras turn out to be hard for general algebras,
see~\cite{freese-valeriote-maltsev-conditions}. We conjecture that one can not
do much better than the algorithm presented in Section~\ref{secGeneralAlg}, i.e.
that the problem of deciding whether a given algebra has a cube term is, like the decision problems for many other Maltsev conditions, \compEXPTIME-complete.

\bibliographystyle{spmpsci}
\bibliography{citations}
\end{document}